\theoremstyle{plain}
\newtheorem{prop}{Proposition}
\newtheorem{theo}[prop]{Theorem}
\newtheorem{coro}[prop]{Corollary}
\newtheorem{lemm}[prop]{Lemma}
\newtheorem*{propo}{Proposition}  
\newtheorem*{exam}{Example}  
\theoremstyle{definition}
\newtheorem{rema}[prop]{Remark}
\newtheorem*{remar}{Remark}
\newcommand{\ra}{\rightarrow}
\newcommand{\C}{{\mathbb C}}
\newcommand{\Q}{{\mathbb Q}}
\newcommand{\cC}{{\mathcal C}}
\newcommand{\cE}{{\mathcal E}}
\newcommand{\cF}{{\mathcal F}}
\newcommand{\cK}{{\mathcal K}}
\newcommand{\cM}{{\mathcal M}}
\newcommand{\cO}{{\mathcal O}}
\newcommand{\cP}{{\mathcal P}}
\newcommand{\cT}{{\mathcal T}}
\newcommand{\oM}{{\overline{M}}}
\newcommand{\ocM}{{\overline{\mathcal M}}}
\newcommand{\ocP}{{\overline{\mathcal P}}}
\newcommand{\fS}{{\mathfrak S}}
\newcommand{\bP}{{\mathbb P}}
\newcommand{\bQ}{{\mathbb Q}}
\newcommand{\bZ}{{\mathbb Z}}
\newcommand{\Bl}{\mathrm{Bl}}
\newcommand{\Gr}{\mathrm{Gr}}
\newcommand{\Fl}{\mathrm{Fl}}
\newcommand{\OGr}{\mathrm{OG}}
\newcommand{\OG}{\mathrm{OG}}
\newcommand{\ev}{\mathrm{ev}}
\newcommand{\pt}{\mathrm{pt}}
\newcommand{\rank}{\mathrm{rank}}
\author{Brendan Hassett}
\address{Department of Mathematics\\
Rice University, MS 136 \\
Houston, TX  77251-1892 \\
USA}
\email{hassett@rice.edu}
\author{Yuri Tschinkel}
\address{Courant Institute\\
                New York University \\
                New York, NY 10012 \\
                USA }
\email{tschinkel@cims.nyu.edu}
\address{
Simons Foundation \\
160 Fifth Avenue \\
New York, NY 10010 \\
USA}
\title{Embedding pointed curves in K3 surfaces}
\begin{document} 
\date{\today}

\maketitle

\section{Introduction}

Let $D$ be a smooth projective curve of genus $g$ over an algebraically closed
field of characteristic zero.
Let $\cM_g$ denote the moduli stack of such curves.
Let $(S,h)$ be a polarized K3 surface of genus $g$, i.e., $h$ is ample
and primitive with $h^2=2g-2$.  Let $\cF_g$
denote the moduli stack of such surfaces.
Now suppose $D \subset S$ with $[D]=h$; let $\cP_g$ denote the moduli space of
such pairs $(S,D)$, $\varphi_g:\cP_g \ra \cM_g$ the forgetful map, and
$\cK_g \subset \cM_g$ its image.
These morphisms have been studied systematically by Mukai; we review
this in more detail in Section~\ref{S:GB}.  One of the highlights of this theory
is the {\em birationality} of $\varphi_g$ when $g=11$.  

We propose variations
on this construction.  Let $\cF_{\Lambda}$ denote a moduli space of lattice polarized 
K3 surfaces, where $\Lambda\supset \left<h,R\right>$ with $h$ a polarization of degree $2g-2$
and $R$ an indecomposable $(-2)$-class (smooth rational curve) with $n:=h\cdot R>0$.  Let $\cP_{\Lambda}$
denote the space of pairs $(S,D)$, where $S\in \cF_{\Lambda}$ and $D\in |h|$ is
smooth and meets $R$ transversally.  
Thus we have 
$$\begin{array}{rcl}
\varphi_{\Lambda}: \cP_{\Lambda} & \ra & \cM_{g;n}:=\cM_{g,n}/\mathfrak{S}_n \\
          (S,D) & \mapsto & (D;D\cap R)
\end{array}
$$
as well as the morphism keeping track of the pointed rational curve
$$\begin{array}{rcl}
\widehat{\varphi_{\Lambda}}: \cP_{\Lambda} & \ra & (\cM_{g,n}\times \cM_{0,n})/\mathfrak{S}_n \\
          (S,D) & \mapsto & ((D;D\cap R),(R;D\cap R))
\end{array}
$$
Suppose that $\Lambda=\left<h,R\right>$ and the source and target moduli
spaces have the same dimension; thus $2g+n=21$ in the first case
and $g+n=12$ in the second.  What are $\deg(\varphi_{\Lambda})$
and $\deg(\widehat{\varphi_{\Lambda}})$?
We answer these questions for $g=7$; see Remark~\ref{rema:71} 
and Section~\ref{sect:related}.

Work of Green and Lazarsfeld \cite{GL} shows how lattice polarizations control
the Brill-Noether properties of curves on K3 surfaces.  For example, a smooth
curve $D \subset S$ admits a $g^1_d$ with $2d\le g+1$ only if there is an elliptic
fibration $S\ra \bP^1$ with $D$ as a multisection of degree $d$.  
Hence for suitable lattice polarizations $\varphi_{\Lambda}$ maps to Brill-Noether
strata of $\cM_g$.  
Our variation is relevant to understanding the specialization
of Mukai's theory over these strata.

In this note, we focus on specific lattices with $g=7$ and $11$;
see Theorems~\ref{theo:main7} and \ref{theo:specialize}.  
The approach is to specialize Mukai's construction in genus $11$
to the pentagonal locus, degenerate this to a carefully chosen
stable curve of genus $11$, and then deform a related stable curve
of genus $7$ together with the ambient K3 surface.

Our motivation for considering this geometry comes from arithmetic.  
The constructions in this paper have strong implications for the 
structure of spaces of sections for del Pezzo surface fibrations 
over $\bP^1$.  Details appear in \cite{HT12}.  

\subsection*{Acknowledgments}  Andrew Kresch provided invaluable assistance on this
project and in particular, the computations in the enclosed appendix. 
We benefited from conversations with Shigeru Mukai, Frank-Olaf Schreyer, and
Alessandro Verra.  
The first author is supported by NSF grants
0968349, 0901645, and 1148609; the second author is supported by NSF grants 0739380, 0968349, and 1160859.

\section{Geometric background}
\label{S:GB}
Consider the moduli space 
$M_{2,h,s}(S)$ of rank-two simple sheaves $E$ on a K3 surface $S$ with $c_1(E)=h$ and 
$\chi(E)=2+s$.  This is holomorphic symplectic of dimension $h^2-4s+2=2g-4s$,
provided this expression is non-negative.  When $g-1=2s$ the moduli
space is again a K3 surface, isogenous to $S$ \cite{Mukai87}.  

Let $M_{2,\omega}(D)$ denote the moduli space of semistable rank-two 
vector bundles on $D$ with canonical determinant.  Consider the non-abelian
Brill-Noether loci defined by Mukai \cite{Mukai01}
$$M_{2,\omega,s}(D):=\{E \in M_{2,\omega}(D): h^0(E) \ge 2+s \},$$
which has expected dimension $3g-3-\binom{s+3}{2}.$

Restricting bundles from $S$ to $D$
yields examples of non-abelian Brill-Noether loci with dimensions
frequently larger than expected.  Mukai has developed a program, supported
by beautiful examples, that seeks to characterize $\cK_g\subset \cM_g$ 
in terms of special non-abelian Brill-Noether loci.

A particularly striking result along these lines is:
\begin{theo} \cite[Thm.~1]{Mukai96}
Let $D$ be a generic curve of genus eleven.  Then there exist a genus eleven K3 surface $(S,h)$ and
an embedding $D\hookrightarrow S$, which are unique up to isomorphisms.  Furthermore, we can 
characterize $S$ as $M_{2,h,5}(T)$, where $T=M_{2,\omega,5}(D)$, which is also a genus eleven
K3 surface.  
\end{theo}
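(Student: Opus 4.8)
The plan is to deduce everything from the \emph{birationality} of the forgetful map $\varphi_{11}\colon\cP_{11}\to\cM_{11}$, and then to make the inverse rational map explicit using non-abelian Brill--Noether loci. First note the dimension count: $\dim\cP_{11}=19+\dim|h|=19+11=30=\dim\cM_{11}$, so it suffices to prove that $\varphi_{11}$ is dominant and generically one-to-one with reduced generic fibre. Dominance ($\cK_{11}=\cM_{11}$) I would obtain by exhibiting one pair $(S,D)$ coming from an explicit projective model of a genus eleven K3 surface and checking that the differential of $\varphi_{11}$ there is surjective --- equivalently that the first-order deformations of $(S,D)$ surject onto $H^1(D,T_D)$, which amounts to a vanishing in the exact sequence relating $H^\bullet(S,T_S)$, $H^\bullet(D,N_{D/S})$ and $H^\bullet(D,T_D)$. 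This step is essential but routine.

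The heart of the argument is the \textbf{restriction isomorphism}. For generic $(S,D)\in\cP_{11}$ consider
$$r\colon M_{2,h,5}(S)\lra M_{2,\omega,5}(D),\qquad E\longmapsto E|_D.$$
By adjunction $\det(E)|_D=\cO_S(h)|_D=\omega_D$, so $E|_D$ has canonical determinant; from the exact sequence $0\to E(-h)\to E\to E|_D\to 0$ and the vanishing $H^0(S,E(-h))=0$ --- which holds since $E(-h)$ is $\mu$-stable of negative slope --- one gets $h^0(E|_D)\ge h^0(E)\ge\chi(E)=2+5=7$, and $E|_D$ is semistable by a restriction argument (the class $h$ is ample and $D\in|h|$). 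Injectivity of $r$ and surjectivity of its differential both reduce, via the analogous sequence for $\cH om(E,E')$, to computing $H^\bullet\bigl(S,E^\vee\otimes E'(-h)\bigr)$ for $E,E'\in M_{2,h,5}(S)$; the outcome is that $r$ is an open immersion onto a smooth surface. As recalled above \cite{Mukai87}, the source $M_{2,h,5}(S)$ is a K3 surface --- here $g=11$, $s=5$, so $2g-4s=2$ and $g-1=2s$ --- while the target $M_{2,\omega,5}(D)$ has expected dimension $3g-3-\binom{s+3}{2}=2$; hence $r$ is onto. Set $T:=M_{2,\omega,5}(D)\cong M_{2,h,5}(S)$: it is a genus eleven K3 surface built out of $D$ alone.

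Next comes \textbf{reconstruction}. The universal sheaf on $S\times T$ defines a Fourier--Mukai equivalence $D^b(S)\simeq D^b(T)$; under it the skyscraper sheaves $\cO_x$, $x\in S$, correspond to the rank-two bundles on $T$ with the Mukai vector cutting out $M_{2,h_T,5}(T)$ (where $h_T$ is the induced genus eleven polarization of $T$), so that $S\cong M_{2,h_T,5}(T)$ --- the reflexivity of Mukai's construction in the range $g-1=2s$ \cite{Mukai87}. Combining with the previous paragraph, $S\cong M_{2,h_T,5}\!\bigl(M_{2,\omega,5}(D)\bigr)$ is determined by $D$ up to isomorphism, giving uniqueness of $S$. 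The embedding is recovered dually: restricting a universal bundle on $D\times T$ to $\{x\}\times T$ attaches to each $x\in D$ a rank-two bundle on $T$ with $h^0=7$, hence a point of $M_{2,h_T,5}(T)=S$, and one checks the resulting morphism $D\to S$ is a closed immersion whose image has class $h$. Since a generic genus eleven K3 surface has trivial automorphism group, any two such embeddings are isomorphic, so the fibre of $\varphi_{11}$ over $[D]$ is a single reduced point, $\varphi_{11}$ is birational, and existence, uniqueness and the Brill--Noether description all follow.

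The \textbf{main obstacle} is the surjectivity-and-smoothness half of the restriction isomorphism. A priori the non-abelian Brill--Noether locus $M_{2,\omega,5}(D)$ may be strictly larger than its expected dimension and singular, and there is a genuine circularity in invoking $S$ to control $M_{2,\omega,5}(D)$ while wanting the latter to reconstruct $S$. Breaking it requires the full strength of Mukai's theory of sheaves on K3 surfaces --- in particular the smoothness and K3 structure of $M_{2,h,5}(S)$ and the reflexivity above --- together with a delicate vanishing analysis of the $\Ext$ groups on $S$ forcing $r$ to be \'etale of degree one onto the entire locus.
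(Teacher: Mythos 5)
First, a point of order: the paper does not prove this statement at all --- it is quoted verbatim from Mukai (\cite[Thm.~1]{Mukai96}) as background for the constructions in genus $7$ and $11$, so there is no ``paper's own proof'' to measure you against; what you have written must stand on its own as a reconstruction of Mukai's argument. As such it is a reasonable outline of the known strategy (restrict bundles from $S$ to $D$, identify $M_{2,h,5}(S)$ with $M_{2,\omega,5}(D)$, and recover $S$ by the reflexivity $S\simeq M_{2,h_T,5}(T)$), but it is not a proof, and the gap is exactly where you flag it. The step ``$r$ is an open immersion onto a smooth surface, the target has expected dimension $2$, hence $r$ is onto'' does not follow: a non-abelian Brill--Noether locus can have components of dimension larger than expected, can be singular or non-reduced, and an open immersion from a surface into it need not dominate every (or any given) component. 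The real content of Mukai's theorem is precisely the statement that for a Brill--Noether general curve $D$ of genus eleven, $M_{2,\omega,5}(D)$ is a smooth K3 surface and every stable bundle on $D$ with canonical determinant and $h^0\ge 7$ extends to a rank-two bundle on any K3 extension $S$ of $D$; this surjectivity/extension statement is nowhere established in your sketch, and without it the uniqueness argument collapses. Concretely, if $D$ lay on two K3 surfaces $S$ and $S'$, your argument only produces two open subsets of $M_{2,\omega,5}(D)$ isomorphic to $S$ and $S'$ respectively, and no isomorphism $S\simeq S'$ follows unless both restriction maps are shown to hit the same (two-dimensional, irreducible) locus --- which is the theorem, not a consequence of the set-up. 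There is also a quantifier issue: genericity of the pair $(S,D)$ in $\cP_{11}$ is not the same as the needed statement for a generic $D$ and \emph{every} K3 surface through it, so the ``delicate vanishing analysis'' you defer must be carried out uniformly over the whole fibre of $\varphi_{11}$, not at a generic point of $\cP_{11}$.

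The remaining pieces are acceptable in outline: the dimension count $\dim\cP_{11}=19+11=30=\dim\cM_{11}$, dominance via surjectivity of the differential at one explicit pair (equivalently the vanishing of $H^1(T_S(-D))$-type obstructions in the log/normal bundle sequence), the identification of $M_{2,h,5}(S)$ as a K3 since $g-1=2s$ \cite{Mukai87}, and the Fourier--Mukai reflexivity. But as you yourself concede in the final paragraph, the circle is not broken: you invoke $S$ to control $M_{2,\omega,5}(D)$ while needing $M_{2,\omega,5}(D)$, intrinsically attached to $D$, to pin down $S$. Until the smoothness, irreducibility, and correct dimension of $M_{2,\omega,5}(D)$ for generic $D$ --- together with the extension of every such bundle across $D\subset S$ --- are actually proved, the proposal is a plan for a proof rather than a proof.
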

Thus $\varphi_{11}: \cP_{11} \ra \cM_{11}$ is birational and the K3 surface can be recovered
via moduli spaces of vector bundles on $D$.  
The theorem remains true for the generic hexagonal curves of genus eleven; see \cite[Thm.~3]{Mukai96}.
Hexagonal curves form a divisor in $\cM_{11}$, so $\varphi_{11}$ remains an isomorphism
over the generic point of the hexagonal locus.  We will analyze what happens over the
pentagonal locus. 

We shall need a similar result along these lines in genus seven \cite{MukaiAJM}.  
Let $\OGr(5,10) \subset \bP^{15}$ denote the orthogonal Grassmannian, 
parametrizing five dimensional isotropic subspaces for a ten dimensional
non-degenerate quadratic form.  
Let $G$ denote the corresponding orthogonal group.
The intersection of $\OGr(5,10)$ with a
generic six dimensional subspace $\Pi_6 \subset \bP^{15}$ is a canonical
curve of genus seven; its intersection with a seven dimensional subspace
$\Pi_7 \subset \bP^{15}$ is a K3 surface of genus seven.  
Thus we obtain rational maps
$$\begin{array}{rcl}
\Gr(7,16)/G & \dashrightarrow &\cM_7 \\
\Gr(8,16)/G & \dashrightarrow & \cF_7 \\
\Fl(7,8,16)/G & \dashrightarrow & \cP_7 
\end{array}$$
where $\Fl(7,8,16) \subset \Gr(7,16) \times \Gr(8,16)$ is the flag variety.  
Mukai shows that each of these is birational.

\section{Degeneration of Mukai's construction over the pentagonal locus}
\label{S:degenerate}

Let $V\subset \bP^6$ denote the Fano threefold of index two obtained as a generic linear section
of the Grassmannian $\Gr(2,5)$.  
\begin{prop} \label{prop:toGr}
Let $D$ be a pentagonal curve of genus eleven, i.e., $D$ admits a basepoint free $g^1_5$.
Assume $D$ is generic.  
Consider
$$D \hookrightarrow \bP^6,$$
the embedding induced by the linear series adjoint to the $g^1_5$.
This admits a canonical factorization
$$D \subset V \subset \bP^6.$$
\end{prop}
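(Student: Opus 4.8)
The plan is to analyze the geometry of the adjoint embedding directly via Riemann–Roch and the behavior of the $g^1_5$. Let $A$ denote the line bundle of degree $5$ defining the pentagonal pencil, so $h^0(A) = 2$, and set $L := \omega_D \otimes A^{-1}$, a line bundle of degree $2g-2-5 = 15$ on the genus $11$ curve $D$. By Riemann–Roch, $h^0(L) = h^0(A) + \deg L + 1 - g = 2 + 15 + 1 - 11 = 7$, provided $h^1(L) = h^0(A) = 2$, which holds here since $h^1(L) = h^0(\omega_D \otimes L^{-1}) = h^0(A)$. So $L$ is the adjoint series to the $g^1_5$ and induces a map $\phi_L : D \to \bP^6$; for generic $D$ one checks $L$ is very ample (degree $15 \gg 2g+1$ fails, so one argues instead using the base-point-free pencil trick / the genericity of $D$ together with the fact that $L$ separates points and tangents away from fibers of $A$, and the adjoint construction handles the fibers). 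Thus $D$ embeds in $\bP^6$.

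Next I would produce the factorization through $V$. The key structural fact is that the pencil $|A|$, via the map $D \to \bP^1$, organizes $D$ into a family of degree-$5$ divisors $D_t = A^{-1}(t)$, each a set of $5$ points (for generic $t$) spanning at most a $\bP^3$ inside $\bP^6$ under $\phi_L$; indeed, for a divisor $D_t \in |A|$, the points of $\phi_L(D_t)$ span the linear space $\bP(H^0(L)^\vee) \supset \overline{\phi_L(D_t)}$ cut out by $H^0(L - D_t) = H^0(\omega_D - 2A)$, and $h^0(\omega_D - 2A) = h^0(L) - 5 + h^1(L-D_t)$; by Riemann–Roch on $\omega_D - 2A$ (degree $10$), $h^0(\omega_D - 2A) = 10 + 1 - 11 + h^1 = h^1(\omega_D-2A) = h^0(2A)$. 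For generic pentagonal $D$ the series $2A$ is a $g^2_{10}$ (the pencil $A$ does not compose with a map to a higher-genus curve), so $h^0(\omega_D - 2A) = 3$, meaning $\phi_L(D_t)$ spans exactly a $\bP^3$. So we get a one-parameter family of $3$-planes in $\bP^6$ sweeping out a threefold $V$ containing $D$.

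The remaining and most substantial step is to identify this threefold $V$ with the Fano threefold of index two that is a linear section of $\Gr(2,5) \subset \bP^9$. Here I would match invariants: $V$ is swept by a pencil of $3$-planes, so it is a rational threefold of low degree in $\bP^6$; computing its degree (via the degree of $L$ and the geometry of the family of $3$-planes, or by intersecting with a general $\bP^3$) should give $\deg V = 5$, and one checks $V$ is not contained in a hyperplane (since $\phi_L$ is nondegenerate) and has the expected Hilbert polynomial. A nondegenerate irreducible threefold of degree $5$ in $\bP^6$ that is not a cone is, by the classification of varieties of minimal degree plus one (del Pezzo / Fano threefolds of index $2$ and degree $5$), precisely the quintic del Pezzo threefold $V_5 = \Gr(2,5) \cap \bP^6$; the anticanonical / index-two structure is forced. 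The canonicity of the factorization follows because $L = \omega_D \otimes A^{-1}$ is canonically determined by $D$ and its (unique, for generic $D$) $g^1_5$, so the embedding $D \hookrightarrow \bP^6 = \bP(H^0(L)^\vee)$ and hence $V$ depend only on $(D, A)$.

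I expect the main obstacle to be the last step: rigorously pinning down that the swept threefold is exactly $V_5$ rather than some degeneration or a variety of larger dimension/degree, and verifying the very ampleness of $L$ for generic pentagonal $D$. Both points rely essentially on the genericity hypothesis — concretely, that for generic $D$ the $g^1_5$ is unique and base-point free, that $2A$ is not special in an unexpected way, and that $D$ does not lie on any lower-degree threefold — so the proof should isolate these as the genericity conditions being invoked, and then appeal to the degree-$5$ threefold classification to conclude $V \cong V_5$ and that the linear-section-of-$\Gr(2,5)$ description holds.
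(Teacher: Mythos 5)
Your first step (Riemann--Roch giving $h^0(\omega_D-A)=7$, and each divisor $D_t\in|A|$ spanning a $\bP^3$ in $\bP^6$ because $h^0(\omega_D-2A)=3$ for generic $D$) is correct and matches the scrollar set-up in the paper. But the next step fails: a nonconstant one-parameter family of $3$-planes in $\bP^6$ sweeps out a \emph{fourfold}, not a threefold (project from a general point of a member; distinct $3$-planes cannot union to dimension $3$ unless the family is constant, which is excluded since $D$ is nondegenerate). Indeed, what this family sweeps out is exactly the paper's scroll, the image of $\bP(\cO_{\bP^1}\oplus\cO_{\bP^1}(-1)^{\oplus 3})$, i.e.\ the cone over the Segre threefold $\bP^1\times\bP^2$ --- a fourfold of degree $3$. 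Moreover the quintic del Pezzo threefold $V=\Gr(2,5)\cap\bP^6$ contains no planes at all, so it can never be obtained as a union of the $\bP^3$'s spanned by the $g^1_5$-divisors. Consequently your fallback, ``compute the degree of the swept variety and invoke the classification of degree-$5$ threefolds in $\bP^6$,'' has nothing to apply to: the genuinely hard content of the proposition is to exhibit \emph{some} threefold of degree $5$ (equivalently, enough quadrics) through the adjoint curve, and your argument never produces one.

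The missing idea, which is the heart of the paper's proof, is vector-bundle-theoretic rather than span-theoretic: after factoring the adjoint map through the cone over $\bP^1\times\bP^2$ (using the genericity hypothesis that $(\phi_1)_*\omega_D=\omega_{\bP^1}\oplus\cO_{\bP^1}(1)\oplus\cO_{\bP^1}(2)^{\oplus 3}$, i.e.\ the scroll is balanced --- this is exactly where genericity enters, cf.\ Remark~\ref{rema:noMaroni}), one restricts to $D$ a suitable twist $Q'$ of the tautological quotient bundle of the scroll. This $Q'|_D$ is a globally generated rank-$3$ bundle with $h^0=5$ and $c_1(Q'|_D)=K_D-g^1_5$; its classifying map sends $D$ to $\Gr(2,5)\subset\bP^9$ so that the Pl\"ucker line bundle pulls back to the adjoint series, whence $D$ lands in a codimension-three linear section of $\Gr(2,5)$, which is $V$. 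Some construction of this kind (a rank-$3$ quotient of a fixed $5$-dimensional space of sections, or an equivalent Mukai-style argument producing the quadrics through $D$) is indispensable; without it the factorization through $V$, and in particular its canonicity, is not established.
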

\begin{proof}
Let $\phi_1:D \ra \bP^1$ be the degree five morphism.
Write $(\phi_1)_*\cO_D=\cO_{\bP^1} \oplus \cF$ using the trace homomorphism;
by relative duality, we have
$$(\phi_1)_*\omega_D= \omega_{\bP^1} \oplus \cF^{\vee}\otimes \omega_{\bP^1}$$
and
$$\Gamma(D,\omega_D)=\Gamma(\bP^1, \omega_{\bP^1} \oplus \cF^{\vee} \otimes \omega_{\bP^1}).$$
Moreover $\cF^{\vee} \otimes \omega_{\bP^1}$ is globally generated, as the divisors in the
basepoint free $g^1_5$ impose four conditions on the canonical series.
Hence the canonical embedding factors
$$D \hookrightarrow \bP(\cF \otimes T_{\bP^1}) \rightarrow \bP^{10}.$$

Our genericity assumption is that the vector bundles above are as `balanced' as possible, i.e.,
$$\cE:=\cF \otimes T_{\bP^1}\simeq \cO_{\bP^1}(-1) \oplus \cO_{\bP^1}(-2)^{\oplus 3}.$$
Since $\cE^{\vee} \otimes \cO_{\bP^1}(-1)$ is still globally generated, 
we also have a factorization of the adjoint morphism
$$D \hookrightarrow \bP(\cE \otimes \cO_{\bP^1}(1)) \ra \bP^6;$$
the image of the projective bundle is a cone over the Segre threefold $\bP^1 \times \bP^2$.  

The tautological exact sequence on $p:\bP(\cE) \ra \bP^1$ takes the form
$$0 \ra \cO_{\bP(\cE)}(-1) \ra p^*\cE \ra Q \ra 0;$$
twisting yields
$$0 \ra \cO_{\bP(\cE)}(-1) \otimes p^*\cO_{\bP^1}(2) \ra p^*(\cE \otimes \cO_{\bP^1}(2)) \ra Q':=Q\otimes p^*\cO_{\bP^1}(2) \ra 0.$$
Note that $\rank(Q')=3$, $h^0(Q')=5$, $Q'$ is globally generated, and
$c_1(Q')=c_1(\cO_{\bP(\cE)}(1))-p^*c_1(\cO_{\bP^1}(1)).$
Restricting to $D\subset \bP(\cE)$ gives 
$$c_1(Q'|D)=K_D-g^1_5,$$
the adjoint divisor.  The classifying map for $Q'|D$ gives a morphism
$$D \ra \Gr(2,5)$$
factoring through a codimension three linear section, which is $V$.
\end{proof}

\begin{rema} \label{rema:noMaroni}
We isolate where the generality assumption is used:  It is
necessary that $\cE$ not admit any summands of degree $\leq -3$, or equivalently,
$$(\phi_1)_*\omega_D=\omega_{\bP^1} \oplus \cO_{\bP^1}(1) \oplus \cO_{\bP^1}(2)^{\oplus 3}.$$
\end{rema}

\begin{theo} \label{theo:pentagonal}
Let $D$ be a generic pentagonal curve of genus eleven and
$(\phi_1,\phi_2):D \ra \bP^1 \times V$ the embedding given by
the degree five covering and Proposition~\ref{prop:toGr}.
We have
\begin{itemize}
\item{$\varphi_{11}^{-1}(D) \simeq \bP^2$, specifically,
the K3 surfaces containing $D$ are the codimension-two linear sections of 
$\bP^1 \times V$ containing $D$.}
\item{Fix two distinct points $d_1,d_2 \in D$ with $\phi_1(d_1)=\phi_1(d_2)=:p$;
given a conic $Z$ satisfying 
$$d_1,d_2 \in Z \subset \phi_1^{-1}(p),$$
there exists a unique K3 surface $S$ containing $D$ and $Z$.}
\end{itemize}
\end{theo}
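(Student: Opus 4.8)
The plan is to prove that \emph{every} K3 surface $S$ with $D\subset S$ and $[D]=h$ is, thanks to the pentagonal structure, a codimension-two linear section of $\bP^1\times V$, and then to read off both statements from the linear algebra of linear spans inside the embedding $\bP^1\times V\subset\bP^{13}$ given by $\cO_{\bP^1}(1)\boxtimes\cO_V(1)$. The easy converse half is immediate: a codimension-two linear section of the fourfold $\bP^1\times V$ has degree $(\cO_{\bP^1}(1)\boxtimes\cO_V(1))^4=20$ and, by adjunction, trivial canonical class, so (when irreducible, with at worst rational double points) it is a K3 surface with a degree-$20$ polarization, hence of genus eleven; if it contains $D$ then $D$ is a hyperplane section and $[D]=h$. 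The content is that these are \emph{all} the K3 surfaces through $D$, and that there is a $\bP^2$ of them.

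First I would produce the elliptic pencil. A generic pentagonal curve of genus eleven has gonality exactly five and Clifford index $3<\lfloor(g-1)/2\rfloor=5$, so by Green--Lazarsfeld \cite{GL} there is a class $E\in\Pic(S)$ with $E^2=0$, $h^0(E)=2$, and $E|_D=g^1_5$; then $\langle h,E\rangle\subseteq\Pic(S)$ has Gram matrix $\left(\begin{smallmatrix}20&5\\5&0\end{smallmatrix}\right)$, which represents no $-2$. Thus $|E|$ is an elliptic fibration $\pi\colon S\to\bP^1$ restricting to $\phi_1$ on $D$, while $h-E=E+(h-2E)$ is nef and big, ample when $\Pic(S)=\langle h,E\rangle$, with $(h-E)^2=10$ and, by Riemann--Roch and vanishing, $h^0(h-E)=7$. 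The restriction $H^0(S,h-E)\to H^0(D,(h-E)|_D)=H^0(D,K_D-g^1_5)$ is an isomorphism, since both sides have dimension $7$ and its kernel is $H^0(S,-E)=0$; hence $\psi:=\Phi_{|h-E|}\colon S\to\bP^6$ restricts on $D$ to the adjoint morphism $\phi_2$ of Proposition~\ref{prop:toGr}. Consequently $(\pi,\psi)\colon S\to\bP^1\times\bP^6$ pulls $\cO_{\bP^1}(1)\boxtimes\cO_{\bP^6}(1)$ back to $E+(h-E)=h$, so it is $\Phi_{|h|}$ composed with the Segre map, and its image spans a $\bP^{11}$.

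The crux — and the step I expect to be the main obstacle — is that $\psi(S)\subset V$. The surface $\psi(S)$ is a K3 surface of genus six containing $\psi(D)=\phi_2(D)$, which by Proposition~\ref{prop:toGr} spans $\bP^6$ and lies on $V$; since $V$ is a quintic del Pezzo threefold $\Gr(2,5)\cap\bP^6$ and every genus-six K3 surface lies on such a threefold, what must be checked is that it lies on \emph{this} one. I would force this by lifting the classifying data of Proposition~\ref{prop:toGr} to $S$: construct a rank-two bundle $\cQ$ on $S$ with $\wedge^2\cQ=h-E$ and $h^0(\cQ)=5$ by the Serre construction from a general length-four subscheme satisfying Cayley--Bacharach, check that $\cQ$ is globally generated and that $\wedge^2H^0(\cQ)\to H^0(h-E)$ is surjective, so that $\psi$ factors as $S\to\Gr(2,5)\hookrightarrow\bP^9$ followed by the projection to the $\bP^6$ spanned by the Pl\"ucker image; restricting $\cQ$ to $D$ reproduces the construction of Proposition~\ref{prop:toGr}, pinning down the $\bP^6$ and the linear section $\Gr(2,5)\cap\bP^6$ as the ones there, so $\psi(S)\subset V$. (The canonicity asserted in Proposition~\ref{prop:toGr} should alternatively identify $V$ as the unique quintic del Pezzo threefold in $\bP^6$ through $\phi_2(D)$, which already gives $\psi(S)\subset V$.) Granting this, $(\pi,\psi)$ maps $S$ into $\bP^1\times V$; since $\deg(\bP^1\times V)=20=\deg S$ in $\bP^{13}$ and a codimension-two linear section of $\bP^1\times V$ is purely two-dimensional, the $\bP^{11}$ spanned by $S$ meets $\bP^1\times V$ exactly in $S$, so $S$ is a codimension-two linear section. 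With the easy half this gives the first bullet; and since the canonical curve $D$ spans $\langle D\rangle\cong\bP^{10}\subset\bP^{13}$, the linear sections through $D$ are cut by the $\bP^{11}$'s containing $\langle D\rangle$, a family $\cong\bP((\bC^{14}/\bC^{11})^\vee)\cong\bP^2$, which identifies $\varphi_{11}^{-1}(D)$ with $\bP^2$.

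For the second bullet, the first part shows that $D$ is a hyperplane section of every $S\in\varphi_{11}^{-1}(D)$, so $\langle D\rangle\cap(\bP^1\times V)=\langle D\rangle\cap S=D$; in particular $\bP^1\times V$ contains no conic lying in $\langle D\rangle$, since the irreducible genus-eleven curve $D$ contains no conic. Hence, for a conic $Z$ in the fiber $\{p\}\times V$ of $\bP^1\times V\to\bP^1$ through the distinct points $d_1,d_2$ of $D$ (with $\phi_1(d_1)=\phi_1(d_2)=p$, so their images in $\bP^{13}$ are distinct), we have $\langle Z\rangle\not\subseteq\langle D\rangle$, so $\langle D\rangle\cap\langle Z\rangle$, which contains the line through $d_1,d_2$, equals that line, and therefore $\langle D,Z\rangle$ is a $\bP^{11}$. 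It is then the unique $\bP^{11}$ containing both $\langle D\rangle$ and $Z$, and the corresponding codimension-two linear section $\langle D,Z\rangle\cap(\bP^1\times V)$ is the unique K3 surface in $\varphi_{11}^{-1}(D)$ containing $Z$. All the remaining ingredients — Riemann--Roch and very-ampleness criteria on K3 surfaces and the incidence computations for linear spaces — are routine, and the real work is concentrated in the identification $\psi(S)\subset V$.
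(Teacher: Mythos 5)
Your framework for the first bullet is the same as the paper's: realize everything inside $\bP^1\times V\subset\bP^{13}$ and identify $\varphi_{11}^{-1}(D)$ with the $\bP^{11}$'s containing the $\bP^{10}$ spanned by $D$. The paper essentially presents this correspondence; you attempt in addition to prove the converse inclusion (that \emph{every} K3 surface through $D$ arises as such a linear section), which is the right thing to worry about, but your crux step $\psi(S)\subset V$ is only a sketch resting on several unestablished claims: that $h-E$ behaves well (is base-point free, with the expected $h^0$, giving a morphism to $\bP^6$) for \emph{every} $S$ in the fiber, including those of higher Picard rank such as the surfaces containing $Z$ in the second bullet, where you cannot assume $\Pic(S)=\langle h,E\rangle$; that the Serre-construction bundle $\cQ$ exists, is globally generated and suitably unique on such possibly non-Brill--Noether-general $(S,h-E)$; and, most seriously, that $\cQ|_D$ coincides with the bundle of Proposition~\ref{prop:toGr} --- a non-abelian Brill--Noether uniqueness statement on $D$ that you assert rather than prove (your fallback, uniqueness of the quintic del Pezzo threefold through $\phi_2(D)$, is likewise unproved, and in any case needs $\psi(S)$ to lie on \emph{some} such threefold). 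You also never verify that the linear sections in question are honest K3 surfaces: the paper does this via the regularity-along-$D$ observation plus a parameter count, and without some such argument neither the identification of the fiber with $\bP^2$ nor the existence half of the second bullet is complete.

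For the second bullet your route is genuinely different. Given $Z$, you show $\mathrm{span}(D\cup Z)$ is the unique $\bP^{11}$ containing $D$ and $Z$, which (granting the first bullet) yields uniqueness of $S$ --- a clean linear-algebra argument for the statement as literally worded. The paper instead proves uniqueness by solving an enumerative problem: there is exactly \emph{one} conic through two general points of $V$ (transferring to the quadric threefold by projection from a line and intersecting with the plane through the four relevant points), so the conic $Z$ itself, and hence the pair $(S,Z)$, is unique. That stronger form is what is actually used in Theorem~\ref{theo:specialize}, so your argument, while adequate for the literal wording, does not recover the enumerative content of the paper's proof.
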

The K3 surface $S$ has lattice polarization
\begin{equation} \label{eqn:LP}
\begin{array}{r|ccc}
	& h  & E & Z \\
\hline
h  & 20 & 5 & 2 \\
E  & 5 &  0 & 0 \\
Z  & 2 &  0 & -2
\end{array}
\end{equation}
where $E$ is the elliptic fibration inducing the $g^1_5$ on $D \in |h|$. 
\begin{proof}
We present the basic geometric set-up.  We have
$$\bP^1 \times V \subset \bP^1 \times \bP^6 \subset \bP^{13}$$
where the last inclusion is the Segre embedding.  
Given a flag
$$\bP^{10} \subset \bP^{11} \subset \bP^{13},$$
intersecting with $\bP^1 \times V$ yields
$$D \subset S \subset \bP^1 \times V,$$
where $D$ is a canonically embedded pentagonal curve of genus five, and $S$ is a K3
surface with lattice polarization: 
$$\begin{array}{r|cc}
	& h  & E \\
\hline
h     & 20 & 5 \\
E     & 5  & 0 
\end{array}
$$
Fixing $D$, the K3 surfaces $S$ containing $D$ correspond to the $\bP^{11}$'s
in $\bP^{13}$ containing the fixed $\bP^{10}$, which are parametrized by $\bP^2$.  
This proves the first assertion.

We prove the existence assertion of the second part. The assumption $\phi_1(d_1)=\phi_1(d_2)=:p$ means that
$$d_1,d_2 \in \{p \} \times V \subset \bP^1 \times V.$$
Two-dimensional linear sections $S$ containing $D$ and the desired conic correspond to conics
$Z\subset V$ passing through $d_1,d_2 \in V$.  Indeed, $S$ may be recovered 
from $Z$:
$$S=\mathrm{span}(D\cup_{d_1,d_2}Z) \cap (\bP^1 \times V).$$

Note that each such $S$ is automatically regular along $D$.  
For generic pentagonal $D$ and $p \in \bP^1$, $S$ must be regular everywhere.
We see this by a parameter count.  Pairs $(S,D)$ where $S$ is a K3 surface
with lattice polarization (\ref{eqn:LP}) and $D\in |h|$ depend on $17+11=28$
parameters.  Pairs $(D,p)$ where $D$ admits a degree-five morphism to $\bP^1$
and $p \in \bP^1$ depend on $30-3+1=28$ parameters.  Thus the $(D,p)$ 
arising from singular K3 surfaces cannot be generic.  

The uniqueness assertion boils down 
to an enumerative problem:  How many conics pass through two
prescribed generic points of $V$?  
Recall that the Fano threefold $V$ may be obtained from a smooth quadric hypersurface $Q\subset \bP^4$
explicitly (see e.g. \cite[p.~173]{Isk}):
$$\begin{array}{ccccc}
	&	& \Bl_{\ell}(V)=\Bl_{m}(Q) &  & \\
        & \swarrow & 			& \searrow & \\
V       &          &                    &          & Q
\end{array}
$$
The rational map $V\dashrightarrow Q$ arises from projecting from a line $\ell \subset V$;
the inverse $Q \dashrightarrow V$ is induced by the linear series of quadrics vanishing
along a twisted cubic curve $m \subset Q$.  Note that $V$ contains a two-parameter family of
lines which sweep out the threefold, so we may take $\ell$ disjoint from $d_1$ and $d_2$.

Using this modification, our enumerative problem may be transferred to $Q$:  How many
conics in $Q$ pass through two prescribed generic points $d_1,d_2 \in Q$ and meet a twisted cubic 
$m \subset Q$ twice at unprescribed points?  There is one such curve.  Indeed,
projection from the line spanned by the two prescribed points gives a morphism
$$m \ra \bP^2$$
with image a nodal cubic.  Let $m_1,m_2 \in m$ be the points lying over the node;
the set $\{m_1,m_2,d_1,d_2 \} \subset Q \subset \bP^4$ lies in a plane $P$.  The stipulated conic
arises as the intersection
$P \cap Q.$
\end{proof}

\section{Specializing pentagonal curves of genus $11$}
\label{S:moredegenerate}

For our ultimate application to genus $7$ curves, we must specialize 
the construction of Section~\ref{S:degenerate} further:
\begin{theo} \label{theo:specialize}
Let $C_1$ denote a generic curve of genus five, $c_1,\ldots,c_7 \in C_1$
generic points, and $\psi:C_1 \ra \bP^1=:R'$ a degree four morphism.
Let $D=C_1 \cup_{c_j=\psi(c_j)} R'$ denote the genus eleven nodal curve
obtained by gluing $C_1$ and $R'$, which is automatically pentagonal. 
Fix an additional generic point $c_0 \in C_1$, and
write $\psi^{-1}(\psi(c_0))=\{c_0,c'_1,c'_2,c'_3 \}$.  
Then there exists a unique embedding $D \hookrightarrow S$ where $S$ is a K3 surface
with lattice polarization 
$$\begin{array}{r|cccc}
 & f & C_1 & C_2 & R' \\
\hline
f & 4 & 7 & 1 & 6 \\
C_1 & 7 & 8 & 3 & 7 \\
C_2 & 1 & 3 & -2 & 0 \\
R' & 6 & 7 & 0 & -2
\end{array}
$$
where $C_2\simeq \bP^1$ intersects $C_1$ at $\{c'_1,c'_2,c'_3 \}$.
\end{theo}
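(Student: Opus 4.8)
The plan is to specialize the construction of Theorem~\ref{theo:pentagonal} to the particular nodal pentagonal curve $D=C_1\cup_{c_j=\psi(c_j)}R'$, exploiting the fact that $D$ is a limit of generic pentagonal curves of genus eleven. Its relevant $g^1_5$ is the admissible cover $\phi_1:D\to R'$ restricting to $\psi$ on $C_1$ and to the identity on $R'$. For generic $(C_1,\psi,c_1,\ldots,c_7)$ the Maroni-type hypothesis of Remark~\ref{rema:noMaroni} persists, i.e.\ $(\phi_1)_*\omega_D\simeq\omega_{\bP^1}\oplus\cO_{\bP^1}(1)\oplus\cO_{\bP^1}(2)^{\oplus 3}$; this is an open condition, satisfied for generic data because $D$ deforms to a generic pentagonal curve, and it may also be checked directly by pushing the normalization sequence for the seven nodes forward along $\phi_1$. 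Hence Proposition~\ref{prop:toGr} yields a canonical embedding $(\phi_1,\phi_2):D\hookrightarrow\bP^1\times V\subset\bP^{13}$, and, writing $\langle D\rangle=\bP^{10}$, the K3 surfaces $S$ with $D\in|h|$ are exactly the codimension-two linear sections $\bP^{11}\cap(\bP^1\times V)$ containing $\bP^{10}$, a $\bP^2$ of them. Everything then reduces to using the extra point $c_0$ to single out the one such $S$ that also carries the rational curve $C_2$.

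To identify $C_2$, work inside the displayed lattice, in which the class of the elliptic fibration induced by the projection $S\to\bP^1$ is $E:=C_1+C_2-f$, with $E^2=0$, $E\cdot h=5$, $E\cdot C_1=4$, $E\cdot R'=1$, $E\cdot C_2=0$. Thus $C_2$ — the effective $(-2)$-class, a smooth rational curve for generic parameters — is orthogonal to $E$, hence lies in a single fibre $\{q\}\times V$ of $\bP^1\times V\to\bP^1$; as $h\cdot C_2=3$ and $(h-E)\cdot C_2=3$ with $h-E$ the pullback of the hyperplane class of $V$, it is a twisted cubic in $V$, while $C_1\cdot C_2=3$ and $R'\cdot C_2=0$ force it to pass through three of the four points of $C_1\cap(\{q\}\times V)=\psi^{-1}(q)$ and to miss $R'$. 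The fourth of these points is $c_0$, so $q=\psi(c_0)$ and the remaining three are $c_1',c_2',c_3'$. Conversely, a twisted cubic $C_2\subset\{q\}\times V$ through $c_1',c_2',c_3'$ spans a $\bP^3$ meeting $\langle D\rangle=\bP^{10}$ in exactly the plane $\langle c_1',c_2',c_3'\rangle$ (no more, since $\langle D\rangle\cap(\bP^1\times V)=D$ while $C_2\not\subset D$), so $\Pi:=\langle\langle D\rangle,\langle C_2\rangle\rangle$ is a $\bP^{11}$ and $S:=\Pi\cap(\bP^1\times V)$ is a surface containing $D$ and $C_2$ with $h|_S=C_1+R'$, uniquely determined by $C_2$. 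Hence existence and uniqueness of the embedding $D\hookrightarrow S$ reduce to those of the twisted cubic on $V$ through the three prescribed points; that it is three points, rather than the two of Theorem~\ref{theo:pentagonal}, is forced by the requirement that $C_1\cup_{c_i'}C_2$ have arithmetic genus seven, this being the curve deformed in the sequel.

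The core of the proof, and the main obstacle, is therefore the enumerative assertion: through three general points of the quintic del Pezzo threefold $V$ there passes exactly one twisted cubic. As in the conic count ending the proof of Theorem~\ref{theo:pentagonal}, I would transfer it along the modification $\Bl_\ell V=\Bl_m Q$ obtained by projecting from a line $\ell\subset V$ chosen disjoint from $c_1',c_2',c_3'$: a twisted cubic on $V$ disjoint from $\ell$ corresponds to a twisted cubic on the smooth quadric threefold $Q\subset\bP^4$ meeting the auxiliary twisted cubic $m\subset Q$ in three points, so the count becomes the number of twisted cubics in $Q$ through three general points and meeting $m$ thrice. Projecting $Q$ from the plane spanned by the three points, in the manner of Theorem~\ref{theo:pentagonal}, should reduce this to a short explicit computation giving the value one; the same analysis shows the resulting $C_2$ is smooth, irreducible, and disjoint from $c_0$.

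It remains to confirm that $S$ is a genuine K3 surface with Picard lattice exactly the displayed rank-four lattice. Ampleness of $h=C_1+R'$ is readily checked ($h\cdot C_2=3$, $h\cdot R'=5$, $h\cdot E=5$), and smoothness together with the absence of further classes follows since for generic parameters $S$ realizes a generic rank-four polarized K3 surface: pairs $(S,D=C_1+R')$ of the prescribed lattice type depend on $(20-4)+\dim|C_1|=16+5=21$ moduli, matching the $12+1+7+1=21$ parameters of $(C_1,\psi,c_1,\ldots,c_7,c_0)$, and the assignment between them is generically finite, so data coming from singular or higher-Picard $S$ cannot be generic. The Gram matrix is then read off directly — $C_1^2=8$ (genus five), $C_1\cdot R'=7$ (the nodes), $C_1\cdot C_2=3$ (the points $c_i'$), $C_2^2=R'^2=-2$, $R'\cdot C_2=0$ (distinct fibres) — with the entries involving $f=C_1+C_2-E$ obtained by linearity from the intersection numbers of $E$ above; the deformation-theoretic and enumerative inputs to the smoothness and counting steps are supplied by the Appendix.
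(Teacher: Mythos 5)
Your geometric picture is consistent with the paper's, but your route hinges on a new enumerative claim that you never prove: that through three general points of the quintic del Pezzo threefold $V$ there passes a unique twisted cubic. This is the entire content of existence and uniqueness in your argument (the span construction reduces everything to it), and "should reduce to a short explicit computation giving the value one" is not a proof --- after transferring to the quadric $Q$ you are counting twisted cubics in $Q$ through three general points meeting the auxiliary cubic $m$ three times, which is a genuinely harder count than the conic count in Theorem~\ref{theo:pentagonal}, and you would also owe the genericity of $c_1',c_2',c_3'$ in $V$ and the disjointness of the resulting cubic from $c_0$. The paper avoids introducing any new count: it takes $d_1=c_0$ and $d_2=\psi(c_0)\in R'$, two points of $D$ in the same fiber, and invokes the second part of Theorem~\ref{theo:pentagonal} (unique conic $Z\subset V$ through two general points, hence a unique K3 containing $D$ and $Z$); the class you call $C_2$ then appears not as a curve you must construct but as the residual class $E-Z$ under the change of basis $h=C_1+R'$, $E=C_1+C_2-f$, $Z=C_1-f$, which produces the stated Gram matrix directly. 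So while your twisted cubic is (in the paper's surface) exactly the residual curve of $Z$ in the fiber over $\psi(c_0)$, your proof as written has a hole precisely where the paper's is self-contained.

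A second, smaller gap: your justification that the nodal curve $D$ satisfies the balancedness condition of Remark~\ref{rema:noMaroni} is logically inverted. Balancedness is an \emph{open} condition, so the fact that $D$ deforms to a generic pentagonal curve says nothing about $D$ itself; you would need either a closedness statement (false here) or a direct verification, which you only gesture at via the normalization sequence. The paper supplies the actual argument: it shows $h^0(\omega_D(-2g^1_5))=3$ by noting that otherwise $|2g^1_5|$ maps $D$ to $\bP^{m-1}$ with $m\ge 4$, forcing the images of $c_1,\dots,c_7$ (on the conic image of $R'$) to be coplanar and hence $2g^1_4-c_1-\cdots-c_7$ to be effective on $C_1$, contradicting the genericity of the seven points. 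Your final smoothness-by-parameter-count and the computation of the Gram entries do match the paper.
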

\begin{proof}
We claim that the argument of Theorem~\ref{theo:pentagonal} applies, as $D$ satisfies
Remark~\ref{rema:noMaroni}.  
Indeed, it suffices to show that
$$h^0(\omega_D(-2g^1_5))=3;$$
if $(\phi_1)_*\omega_D$ failed to have the expected decomposition then 
$$h^0(\bP^1,(\phi_1)_*\omega_D)>3,$$
a contradiction.

If $h^0(\omega_D(-2g^1_5))>3$ then 
$$m:=h^0(\cO_D(2g^1_5))=h^1(\cO_D(2g^1_5))>3;$$
clearly $2g^1_5$ is basepoint free, so we have a morphism
$$D \ra \bP^{m-1}, \quad m\ge 4.$$
The image of $R'$ under this morphism is a plane conic,
hence the images of $c_1,\ldots,c_7$ are distinct coplanar
points.  This means that on $C_1$
$$2g^1_4-c_1-\cdots-c_7, \quad g^1_4=g^1_5|C_1$$
is effective, contradicting the genericity of $c_1,\ldots,c_7$. 

Now we take $d_1=c_0$ and $d_2=\psi(c_0) \in R' \simeq \bP^1$.
Thus $D$ is contained in a distinguished surface $S$ containing a
rational curve $Z \ni d_1,d_2$.  
Hence $S$ has lattice polarization
$$
\begin{array}{r|cccc}
	& C_1 & R'  & E & Z \\
\hline
C_1  & 8 & 7 & 4& 1 \\
R' &  7  & -2 & 1  & 1   \\
E  & 4 & 1 &  0 & 0 \\
Z  & 1 & 1 & 0 & -2
\end{array}
$$
containing the intersection matrix (\ref{eqn:LP}).
Using the identifications
$$h=C_1 + R', \ E=C_1+C_2-f,\ Z=C_1-f,\ R'=R',$$
we obtain the desired lattice polarization.

It remains to show that for generic inputs the surface $S$ is in fact
smooth.  As before, this follows from a parameter count.  The data
$$(C_1,c_0,c_1,\ldots,c_7)$$
consists of a genus five curve ($12$ parameters), a choice of $g^1_4$ on
that curve ($1$ parameter), and eight generic points, for a total
of $21$ parameters.  On the output side, we have a K3 surface with the prescribed
lattice polarization of rank four
($16$ parameters) and a curve in the
linear series $|C_1|$ ($5$ parameters).  Thus for generic input data, the resulting
surface is necessarily smooth. 
\end{proof}

\section{Genus $7$ K3 surfaces and rational normal septic curves}
Consider the moduli space of lattice-polarized K3 surfaces of type
$$\Lambda':=\begin{array}{r|cc}
	& C & R' \\
\hline
C   & 12 & 7 \\
R'  & 7 & -2
\end{array}
$$
and let $\cP_{\Lambda}$ denote the moduli space of pairs $(S,D)$, where $S$ is such a 
K3 surface and $D \in |C|$ is smooth and meets $R'$ transversely.
\begin{prop} \label{prop:GF}
The forgetting morphism
$$
\varphi_{\Lambda'}:\cP_{\Lambda'}  \ra  \cM_{7;7}
$$
is generically finite.
\end{prop}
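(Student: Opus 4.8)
The plan is to match dimensions, reduce the statement to dominance of $\varphi_{\Lambda'}$, and then establish dominance by producing a single pair at which the differential of $\varphi_{\Lambda'}$ is an isomorphism, the pair being built from the degeneration of Section~\ref{S:moredegenerate}.

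First, the dimension count. The moduli space $\cF_{\Lambda'}$ of $\Lambda'$-polarized K3 surfaces has dimension $20-\rank\Lambda'=18$. For such an $S$ the polarization $C$ is ample with $C^2=12$, so $h^0(\cO_S(C))=2+C^2/2=8$ and $\dim|C|=7$; since $C$ is ample and primitive, $|C|$ is base-point free with smooth general member $D$ meeting the $(-2)$-curve $R'$ in $C\cdot R'=7$ distinct points. Hence $\cP_{\Lambda'}$ is a dense open subset of a $\bP^7$-bundle over the irreducible space $\cF_{\Lambda'}$, so it is irreducible of dimension $18+7=25$, while $\dim\cM_{7;7}=\dim\cM_{7,7}=3\cdot 7-3+7=25$. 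Therefore $\varphi_{\Lambda'}$ is generically finite as soon as it is dominant.

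Next I would construct a test pair from Theorem~\ref{theo:specialize}. For generic data $(C_1,c_0,c_1,\dots,c_7,\psi)$ that theorem furnishes a K3 surface $S$ carrying the displayed rank-four lattice. Put $C:=C_1+C_2$; then $C^2=8+2\cdot 3-2=12$, $C\cdot R'=7+0=7$ and $(R')^2=-2$, so $\langle C,R'\rangle\cong\Lambda'$ and $S$ is $\Lambda'$-polarized. The seven-dimensional linear system $|C|=|C_1+C_2|$ contains the reducible nodal curve $C_1\cup C_2$, of arithmetic genus $5+3-1=7$, and for generic input its general member $D$ is a smooth genus-seven curve meeting $R'$ transversally; hence $(S,D)\in\cP_{\Lambda'}$ and $\varphi_{\Lambda'}(S,D)=(D;D\cap R')$.

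It remains to show that $d\varphi_{\Lambda'}$ is injective at a general such $(S,D)$; by the equality of dimensions it is then an isomorphism and $\varphi_{\Lambda'}$ is dominant. The tangent space sits in an exact sequence $0\to H^0(D,\cO_D(D))\to T_{(S,D)}\cP_{\Lambda'}\to T_{[S]}\cF_{\Lambda'}\to 0$ with terms of dimensions $7$, $25$, $18$, and $d\varphi_{\Lambda'}$ records the induced deformation of the abstract curve $D$ together with the motion of the seven points $p_i\in D\cap R'$; since $R'$ is a $(-2)$-curve it is rigid in $S$, so that motion is determined by the deformation of $(S,D)$. Injectivity along the $|C|$-directions is the standard fact that the Kodaira--Spencer map of the universal curve over $|C|$ is injective at a general member. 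The main obstacle is the $\cF_{\Lambda'}$-direction: one must show that a nonzero first-order deformation of $S$ cannot fix the abstract pointed curve $(D;p_1,\dots,p_7)$, i.e.\ that $T_{[S]}\cF_{\Lambda'}\to H^1\!\bigl(D,T_D(-\textstyle\sum_i p_i)\bigr)$ is injective. I would attack this by degenerating $D$ within $|C|$ to the reducible curve $C_1\cup C_2$ and appealing to the uniqueness clause of Theorem~\ref{theo:specialize} — the genus-eleven curve $C_1\cup R'$, hence the configuration $C_1\cup C_2\cup R'$, embeds uniquely into a K3 surface with the rank-four lattice — which forces the analogous deformation map to be injective on this degeneration; working in a compactification of $\cP_{\Lambda'}$ dominating $\overline{\cM}_{7;7}$, in which $\varphi_{\Lambda'}$ extends and $(S,C_1\cup C_2)$ maps to a boundary point, injectivity then persists at the general $(S,D)$ by upper semicontinuity. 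Making this limiting argument precise — in particular choosing the right compactification and checking that $\varphi_{\Lambda'}$ behaves well on the boundary — is the technical heart. Here the hypothesis of genericity enters through the fact that a general $D\in|C|$ carries no $g^1_d$ with $d\le 4$ (the generic $\Lambda'$-polarized K3 has no elliptic pencil, since $12a^2+14ab-2b^2$ never vanishes), so $D$ has general Brill--Noether behaviour and no unexpected collapsing occurs.
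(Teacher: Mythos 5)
Your dimension count and the reduction to dominance are fine, and exhibiting a single point of $\cP_{\Lambda'}$ where $d\varphi_{\Lambda'}$ is injective would indeed suffice. The genuine gap is in how you propose to obtain that injectivity. The uniqueness clause of Theorem~\ref{theo:specialize} is a set-theoretic statement about a fiber (there is exactly one K3 surface containing the degenerate configuration); it says nothing about first-order deformations, and a one-point fiber can perfectly well be non-reduced, i.e.\ the map can be ramified there. So the inference that uniqueness ``forces the analogous deformation map to be injective'' is unjustified --- and this is precisely the point the paper treats with care: in the proof of Theorem~\ref{theo:main7}, which runs exactly the degeneration-to-$C_1\cup C_2$ argument you have in mind, the uniqueness from Theorem~\ref{theo:specialize} only gives that the degree is at least one, and excluding ramification requires the entire explicit tangent-space computation of Section~\ref{S:defcomp} (the cohomological set-up of \S\ref{SS:OSS} together with the concrete example over $\bQ$ in \S\ref{SS:Example}). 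Your remaining step --- choosing a compactification of $\cP_{\Lambda'}$ over $\overline{\cM}_{7;7}$ in which $\varphi_{\Lambda'}$ extends, $(S,C_1\cup C_2)$ is an honest point, and semicontinuity of the rank of the differential can be invoked --- is left entirely unconstructed, and it is not routine. Note also that your test surfaces carry the isotropic class $E=C-f$ (with $E^2=0$, $E\cdot C=4$), so every $D\in|C|$ there is tetragonal; this is not a contradiction, but it highlights that the injectivity you need concerns the full $18$-dimensional space $T_{[S]}\cF_{\Lambda'}$, whereas the degeneration of Theorem~\ref{theo:specialize} at best controls deformations preserving the rank-four lattice.

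By contrast, the paper proves Proposition~\ref{prop:GF} by a more direct route that avoids any ramification analysis: dominance is established over an honestly generic point of $\cM_{7;7}$ by realizing the generic $7$-pointed genus-seven curve as a linear section of $\OGr(5,10)$ (Mukai) and producing a rational septic $R'$ through the seven points via Lemma~\ref{lemm:7pts}, whose proof is a deformation argument for a chain of seven lines using the vanishing of $H^1$ of the twisted normal bundle; the K3 surface is then $\OGr(5,10)\cap\mathrm{span}(D\cup R')$, and generic finiteness follows from the equality of dimensions. If you wish to salvage your route, you must replace the appeal to uniqueness by an actual first-order computation at a specific pair, in the spirit of Section~\ref{S:defcomp}.
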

Note that the varieties are both of dimension $25$.  
The main ingredient of the proof is:
\begin{lemm} \label{lemm:7pts}
Let
$\cM_{0,7}(\OGr(5,10),7)$
be the moduli space of pointed mappings of degree $7\ell$,
where $\ell \in H_2(\OGr(5,10),\bZ)$ is Poincar\'e
dual to the hyperplane class $h$.  Then the evaluation map
$$\ev^7:\cM_{0,7}(\OGr(5,10)),7) \ra \OGr(5,10)^7$$
is dominant.  
\end{lemm}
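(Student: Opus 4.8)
The plan is to realize $\ev^7$ as an evaluation map on a Kontsevich space, observe that a dimension count makes its dominance equivalent to the non-vanishing of a single genus-$0$ Gromov--Witten invariant of the spinor tenfold, and then evaluate that invariant in the (explicitly known) quantum cohomology of $\OGr(5,10)$.

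First I would record the dimension count. Every rational curve of class $7\ell$ lies in a single connected component $X$ of $\OGr(5,10)$, on which I work; $X$ is homogeneous, hence convex, so $H^1(\bP^1,f^*T_X)=0$ for every $f\colon\bP^1\to X$, and $\cM_{0,7}(X,7\ell)$ is therefore irreducible of the expected dimension with virtual class equal to the fundamental class. Since $\dim X=10$ and $X$ has Fano index $8$ --- a general linear section $X\cap\bP^6$ being a canonical curve of genus $7$, adjunction forces $-K_X=8H$ --- we get
$$\dim\cM_{0,7}(X,7\ell)=\dim X+(-K_X\cdot 7\ell)+7-3=10+56+4=70=\dim X^7.$$
As $X^7$ is irreducible of the same dimension, $\ev^7$ is dominant if and only if it is generically finite, if and only if the genus-$0$ Gromov--Witten invariant $N:=\langle[\pt],\dots,[\pt]\rangle_{0,7,7\ell}$ with seven point insertions is nonzero.

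Second I would compute, or at least bound below, $N$. The small quantum cohomology ring $QH^*(\OGr(5,10))$ is explicitly known (quantum Pieri/Chevalley rule for maximal orthogonal Grassmannians), and since $X$ is homogeneous the seven-point invariant $N$ is reconstructed from three-point ones by the WDVV relations; concretely $N$ is the coefficient of $q^{7\ell}$ in $\int_X[\pt]^{\star 6}\cup[\pt]$, a finite computation. More robustly, one can instead exhibit one explicit degree-$7$ rational curve on $X$ together with seven explicit points on it at which the Jacobian of $\ev^7$ has full rank $70$, so that $N\ge 1$ by semicontinuity. Either way this is the role of the appendix, and I expect it to be the main obstacle, because the situation is numerically extremal --- $\cM_{0,7}(X,7\ell)$ and $X^7$ have exactly the same dimension, with no room to spare. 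In particular one cannot fall back on the usual device of degenerating the degree-$7$ curve to a chain of lines or conics: low-degree rational curves on $\OGr(5,10)$ are too rigid (the cone of lines through a point is only seven-dimensional), so a chain constrained to pass through prescribed general points cannot be propagated past a few components.

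Finally, the cohomological shape of that Jacobian computation is as follows. For a general immersed degree-$7$ curve $f\colon\bP^1\to X$, the bundle $E:=f^*T_X$ is globally generated with $c_1(E)\cdot[\bP^1]=56$ and balanced splitting $\cO(6)^{6}\oplus\cO(5)^{4}$, so the evaluation sequence $0\to E(-p_1-\cdots-p_7)\to E\to\bigoplus_{i=1}^{7}E|_{p_i}\to 0$ identifies the cokernel of $H^0(E)\to\bigoplus_iE|_{p_i}$ with $H^1(E(-p_1-\cdots-p_7))\cong\bC^{4}$. Since $H^1(E)=0$, surjectivity of $d\ev^7$ at $[f;p_\bullet]$ --- hence dominance of $\ev^7$ --- is precisely the assertion that the images of the seven curve-tangent vectors $f'(p_1),\dots,f'(p_7)$ span this four-dimensional cokernel, which is the full-rank condition on the Jacobian of $\ev^7$. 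Through Mukai's realization of genus-$7$ K3 surfaces as linear sections $X\cap\bP^7$, this is the same statement as the generic finiteness recorded in Proposition~\ref{prop:GF}.
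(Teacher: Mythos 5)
Your reduction of the lemma to the nonvanishing of the seven-point invariant is reasonable in principle (for a homogeneous target the invariant is enumerative, so $\langle[\pt],\ldots,[\pt]\rangle_{0,7,7\ell}\neq 0$ would indeed give dominance), but the step that actually establishes nonvanishing is missing, and the one concrete recipe you offer is wrong. The cohomology of $\OGr(5,10)$ is \emph{not} generated by $H^2$ (already $H^8$ has rank two, spanned by $\tau_4$ and $\tau_{31}$, while the subring generated by $\tau_1$ has rank at most one in each degree), so the Kontsevich--Manin first reconstruction theorem does not apply, and the seven-point invariant is \emph{not} ``the coefficient of $q^{7\ell}$ in $\int_X[\pt]^{\star 6}\cup[\pt]$'': that expression only packages three-point invariants of the small quantum ring, whereas the invariant you need lives in the big quantum cohomology. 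This is precisely why the appendix must feed the WDVV relations with independently computed degree-one invariants with arbitrarily many insertions (obtained from the geometry of the space of lines $\OGr(3,10)$ via divided-difference operators) and then bootstrap degree by degree, checking case by case, before arriving at $I_7([\pt],\ldots,[\pt])=71$. Your fallback --- exhibit one explicit septic with surjective differential of $\ev^7$ --- would suffice, but you neither produce such a curve nor prove the balanced splitting of $f^*T_{\OGr(5,10)}$ that your cokernel description assumes; so the proposal leaves the essential step unproved.

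Moreover, the degeneration you explicitly rule out is exactly how the paper proves Lemma~\ref{lemm:7pts}, and your objection misreads what the degeneration requires. One does not ask for a chain of lines passing through seven \emph{prescribed general} points of $\OGr(5,10)^7$; one takes a generic chain $\cC_0=\ell_1\cup\cdots\cup\ell_7$ and marks one generic point $r_i(0)$ on each component --- a $7$-tuple that is certainly not general in $\OGr(5,10)^7$ --- and then checks $H^1\bigl(N_{\cC_0/\OGr(5,10)}(-r_1(0)-\cdots-r_7(0))\bigr)=0$ componentwise, using $N_{\cC_0/\OGr(5,10)}|\ell_i\simeq\cO_{\bP^1}^3\oplus\cO_{\bP^1}(1)^5\oplus\cO_{\bP^1}(2)$ for the end lines and $\cO_{\bP^1}^3\oplus\cO_{\bP^1}(1)^4\oplus\cO_{\bP^1}(2)^2$ for the middle ones. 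This unobstructedness says the pointed chain deforms as the seven points move arbitrarily to first order, i.e.\ the evaluation map is submersive at this boundary point of the compactified space; submersivity at a single point already gives dominance, and the irreducible septics must account for it since the boundary has dimension $69<70$. So the chain-of-lines device not only works, it proves the lemma without ever computing the invariant; the Gromov--Witten computation of $71$ is needed only for the degree statement in Remark~\ref{rema:71}, not for dominance.
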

\begin{proof}
Recall that $\dim(\OGr(5,10))=10$ and the canonical class is
$$K_{\OGr(5,10)}=-8h.$$
The expected dimension of the moduli space is $70$, so we expect
$\ev^7$ to be generically finite.

By \cite[15.7]{dJHS}, the evaluation map
$$\ev:\cM_{0,7}(\OGr(5,10)),1) \ra \OGr(5,10)$$
is surjective, i.e., pointed lines dominate $\OGr(5,10)$.  It follows 
from generic smoothness that $H^1(N_{\ell/\OGr(5,10)}(-1))=0$ 
\cite[p.33]{AK} and 
$$N_{\ell/\OGr(5,10)} \simeq \cO_{\bP^1}^{\oplus 3} \oplus \cO_{\bP^1}(1)^{\oplus 6}.$$

Fix a generic chain of seven lines in $\OGr(5,10)$
$$\cC_0:=\ell_1 \cup_{e_{12}} \ell_2 \cup_{e_{23}} \ldots \cup_{e_{67}}\ell_7,$$
as well as generic points $r_j(0) \in \ell_j$.  Consider the following moduli
problem:  Fix a smooth base scheme $B$ with basepoint $0$ and morphisms
$r_j:B \ra \OGr(5,10), j=1,\ldots,7$ mapping $0$ to the point $r_j(0)$
specified above.  We are interested in subschemes 
$$\begin{array}{ccccc}
r_1,\ldots,r_7 & \subset & \cC & \subset & \OGr(5,10) \times B \\
		& \searrow &\downarrow   & \swarrow   &   \\
                &          & B  &     & 
\end{array}
$$
all flat over $B$, with the distinguished fiber of $\cC$ equal to $\cC_0$.  
The deformation theory of this Hilbert scheme as a scheme over $B$ \cite[\S 6]{AK} is governed by
the tangent space 
$$\Gamma(N_{\cC_0/\OGr(5,10)}(-r_1(0)-\cdots-r_7(0)))$$
and the obstruction space
$$H^1(N_{\cC_0/\OGr(5,10)}(-r_1(0)-\cdots-r_7(0))).$$
If the latter group is $0$, the Hilbert scheme is flat over $B$ of the expected dimension
$$\chi(N_{\cC_0/\OGr(5,10)}(-r_1(0)-\cdots-r_7(0)))=0.$$
Since $\cC_0$ is a chain of rational curves, it suffices to exclude higher cohomology
on each irreducible component $\ell_i$.  For $i=1,7$ we have
$$N_{\cC_0/\OGr(5,10)}|\ell_i=\cO_{\bP^1}^3 \oplus \cO_{\bP^1}(1)^5 \oplus \cO_{\bP^1}(2)$$
which has trivial higher cohomology, even after twisting by $\cO_{\ell_i}(-r_i(0))$.
For $i=2,\ldots,6$, we have
$$N_{\cC_0/\OGr(5,10)}|\ell_i=\cO_{\bP^1}^3 \oplus \cO_{\bP^1}(1)^4 \oplus \cO_{\bP^1}(2)^2,$$
reflecting the two attaching points on $\ell_i$.  Twisting by $\cO_{\ell_i}(-r_i(0))$,
we have no higher cohomology as well.  

Choosing $r_j:B \ra \OGr(5,10)$ suitably generic, we conclude there is a zero dimensional
collection of rational septic curves passing through seven generic points of $\OGr(5,10)$.
\end{proof}

We complete the proof of Proposition~\ref{prop:GF}.
Let $D$ denote a generic curve of genus seven and $r_1,\ldots,r_7 \in D$ generic
points.  As we recalled in Section~\ref{S:GB}, $C'$ arises as a linear section of $\OGr(5,10)$,
and Lemma~\ref{lemm:7pts} yields a septic rational curve $R' \subset \OGr(5,10)$ 
containing these seven points.  The intersection
$$\OGr(5,10) \cap \mathrm{span}(D \cup R')$$
is a K3 surface, with the prescribed lattice polarization.

\begin{rema}  \label{rema:71}
What is the degree of $\varphi_{\Lambda'}$?  The birationality results quoted in Section~\ref{S:GB}
imply that the degree equals
the degree of the generically-finite mapping
$$\ev^7:\cM_{0,7}(\OGr(5,10)),7) \ra \OGr(5,10)^7.$$
See the Appendix for a proof that $\deg(\ev^7)=71$.  
\end{rema}

\section{From genus $11$ to genus $7$}

\begin{theo} \label{theo:main7}
Let $\pi:C\ra R'=\bP^1$ be a tetragonal curve of genus seven, $C\subset \bP^3$
the adjoint embedding as a curve of degree eight, $c_1,\ldots,c_7 \in C$
generic points.  Then there exists a unique embedding 
$$\varpi:R' \hookrightarrow \bP^3, \quad \varpi(\pi(c_j))=c_j$$
such that there exists a quartic surface $S$ containing both $C$ and $R'$.  
\end{theo}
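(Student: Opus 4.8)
The plan is to reduce Theorem~\ref{theo:main7} to the genus-eleven picture of Theorem~\ref{theo:specialize} by degenerating the K3 surface there to a surface whose polarization drops, while keeping track of the tetragonal structure. Concretely, given the tetragonal curve $\pi:C\ra R'$ of genus seven, I would first build the auxiliary data of Theorem~\ref{theo:specialize}: a genus-five curve $C_1$, a degree-four map $\psi:C_1\ra \bP^1$, and points $c_1,\ldots,c_7$. The natural candidate is to take $C_1$ to be a general hyperplane/Brill-Noether-type partner whose $g^1_4$ glues to $C$ along $R'$; in fact the construction should run in the other direction — the septic rational normal curve machinery of Section~\ref{S:GB}, specifically Mukai's birationality $\Fl(7,8,16)/G \dashrightarrow \cP_7$, already attaches to a generic genus-seven curve $C$ and seven generic points a septic rational curve $R'\subset\OGr(5,10)$ through them (Lemma~\ref{lemm:7pts} and Proposition~\ref{prop:GF}), and the span of $C\cup R'$ cuts out a K3 surface $\Sigma$ of genus seven with lattice polarization $\Lambda'$ of \eqref{eqn:LP}-type, i.e.
$$\begin{array}{r|cc} & C & R' \\ \hline C & 12 & 7 \\ R' & 7 & -2 \end{array}.$$
So the existence half is essentially already in hand: $\varpi$ is the embedding $R'\hookrightarrow\bP^3$ obtained by restricting the linear series that realizes $\Sigma$ as a quartic surface — namely $\bP^3 = \bP(H^0(C,\cO_C(K_C - g^1_4))^\vee)$, the adjoint $\bP^3$ of the statement, because on a genus-seven tetragonal curve the adjoint series $|K_C-g^1_4|$ has degree $2\cdot 7 - 2 - 4 = 8$ and projective dimension $3$, and the polarization class $C$ on the K3 restricts to exactly this. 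The quartic surface $S$ is then the image of $\Sigma$ under the map given by $|C|$.

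The key steps, in order, are: (i) verify that for a generic tetragonal $C$ and generic $c_j$ the lattice-polarized K3 surface $\Sigma$ produced by Proposition~\ref{prop:GF} actually carries $|C|$ as a degree-four base-point-free complete linear system mapping $\Sigma$ birationally onto a quartic in $\bP^3$ — this is a standard Saint-Donat type argument (no conic component in $|C|$, no hyperelliptic issues) once one checks $C\cdot R' = 7$ is odd so $C$ is not $2$-divisible and hence not hyperelliptic-type; (ii) check that $R'$, an indecomposable $(-2)$-class with $C\cdot R' = 7$, maps under $|C|$ to a rational curve of degree seven in $\bP^3$, i.e. a rational normal septic — this follows because $R'$ is smooth rational and $\deg(\cO_\Sigma(C)|_{R'}) = 7$; (iii) match the restriction of $|C|$ to $C$ with the intrinsic adjoint embedding $C\subset\bP^3$, so that the seven intersection points $C\cap R'$ (a general element of $\cM_{0,7}/\mathfrak S_7$ on $R'$, by genericity coming from Lemma~\ref{lemm:7pts}) get identified with $c_1,\ldots,c_7$, forcing $\varpi(\pi(c_j)) = c_j$; and (iv) prove uniqueness of $\varpi$ — equivalently, uniqueness of the quartic $S$ — which is where the dimension count and the enumerative input must be invoked.

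The main obstacle is step (iv), uniqueness. I would argue it as a degree-one statement: $\varphi_{\Lambda'}:\cP_{\Lambda'}\ra\cM_{7;7}$ is generically finite of some degree $N$ by Proposition~\ref{prop:GF}, and Remark~\ref{rema:71} (via the appendix) identifies $N$ with $\deg(\ev^7) = 71$ for the \emph{unordered} problem; but the present theorem fixes an \emph{ordered} tuple $(c_1,\ldots,c_7)$ together with the prescription $\varpi(\pi(c_j)) = c_j$, and one must see that this rigidifies the fiber to a single point. The mechanism should be: a quartic $S\subset\bP^3$ through the adjoint curve $C$ meets a general hyperplane... no — rather, $S$ is determined by $C$ together with $R'$ as $S = \mathrm{span}$-type intersection exactly as in the uniqueness part of Theorem~\ref{theo:pentagonal} (''$S = \mathrm{span}(D\cup_{d_1,d_2}Z)\cap(\bP^1\times V)$''), so uniqueness of $S$ is equivalent to uniqueness of $R'\subset\bP^3$ with the stated incidences. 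Here one passes to $\OGr(5,10)$: lifting $\bP^3$ to the span $\Pi_8$ realizing the genus-seven K3, the curve $R'$ must be a septic rational curve meeting $C$ at the prescribed seven points, and Lemma~\ref{lemm:7pts}'s deformation-theoretic count — $\chi(N_{\cC_0/\OGr(5,10)}(-r_1(0)-\cdots-r_7(0))) = 0$ with vanishing $H^1$ — shows the space of such curves through seven \emph{ordered} points is zero-dimensional; combined with the irreducibility/monodromy that makes $\varphi_{\Lambda'}$ behave well, a general fiber of the ordered problem is a single reduced point, giving the unique $\varpi$. The residual work is to confirm that requiring the points to lie on the fixed adjoint $C$ (rather than on an abstract genus-seven curve) does not introduce extra components — this is handled by the parameter count in Proposition~\ref{prop:GF} (both sides have dimension $25$) together with genericity of $C$ and of $c_1,\ldots,c_7$.
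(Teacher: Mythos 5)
There is a genuine gap, and it sits at the heart of both halves of your argument: you apply the generic genus-seven Mukai machinery to a curve that is excluded from it. Lemma~\ref{lemm:7pts} and Proposition~\ref{prop:GF} concern Brill--Noether general genus-seven curves, which arise as transversal linear sections of $\OGr(5,10)$; a \emph{tetragonal} genus-seven curve (the hypothesis of Theorem~\ref{theo:main7}, lying in the divisor $\cT\subset\cM_{7;7}$) is precisely not such a linear section, so neither the existence of the septic $R'$ through seven points of $C$ nor any span-type recovery of $S$ inside $\OGr(5,10)$ is available. Relatedly, in the quartic model the class of $R'$ has $f\cdot R'=6$, so $R'$ is a sextic in $\bP^3$ (and ``rational normal septic in $\bP^3$'' is not meaningful). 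The paper's route to existence is entirely different: it specializes $C\leadsto C_1\cup C_2$ and deforms the K3 surface of Theorem~\ref{theo:specialize} (with its rank-four lattice) to one with polarization (\ref{eqn:r3}), using that smooth rational curves on K3 surfaces deform as long as their classes stay algebraic.

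Your uniqueness mechanism also does not work. The generic degree of $\varphi_{\Lambda'}$ is $71$, and passing from unordered to ordered marked points does not change the fiber over a given pointed curve; nothing in Lemma~\ref{lemm:7pts} or the Appendix rigidifies $71$ to $1$. The ``$1$'' in Theorem~\ref{theo:main7} comes from the additional constraint that $R'$ be a \emph{section} of the elliptic fibration inducing the $g^1_4$ (the condition $\varpi(\pi(c_j))=c_j$, i.e.\ the $a=1$ lattice $\Lambda$), which is a different, more rigid moduli problem over the tetragonal divisor, not a sublocus of the degree-$71$ count over generic curves. The paper proves $\deg(\varphi_\Lambda:\cP_\Lambda\to\cT)=1$ by three steps absent from your proposal: (i) over the special point $C_1\cup C_2$, Theorem~\ref{theo:specialize} gives a \emph{unique} K3 surface, so the degree is at least one; (ii) if the degree exceeded one, the second family of quartic surfaces would specialize to a surface with at worst ADE singularities (the lemma that non-ADE quartics have codimension $\geq 4$ in $\bP^{34}$), hence to a second K3 containing the special curve, contradicting (i); and (iii) the explicit tangent-space/deformation computation of Section~\ref{S:defcomp} rules out ramification at the relevant stratum. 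To repair your argument you would need substitutes for all three, and in particular you cannot lean on monodromy or on the $\OGr(5,10)$ picture, since the tetragonal $C$ never embeds there.
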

The K3 surfaces in this case have lattice polarization:
\begin{equation} \label{eqn:r3}
\Lambda=\begin{array}{r|ccc}
   & f & C & R' \\
\hline
f & 4 & 8 & 6 \\
C  & 8 & 12 & 7 \\
R' & 6 & 7 & -2
\end{array}
\end{equation}
\begin{proof}
We regard Theorem~\ref{theo:specialize} as a special case of this, via
the specialization
$$C \leadsto C_1 \cup C_2.$$
We can deform the K3 surfaces in Theorem~\ref{theo:specialize}
to the K3 surfaces with lattice polarization (\ref{eqn:r3}).  
Under this deformation, $R'$ deforms to a rational curve in the nearby
fibers; smooth rational curves in K3 surfaces always deform provided
their divisor classes remain algebraic.  

We claim that $C_1 \cup C_2$ deforms to a generic 
tetragonal curve of genus seven, with seven generic marked points
traced out by $R'$.

For our purposes, we would like to restrict the curve $C$ to be tetragonal.  This
is equivalent (see \cite{GL}) to imposing a lattice polarization of the type
$$\begin{array}{r|ccc}
	& C & R' & E \\
\hline
C   & 12 & 7 & 4 \\
R'  & 7 & -2 & a \\
E  & 4  & a & 0
\end{array}
$$
where $E$ is the class of a fiber of an elliptic fibration.  Restricting $\varphi_{\Lambda'}$
to each such lattice polarization, we obtain a generically finite morphism to
the Brill-Noether divisor $\cT \subset \cM_{7;7}$ corresponding to the
tetragonal curves.  In our geometric analysis, it will be convenient to use
a different basis
$$\begin{array}{r|ccc}
	& C & R' & f \\
\hline
C   & 12 & 7 & 8 \\
R'  & 7 & -2 & 7-a \\
f  & 8  & 7-a & 4 
\end{array}
$$
where $f=C-E$.  We will restrict our attention to the particular component 
with $a=1$, i.e., $R'$ is a {\em section} of the elliptic fibration inducing 
the $g^1_4$ on $C$. 
This is $\cP_{\Lambda}$, where $\Lambda$ is defined in (\ref{eqn:r3}).

Theorem~\ref{theo:main7} asserts that the morphism
$$\varphi_{\Lambda}:\cP_{\Lambda} \ra \cT$$
has degree one.  Consider the specialization 
$C \leadsto C_1 \cup C_2$ as above.  After specialization, Theorem~\ref{theo:specialize}
guarantees a {\em unique} K3 surface containing $C_1 \cup C_2$.  Thus $\varphi_{\Lambda}$
has degree at least one.  If the degree were greater than one, then a generic
point of $\cT$ would yield at least {\em two} surfaces $S$ and $S'$,
with specializations $S_0$ and $S'_0$.  These are necessarily K3 surfaces,
by the following result about degenerate quartic surfaces:

\begin{lemm}
Consider the projective space $\bP^{34}$ parametrizing all quartic surfaces
in $\bP^3$.  Let $\Sigma \subset \bP^{34}$ denote the surfaces with
singularities worse than ADE singularities.  Then the codimension of $\Sigma$
is at least four.
\end{lemm}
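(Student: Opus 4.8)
The plan is to bound the dimension of each locus of quartic surfaces forced to acquire a non-ADE singularity by a parameter count, taking the maximum over the possible types of bad singularities. Recall that a hypersurface singularity fails to be ADE (rational double point) precisely when it is not a simple singularity in Arnol'd's sense; for surface singularities in $\bP^3$ the relevant non-simple germs are, up to analytic equivalence, either a point of multiplicity $\ge 3$, or a double point whose tangent cone degenerates so that the local equation has the form $z^2 + f_4(x,y) + \cdots$ with $f_4$ not reduced after a coordinate change — these are the unimodal and higher families $T_{p,q,r}$, the parabolic $\tilde E_6,\tilde E_7,\tilde E_8$, etc., together with all non-isolated singularities. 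So it suffices to show that each of the following loci has codimension $\ge 4$ in $\bP^{34}$: (i) quartics with a triple point, (ii) quartics with a non-isolated singularity, (iii) quartics with a double point worse than $A_n$, $D_n$, $E_6$, $E_7$, $E_8$.

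First I would dispatch the triple point locus. Requiring a quartic $F=0$ to have multiplicity $\ge 3$ at a fixed point $p$ imposes the vanishing of $F$ and all its first and second partials at $p$, i.e.\ $1+3+6=10$ linear conditions; letting $p$ vary over $\bP^3$ gives a locus of dimension $\le 33-10+3=26$, hence codimension $\ge 8$. The same count shows quartics singular along a line have even higher codimension (a line of triple points, or the residual structure of a non-reduced quartic $Q^2$ with $Q$ a quadric, which is a $9$-dimensional family, codimension $25$). More generally any quartic with a one-dimensional singular locus is either non-reduced or reducible with a common component; both loci are readily seen to have codimension well above four, so (ii) is fine.

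The substantive case is (iii). Here I would fix the singular point at, say, $[0:0:0:1]$ and write the quartic in the affine chart as $f(x,y,z) = f_2 + f_3 + f_4$ with $f_i$ homogeneous of degree $i$. An isolated double point is ADE exactly when, after completing the local ring, the singularity is one of $A_n$ ($n\le$ anything, but for a quartic $n$ is bounded), $D_n$, $E_6, E_7, E_8$; the first non-simple double points are $\tilde E_6: f_2=0$ and $f_3$ a smooth cubic (rank of quadratic part drops to $0$ and the cubic cone is a triangle/cone over an elliptic curve degenerate), $\tilde E_7, \tilde E_8$, and the $T_{p,q,r}$ series — all of which force the quadratic part $f_2$ to have rank $\le 1$. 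So the key claim is: if the quadratic part of the local equation has rank $\ge 2$ the singularity is automatically ADE (indeed $A_n$ for rank $2$ after absorbing higher terms, or $A_1$ for rank $3$), and these rank-$\ge 2$ double points are all simple because the normal forms terminate within the available degrees. Thus the bad double-point locus is contained in $\{$quartics with a double point at which $f_2$ has rank $\le 1\}$. Now count: fixing $p$, vanishing of $f$ and $\nabla f$ is $1+3=4$ conditions; the quadratic part $f_2$ lives in a $6$-dimensional space and the rank $\le 1$ locus there (the cone over the Veronese, i.e.\ $\{$perfect squares $\cup$ $0\}$ together with rank exactly $\le 1$ meaning rank $\le1$) has codimension $3$; letting $p$ vary over $\bP^3$ we get dimension $\le 33 - 4 - 3 + 3 = 29$, only codimension $5$. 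That already suffices, but to be safe one also checks the rank-exactly-$1$ stratum more carefully (when $f_2$ has rank exactly $1$ one still needs the residual cubic/quartic data to be special to escape $A_n$, which only cuts down further), so codimension $\ge 4$ holds with room to spare.

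The main obstacle I anticipate is purely bookkeeping: assembling a clean, complete list of the non-simple surface double-point germs and verifying that each is subsumed by the single clean condition ``$\mathrm{rank}(f_2)\le 1$'' (together with the triple-point and non-isolated cases handled separately). The subtlety is that rank $f_2 = 1$ does not by itself force a non-simple singularity — many $A_n$ and the $D_n, E_n$ double points have rank-$1$ (or even rank-$2$) Hessian — so one must be careful to phrase the containment in the right direction: every \emph{bad} double point has $\mathrm{rank}(f_2)\le 1$, not conversely. Once that containment is in hand the dimension counts above are routine, and each of (i), (ii), (iii) gives codimension at least $4$, completing the proof.
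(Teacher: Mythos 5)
Your treatment of the isolated case is essentially the paper's own argument: fix the singular point, discard multiplicity $\ge 3$ by a linear-conditions count, and observe that every non-simple isolated double point has corank $\ge 2$, i.e.\ quadratic part of rank $\le 1$ (non-reduced tangent cone), a codimension-$3$ condition on the quadratic part, while rank $\ge 2$ isolated double points are $A_n$ by the splitting lemma. That containment is stated in the right direction and is the heart of the matter. (Two arithmetic slips, neither fatal: $\bP^{34}$ has dimension $34$, not $33$; the triple-point locus has codimension $10-3=7$, not $8$, and the rank-$\le 1$ double-point locus has codimension $4+3-3=4$, not $5$ --- still $\ge 4$, which is all the lemma needs, but with no room to spare in the second count.)

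The genuine gap is in your case (ii). The claim that ``any quartic with a one-dimensional singular locus is either non-reduced or reducible with a common component'' is false: there are classical irreducible, reduced quartic surfaces with non-isolated singularities --- quartics with a double line, quartics with a double conic (cyclides), Steiner's Roman surface with three double lines, quartics singular along a twisted cubic, ruled quartics, etc. Your argument dismisses exactly the loci that require work, and this is precisely why the paper appeals to Urabe's classification of irreducible quartics with non-isolated singularities (checking codimension $\ge 4$ case by case), handling the reducible and non-reduced quartics separately. To repair your version without citing a classification, you would need to bound the loci of quartics singular along a curve of each possible degree: e.g.\ singular along a fixed line imposes $13$ conditions, so that locus has codimension $13-4=9$; a similar count is needed for double conics, double twisted cubics, and the remaining possibilities for the one-dimensional singular locus of a reduced irreducible quartic, each of which must be shown to have codimension $\ge 4$. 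As written, case (ii) is not established.
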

\begin{proof}
We first address the non-isolated case.
The reducible surfaces of this type---unions
of two quadric surfaces or a plane and a cubic---have codimension much larger
than four.  
The irreducible surfaces are classified by Urabe \cite[\S 2]{Urabe},
building on the work of numerous predecessors over the last century.  In each case, the
codimension is at least four.  

There is also a substantial literature on the classification of 
isolated singularities of quartic surfaces,
e.g., \cite{JShah,Deg,Ishii}.  However, it will be more convenient for
us to give a direct argument, rather than refer to the details of a 
complete classification.

Since $\bP^3$ is homogeneous, it suffices to show that the surfaces
with a non ADE singularity at $p=[0,0,0,1]$ have codimension $\ge 3$ in the 
locus of surfaces singular at $p$.  The equations with multiplicity $>2$ 
at $p$ have codimension $\ge 6$ and thus can be ignored; the equations of multiplicity
two having non-reduced tangent cone at $p$ have codimension $3$.  The equations
with an isolated singularity of multiplicity two at $p$ (and reduced tangent cone)
are of ADE type. 
\end{proof}

It only remains to exclude ramification at the generic point of the stratum,
i.e., that the fibers of $\varphi_{\Lambda}$ have zero dimensional tangent space.
Section~\ref{S:defcomp} is devoted to proving this.  
\end{proof}

We see these results as special cases of a more general statement
of Mukai \cite[\S 10]{Mukai96}:
\begin{quote}
$\varphi_{13}:\cP_{13} \ra \cK_{13}$ is birational onto its image.
\end{quote}
This might be approached via a degeneration argument as follows.
Consider the stable curve $B=C_1 \cup C_2 \cup R'$ as above, i.e.,
\begin{itemize}
\item{$\psi:C_1\ra \bP^1$ is tetragonal of genus five;}
\item{$R'\simeq \bP^1$ and is glued to $C_1$ at seven generic
points via $\psi$;}
\item{$C_2 \simeq \bP^1$, is disjoint from $R'$, and meets $C_1$
in three points on a generic fiber of $\psi$.}
\end{itemize}
Note that $B$ has arithmetic genus thirteen and is contained in the
image of $\varphi_{13}$, or more precisely, a suitable extension of $\varphi_{13}$
which we now describe.

Let $\ocP_g$ denote the irreducible component moduli stack of stable log pairs $(S,C)$
containing $\cP_g$.  There is an extension
$$\varphi_g:\ocP_g \ra \ocM_g$$
to the moduli space of stable curves.  Suppose there is a point $C_{\eta} \in \cM_g$
such that $C_{\eta} \subset S_{\eta},S'_{\eta}$, both K3 surfaces of genus $g$.
Suppose there is a specialization $C_{\eta} \leadsto C_0$ 
admitting a specialization $S_{\eta} \leadsto S_0$ to a K3 surface containing $C_0$.
Then there exists a specialization $S'_{\eta} \leadsto S'_0$ to a K3 surface 
containing $C_0$.  In other words, the part of $\ocP_g$ arising from K3 surfaces
has no `holes'.  The reason to expect this is that the K3 surfaces $S_{\eta}$ and
$S'_{\eta}$ ought to be isogenous, and this isogeny should also specialize, so
one cannot become singular without the other becoming singular.  

This would require additional argument, e.g., by identifying a situation
where we can analyze {\em a priori} the fibers of $\varphi_g$.
This might be possible with a suitable GIT construction of $\cP_{13}$.

\section{Deformation computations}
\label{S:defcomp}
\subsection{Generalities}
We review the formalism of deformations of pairs, following
\cite{Kawamata}.  For the moment, $S$ denotes a smooth projective 
variety and $D \subset S$ a reduced normal crossings divisor.  
We have exact sequences
$$0 \ra T_S(-D) \ra T_S\left<-D\right> \ra T_D \ra 0$$
and 
$$0 \ra T_S\left<-D\right> \ra T_S \ra N_{D/S} \ra 0,$$
where $T_S\left<-D\right>$ means vector fields on $S$ with
logarithmic zeros along $D$.  
The tangent space to the deformation space of $(S,D)$ is given by
$H^1(T_S\left<-D\right>)$.  The sequence
$$\Gamma(N_{D/S}) \ra H^1(T_S\left<-D\right>) \ra H^1(T_S)$$
may be interpreted `first-order deformations of $(S,D)$ leaving
$S$ unchanged arise from deformations of $D\subset S$'; the sequence
$$H^1(T_S(-D)) \ra H^1(T_S\left<-D\right>) \ra H^1(T_D)$$
means that we may interpret $H^1(T_S(-D))$ as `first-order deformations
of $(S,D)$ leaving $D$ unchanged'.  

Our situation is a bit more complicated as our boundary consists of
{\em two} log divisors deforming independently.  If $D=\{x=0\}$
and $R'=\{y=0\}$ meet transversally at $x=y=0$ then 
$T_S\left<-D\right>\left<-R'\right>$
is freely generated by $x\frac{\partial}{\partial x}$ and
$y\frac{\partial}{\partial y}.$
First-order deformations of $(S,D,R')$ are given by
$H^1(T_S\left<-D\right>\left<-R'\right>)$.

We analyze the ramification of the forgetting morphism $\varphi$ from the deformation space
of $(S,D,R)$ to the deformation space of $(D, D \cap R')$.
A slight variation on one of the standard exact sequences above
$$0 \ra T_S\left<-R\right>(-D) \ra T_S\left<-D\right>\left<-R'\right> \ra T_D\left<-R'\right> \ra 0$$
induces the differential
$$d\varphi:H^1(T_S\left<-D\right>\left<-R'\right>) \ra H^1(T_D\left<-D\cap R\right>).$$
Hence the kernel is given by $H^1(T_S\left<-R'\right>(-D))$, which is zero precisely
when $\phi$ is unramified.
The exact sequence
$$0 \ra T_S\left<-R'\right>(-D) \ra T_S(-D) \ra N_{R'/S}(-D\cap R') 
\ra 0$$
induces
$$
\begin{array}{r}
\Gamma(N_{R'/S}(-D\cap R')) \ra \\
 H^1(T_S\left<-R'\right>(-D)) 
\ra H^1(T_S(-D)) \stackrel{h}{\ra} H^1(N_{R'/S}(-D \cap R')).
\end{array}
$$
Thus $\varphi$ is unramified if $\Gamma(N_{R'/S}(-D\cap R'))=0$ and
$h$ is injective.  

\subsection{Our specific situation}
\label{SS:OSS}
We specify what the various objects are: First,
$$S=\{F_1=F_2=0 \} \subset \bP^1 \times \bP^3.$$
is a smooth complete intersection of forms of bidegree $(1,2)$.
The divisor 
$$D=\{L=0 \} \subset S$$
is a hyperplane section, i.e., $L$ is of bidegree $(1,1)$.  
The divisor $R'\subset S$ is a smooth rational curve of bidegree $(1,6)$,
meeting $D$ transversally, in seven points.  

Since $\deg(N_{R'/S}(-D \cap R'))=-2-7$ it has no global sections;
the higher cohomology is computed via Serre duality
$$H^1(N_{R'/S}(-D\cap R')=
\Gamma(\cO_{R'}(D\cap R'))^{\vee}=\Gamma(\cO_{\bP^1}(7))^{\vee}.$$
We claim that 
$H^1(T_S(-D))$ is eight dimensional.
\begin{rema}
$H^1(T_S(-D))$ is therefore eight dimensional when $S$
is a generic K3 surface of degree $12$.  This has a nice global
interpretation via the Mukai construction \cite{Mukai96}:  
$D$ (resp.~$S$) is a codimension nine (resp.~eight) linear section
of the orthogonal Grassmannian $\OGr(5,10)$, so the 
K3 surfaces containing a fixed $D$ are parametrized by $\bP^8$.
\end{rema} 
To prove the claim, use the normal bundle exact sequence
$$0 \ra T_S(-D) \ra T_{\bP^1 \times \bP^3}(-1,-1)|S \ra
 N_{S/\bP^1 \times \bP^3}(-1,-1) \ra 0,$$
the Koszul complex for $\{F_1,F_2\}$
$$0 \ra \cO_{\bP^1 \times \bP^3}(-2,-4) \ra 
\cO_{\bP^1 \times \bP^3}(-1,-2)^{\oplus 2} \ra
\cO_{\bP^1 \times \bP^3} \ra \cO_S \ra 0$$
and its twist by $T_{\bP^1 \times \bP^3}(-1,-1).$
Applying the Kunneth formula to compute the cohomologies of
the twists of $T_{\bP^1 \times \bP^3}$, we find that 
$$\Gamma(T_{\bP^1 \times \bP^3}(-1,-1)|S)=
H^1(T_{\bP^1 \times \bP^3}(-1,-1)|S)=0.$$
We also find that
$$\Gamma(N_{S/\bP^1 \times \bP^3}(-1,-1))=\Gamma(\cO_S(0,1))^{\oplus 2}
=\Gamma(\cO_{\bP^3}(1))^{\oplus 2},$$
which is isomorphic to $H^1(T_S(-D))$ by the vanishing above.

In concrete terms, 
the infinitesimal deformations corresponding
to $H^1(T_S(-D))$ take the form
\begin{equation} \label{eqn:Seps}
S(\epsilon)=\{F_1+\epsilon L G_1=F_2+\epsilon L G_2=0\}, \quad 
G_1,G_2 \in \Gamma(\cO_{\bP^3}(1)).
\end{equation}

The exact sequence above therefore takes the form
$$0  \ra H^1(T_S\left<-R'\right>(-D)) 
\ra \Gamma(\cO_{\bP^3}(1))^{\oplus 2} \stackrel{h}{\ra}
\Gamma(\cO_{\bP^1}(7)),
$$
where $h$ captures the obstructions to deforming
the rational curve along an infinitesimal deformation of $S$.
Precisely, suppose we start off with
\begin{itemize}
\item{$(\bP^1,r_1,\ldots,r_7)$ a pointed rational curve;}
\item{a K3 surface $S=\{F_1=F_2=0 \} \subset \bP^1 \times \bP^4$;}
\item{a nodal hyperplane section curve $D\subset S$ with
equation $L=0$;}
\item{a morphism $\iota:\bP^1 \ra S$ of bidegree $(1,6)$ with
$d_j=\iota(r_j) \in D$.}
\end{itemize}
Fix $\epsilon$ to be a small parameter, or in algebraic terms,
a nilpotent with $\epsilon^2=0$.  
We analyze first-order deformations of
\begin{itemize}
\item{the K3 surface $S(\epsilon)$ as in (\ref{eqn:Seps});}
\item{the morphism $\iota(\epsilon):\bP^1 \ra S_{\epsilon}$
satisfying the constraint
\begin{equation} \label{eqn:constraint}
d_j=\iota(\epsilon)(r_j).
\end{equation}}
\end{itemize}

Let $[t,u]$ be coordinates on $\bP^1$ and write the seven distinct points
$[r_j,1],j=1,\ldots,7$.  Let $P_1,\ldots,P_7 \in k[t,u]_6$ be
homogeneous polynomials of degree six such that $P_i(r_j,1)=\delta_{ij}$;
these form a basis of $k[t,u]_6$.   We factor $\iota$ as the composition of the $6$-uple
embedding
$$\begin{array}{rcl}
v: \bP^1 & \ra & \bP^6 \\
\ [t,u]   & \mapsto & [P_1,\ldots,P_7]
\end{array}
$$
and a linear projection given by the $4\times 7$ matrix
$$D=\left( \begin{matrix} 
d_{01} & d_{02} & \ldots & d_{07} \\
 d_{11} & d_{12} & \ldots & d_{17} \\
 d_{21} & d_{22} & \ldots & d_{27} \\
 d_{31} & d_{32} & \ldots & d_{37}  \end{matrix} \right),$$
whose $j$th column consists of the coordinates of $d_j$.  
The condition that $\iota(\bP^1) \subset S$ translates into
\begin{equation} \label{eqn:constant}
F_m(t,u;\sum_j d_{0j}P_j , \ldots,
\sum_j d_{3j}P_j)=0
\end{equation}
for $m=1,2$.  

The perturbation of the matrix $D$
entails rescaling each column of $D$ by a multiplicative scalar,
to first order.  This takes the form
$$D(\epsilon)=\left( d_{ij}(1+s_j\epsilon) \right),$$
keeping in mind that the case $s_1=s_2=\ldots=s_7$
induces a trivial deformation, i.e., rescaling $D$ by a constant.  
The condition (\ref{eqn:constraint}) is automatically satisfied.

We analyze the condition
$$\iota(\epsilon)(\bP^1) \subset S(\epsilon)$$
to first order.  This can be written
$$
\begin{array}{l}
F_m(t,u;\sum_j d_{0j}(1+\epsilon s_j) 
P_j, \ldots ) + \\
 \epsilon  (L\cdot G_m)(t,u;\sum_j d_{0j}(1+\epsilon s_j) P_j, \ldots)=0
\end{array}$$
for $m=1,2$.  
Writing $x_0,x_1,x_2,x_3$ for the homogeneous coordinates on $\bP^3$
and extracting the first derivative---the constant term in $\epsilon$
is zero by (\ref{eqn:constant})---we obtain
$$\sum_{k=0}^3 \frac{\partial F_m}{\partial x_k}
\cdot \sum_j d_{0j}s_jP_j
+
L\cdot G_m=0.$$
Here we should regard $\frac{\partial F_m}{\partial x_k}$ as 
a homogeneous form of degree $7$ in $\{t,u\}$, $L$ also
of degree $7$, and $G_i$ of degree $6$.  Thus we obtain 
two homogeneous forms of degree $13$ in $\{t,u\}$, with each
coefficient linear in the variables $s_1,\ldots,s_7$ 
(and vanishing where $s_1=\cdots=s_7$) and the
eight coefficients of $G_1$ and $G_2$.  
Note, however, that
$$F_m(t,u;d_j)=F_m(t,u;\iota(\epsilon)(r_j))=0,
\quad j=1,\ldots,7,$$
thus $F_m(t,u;\iota(\epsilon)([t,u]))$ is a multiple of 
$\prod_{i=1,\ldots,7} (t-r_iu)$; the same is
true for $L(\iota(\epsilon)([t,u]))$.  
Dividing out by this, we obtain two homogeneous forms of degree $6$
in $\{t,u\}$, with each coefficient linear in the variables;
thus we obtain $14$ linear equations in $14$ independent variables.  
Again, the equations will vanish if $s_1=\ldots=s_7$
so these depend on only six parameters.

\subsection{A unirationality result}
\label{SS:uni}
To execute the computations outlined in Section~\ref{SS:OSS},
we must evaluate all the relevant terms in a specific example.
In practice, finding such an example is much easier when the 
underlying parameter spaces are unirational.   

\begin{prop}
Consider the Hilbert scheme parametrizing the following data:
\begin{itemize}
\item{points $r_1,\ldots,r_7 \in \bP^1$}
\item{a rational curve $C_2 \subset \bP^1 \times \bP^3$
of bidegree $(0,1)$ (i.e., a line);}
\item{a hyperplane section $\{L=0\} \subset \bP^1 \times \bP^3$ containing
$C_2$;}
\item{a morphism $\iota:\bP^1 \ra \bP^1 \times \bP^3$ of bidegree
$(1,6)$ with $\iota(\{r_1,\ldots,r_7\}) \subset \{L=0\}$;}
\item{a K3 surface $S\subset \bP^1 \times \bP^3$, given as a complete
intersection of forms of degree degree $(1,2)$ containing
$C_2$ and $R':=\iota(\bP^1)$.}
\end{itemize}
This space is rational.
\end{prop}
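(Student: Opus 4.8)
The plan is to build the parameter space up one piece at a time, choosing at each stage a description in which the remaining data vary in a projective space or affine space over what has been constructed so far; a tower of such fibrations exhibits the total space as rational. First I would dispose of the points: the configuration $(r_1,\dots,r_7) \in (\bP^1)^7$ is an open subset of a product of projective lines, hence rational, and I will work over this base throughout. Next, a line $C_2 \subset \bP^1 \times \bP^3$ of bidegree $(0,1)$ is the same as a point $p \in \bP^1$ together with a line $\ell \subset \{p\}\times \bP^3 \cong \bP^3$; lines in $\bP^3$ form the Grassmannian $\Gr(2,4)$, which is rational, so the space of such $C_2$ is rational. A bidegree $(1,1)$ form $L$ on $\bP^1\times \bP^3$ that vanishes on $C_2$ is constrained linearly — containing the line $C_2$ imposes linear conditions on the $8$-dimensional space $\Gamma(\cO(1,1))$ — so the valid $L$'s (up to scale) form a projective space bundle over the previous base.

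The morphism $\iota$ I would handle using the explicit description from Section~\ref{SS:OSS}: once $r_1,\dots,r_7$ are fixed we have the fixed $6$-uple Veronese $v:\bP^1 \to \bP^6$, and $\iota$ of bidegree $(1,6)$ whose $\bP^3$-component kills none of the Veronese coordinates is, up to the torus rescaling the columns, a $4\times 7$ matrix $D$; together with the $\bP^1$-component (a degree-one map, i.e.\ essentially a coordinate choice) this says the space of such $\iota$ is an open subset of an affine/projective space. The constraint $\iota(\{r_1,\dots,r_7\}) \subset \{L=0\}$ is, for each $j$, a single linear condition on the $j$-th column of $D$ once $L$ is fixed, so the locus of admissible $\iota$ is again a linear subspace bundle (minus a discriminant) over the base built so far, hence rational over it.

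The last and most delicate step is the K3 surface $S$: we need a complete intersection of two $(1,2)$ forms $F_1,F_2$ on $\bP^1\times\bP^3$ containing both $C_2$ and $R'=\iota(\bP^1)$. The conditions "$F_m$ vanishes on $C_2$" and "$F_m$ vanishes on $R'$" are linear in $F_m \in \Gamma(\cO(1,2))$, which is $\binom{2}{1}\binom{5}{2}=20$-dimensional; containing the line $C_2$ (a $\bP^1$ of bidegree $(0,1)$) and the rational sextic $R'$ (bidegree $(1,6)$, so the restriction map to $\Gamma(\cO_{R'}(1,2))$ lands in a space of dimension $h^0(\cO_{\bP^1}(13))=14$) cuts out a linear subspace $W \subset \Gamma(\cO(1,2))$ whose dimension I must pin down. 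The hard part is checking that this dimension is constant — independent of the choices made earlier — with $\dim W \ge 2$, so that the generic pair $(F_1,F_2)$ spans a $2$-plane in $W$ and defines a K3 surface; equivalently, the space of such $S$ is an open subset of a Grassmannian bundle $\Gr(2,W)$ over the rational base, and $\Gr(2,N)$ is rational. To control $\dim W$ I would compute the dimension at the single explicit configuration produced by the degeneration $C \leadsto C_1 \cup C_2$ of Theorem~\ref{theo:specialize} (which is exactly the geometry being set up here), note that $\dim W$ is upper semicontinuous in the parameters, and argue it cannot jump upward on a generic point because the total space of the Hilbert scheme is known to have the expected dimension there; then $\Gr(2,W)$-rationality finishes the tower. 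The main obstacle is precisely this rank count for $W$ and the verification that the two $(1,2)$-forms so obtained actually cut out a smooth (or at worst ADE) K3 rather than something degenerate — but genericity of the marked points, together with the smoothness statement already established in Theorem~\ref{theo:specialize}, should close that gap.
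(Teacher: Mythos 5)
Your construction is essentially the paper's own proof: the same step-by-step tower in which the seven points, the line $C_2$ (an open subset of $\bP^1\times\Gr(2,4)$), the form $L$ (linear conditions), the section $\iota$ (seven linear conditions on the projective space of coefficient data, exactly as in Section~\ref{SS:OSS}), and finally the surface $S$ (the Grassmannian $\Gr(2,I_{C_2\cup R'}(1,2))$) are successively parametrized by projective or Grassmannian bundles over a rational base. The "main obstacle" you flag dissolves: rationality only concerns the generic point of the base, where upper semicontinuity makes the spaces $I_{C_2\cup R'}(1,2)$ into a vector bundle of rank at least $20-(3+14)=3$ (vanishing on $C_2$ and on $R'$ imposes at most $3$ and $14$ linear conditions on the $20$-dimensional space $\Gamma(\cO(1,2))$), while smoothness of the complete intersection is an open condition that is nonvacuous by Theorem~\ref{theo:specialize} or the explicit example of Section~\ref{SS:Example}.
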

Below, let $f$ be induced from the hyperplane class of $\bP^3$
and $C_1\cup C_2$ be cut out by $L=0$.  Note that $C_1$ is residual
to $C_2$ in the hyperplane $D=\{L=0\}$.  
\begin{coro}
Consider the moduli space of lattice polarized K3 surfaces $S$ of type
$$\begin{array}{r|cccc}
 & f & C_1 & C_2 & R' \\
\hline
f & 4 & 7 & 1 & 6 \\
C_1 & 7 & 8 & 3 & 7 \\
C_2 & 1 & 3 & -2 & 0 \\
R' & 6 & 7 & 0 & -2
\end{array},
$$
equipped with an ordering of the points of $C_1\cap R'$.
This space is unirational.  
\end{coro}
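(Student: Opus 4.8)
The plan is to deduce the corollary directly from the Proposition by exhibiting the moduli space of lattice-polarized K3 surfaces (with an ordering of $C_1 \cap R'$) as the image of the rational parameter space in the Proposition under a dominant rational map, so that unirationality is inherited. First I would recall that a K3 surface $S$ with the prescribed rank-four lattice polarization carries, by the Hodge-theoretic theory of lattice polarizations, finitely many ample models; the classes $f$, $C_1$, $C_2$, $R'$ are effective, with $C_2$ and $R'$ the classes of the two $(-2)$-curves and $f$ the class of an elliptic fibration. The identification $f = C - E$ from Section~\ref{S:defcomp}, together with the computation that $|f|$ gives an elliptic pencil of degree-$4$ multisections and $|C_1|$ (or a suitable multiple of $C_1+C_2$) gives a birational morphism to a quartic, is what lets us put $S$ inside $\bP^1 \times \bP^3$: the pencil $|f|$ supplies the map to $\bP^1$, and a line bundle of $f$-degree $1$ supplies the map to $\bP^3$, realizing $S$ as a $(1,2)$-$(1,2)$ complete intersection, with $C_2$ a line of bidegree $(0,1)$ and $R'$ a curve of bidegree $(1,6)$.

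Next I would organize the argument as follows. Starting from a point of the Hilbert scheme in the Proposition — the data $(r_1,\dots,r_7; C_2; \{L=0\}; \iota; S)$ — one reads off a lattice-polarized K3 surface: $S \subset \bP^1 \times \bP^3$ is the K3, $f$ is the class pulled back from $\bP^3$'s hyperplane (here I should be careful about which factor; in the Proposition $f$ is "induced from the hyperplane class of $\bP^3$", matching the corollary's convention where $f^2 = 4$), $C_1 \cup C_2 = \{L=0\}$ with $C_2$ the line and $C_1$ the residual curve, $R' = \iota(\bP^1)$, and the ordering of $C_1 \cap R'$ is induced by the ordering $r_1,\dots,r_7$ of the source $\bP^1$ of $\iota$ (the $r_j$ map into $\{L=0\}$, and generically they land on $C_1$, not $C_2$). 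One checks the intersection numbers agree with the stated matrix: $f^2=4$, $f\cdot C_2 = 1$ and $C_2^2 = -2$ since $C_2$ is a line of bidegree $(0,1)$; $f \cdot R' = 6$, $R'^2 = -2$ since $R'$ has bidegree $(1,6)$ and arithmetic genus zero; $C_1 = H - C_2$ where $H$ is the bidegree-$(1,1)$ hyperplane class restricted to $S$, and a direct computation with $H^2 = 2\cdot 4 + 2 = $ (work it out: $H$ has bidegree $(1,1)$, so $H^2 = 2\cdot(\text{bideg }(1,0)\cdot(0,1)) + \dots$) gives the remaining entries $C_1^2 = 8$, $C_1 \cdot C_2 = 3$, $C_1 \cdot R' = 7$, $C_1 \cdot f = 7$. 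This defines a morphism from (an open subset of) the Hilbert scheme of the Proposition to the ordered-points lattice-polarized moduli space.

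Then I would argue this morphism is dominant. For dimension count: the moduli of lattice-polarized K3 surfaces with a rank-$4$ polarization has dimension $20 - 4 = 16$; adding the ordering of seven points is a finite covering, so the target has dimension $16$. On the source side, a generic lattice-polarized K3 of this type does arise from such data — indeed the constructions in Theorems~\ref{theo:pentagonal}–\ref{theo:specialize} already place the generic such $S$ inside $\bP^1 \times V \subset \bP^1 \times \bP^6$ and hence, after the further genus-$7$ specialization, inside $\bP^1 \times \bP^3$ as a $(1,2)$-complete intersection containing the requisite line and rational curve, and the ordering of $C_1 \cap R'$ can be matched to an ordering of the $r_j$. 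Since the source is rational by the Proposition and surjects (dominantly) onto the target, the target is unirational. The main obstacle I anticipate is bookkeeping the change of basis and the several competing lattice presentations — verifying that the polarization lattice attached to the Hilbert-scheme data is exactly the one in the corollary (and not, say, a finite-index overlattice or a Gram-equivalent but differently-presented one), and confirming that the generic fiber of the resulting rational map is non-empty, i.e., that a \emph{generic} K3 with this lattice really does embed in $\bP^1 \times \bP^3$ with the $(-2)$-curves in the stated bidegrees rather than some degeneration; this is where one leans on the genericity statements already established in Theorems~\ref{theo:pentagonal} and~\ref{theo:specialize} and on the fact that the relevant linear systems ($|f|$ and the $f$-degree-one bundle) stay base-point free and of the expected dimension on the whole moduli space.
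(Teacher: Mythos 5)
Your proposal takes essentially the paper's own route: the Corollary is deduced from the Proposition by observing that the rational parameter space (seven ordered points, the line $C_2$, the hyperplane $\{L=0\}$, the map $\iota$, and the $(1,2)$--$(1,2)$ complete intersection $S$) dominates the moduli space of lattice-polarized surfaces equipped with an ordering of $C_1\cap R'$, and your intersection-number bookkeeping agrees with the paper's conventions. One slip worth correcting, though it does not affect the argument: the projection to $\bP^1$ comes from the elliptic pencil $|C_1+C_2-f|$, while $|f|$ (with $f^2=4$, induced from the hyperplane class of $\bP^3$) gives the quartic model in $\bP^3$, so $f$ is not an elliptic fibration class.
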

\begin{proof}
The construction is step-by-step:  Seven ordered points on $\bP^1$ are
parametrized by a rational variety.  The lines $C_2$ are as well, i.e.,
the product $\bP^1 \times \mathrm{Gr}(2,4)$.  For each such line,
it is a linear condition for a hypersurface $\{L=0\}$ to vanish along
the line.  Once we have $C_2$ and $\iota$, the K3 surfaces containing
$C_2 \cup R'$ are given by the Grassmannian $\mathrm{Gr}(2,I_{C_2\cup R'}(1,2))$.

Thus we only have to worry about the choice of $\iota$.  We interpret
this as a section of the projection $\bP^1 \times \bP^3 \ra \bP^1$,
given by a collection of sextic polynomials
$$[\iota_0(t,u),\ldots,\iota_3(t,u)],$$
parametrized by a dense open subset of the projective space $\bP^{27}$
on the coefficients.  
After a linear change of coordinates on $\bP^3$, we may assume 
$L=tx_0-ux_1$.  Then the conditions $\iota(r_j) \in \{L=0\}$ take the form
$$r_j\iota_0(r_j,1)-\iota_1(r_j,1),$$
i.e., seven linear equations on the coefficient space.  The resulting
parameter space is thus rational.
\end{proof}

\subsection{Concrete example}
\label{SS:Example}

We exhibit a specific example over $\bQ$ where the morphism $\varphi_{\Lambda}$ is unramified.
Its construction closely follows the unirationality proof in Section~\ref{SS:uni}.

The relevant data is
\begin{itemize}
\item{points $r_1=[0,1],r_2=[1,1],r_3=[-1,1],r_4=[2,1],r_5=[-2,1],r_6=[1/2,1],r_7=[-1/2,1] \in \bP^1$;}
\item{a rational curve $C_2=\{u=x_0=x_3=0\}$;}
\item{a hyperplane section $L=tx_0-ux_1 \subset \bP^1 \times \bP^3$, containing $C_2$;}
\item{a morphism $\iota:\bP^1 \ra \bP^1 \times \bP^3$ given by
$$\begin{array}{rcl}
x_0& =& 4t^6+t^5u+u^6\\
x_1&=&t^6+21t^5u-21t^3u^3+5tu^5 \\
x_2&=&t^6 \\
x_3&=&t^6+t^5u+t^4u^2+t^3u^3+t^2u^4+tu^5+u^6;
\end{array}
$$
} 
\item{a K3 surface $S\subset \bP^1 \times \bP^3$, given as a complete intersection 
$$\{F_1=F_2=0\}$$
where
$$\begin{array}{rcl}
F_1 &= & u(-36134306460 x_0^2 + 1648259021 x_0 x_1 + 179920405271 x_0 x_2 \\
& & + 72385436466 x_0 x_3
 + 49839426 x_1^2 - 3784378416 x_1 x_2\\
& & - 2345703360 x_1 x_3 - 181391061852 x_2^2 - 225811403454 x_2x_3\\
& & - 36251130006 x_3^2) + t(-13678895854 x_0^2 + 671675907 x_0 x_1 \\
& & + 56417839468 x_0x_2 - 8926222977 x_0x_3 + 26209164072x_3^2)
\end{array}
$$
and 
$$\begin{array}{rcl}
F_2 &=&
u(3638964 x_0^2 - 1272831 x_0 x_1 + 29670963 x_0x_2\\
& & - 13458270x_0 x_3 - 22974x_1^2 + 3555552 x_1 x_2\\
& & + 1904792 x_1x_3 - 114701748 x_2^2 - 4837990 x_2 x_3\\
& & + 9819306 x_3^2) + t(12731586 x_0^2 - 575505 x_0 x_1\\
& & -52280172 x_0x_2 - 22071733 x_0 x_3 + 96004264x_2x_3).
\end{array}
$$
}
\end{itemize}
The linear projection matrix is given by
$$D=\left( \begin{matrix}
1 & 6 & 4 & 289 & 225 & 35/32 & 33/32 \\
0 & 6 & -4 & 578 & -450 & 35/64 & -33/64 \\
0 & 1 & 1 & 64 & 64 & 1/64 & 1/64 \\
1 & 7 & 1 & 127 & 43 & 127/64 & 43/64
\end{matrix} \right).$$

For the tangent space computation, write
$$\begin{array}{rcl}
G_1&=&g_{10}x_0+g_{11}x_1+g_{12}x_2+g_{13}x_3 \\
G_2&=&g_{20}x_0+g_{21}x_1+g_{22}x_2+g_{23}x_3
\end{array}$$
and extract the terms linear in $\epsilon$ in
$$
F_m(t,u;\sum_j d_{0j}(1+\epsilon s_j) 
P_j, \ldots )+ \\
 \epsilon  (L\cdot G_m)(t,u;\sum_j d_{0j}(1+\epsilon s_j) P_j, \ldots)=0.
$$
This yields two homogeneous forms in $s$ and $u$ of degree $13$, with coefficients
linear in the $g_{ik}$ and $s_j$, each divisible by
$$\prod_j(t-r_ju)=t(t-u)(t+u)(t-2u)(t+2u)(t-u/2)(t+u/2).$$
Dividing out, we obtain two forms in $s$ and $u$ of degree $6$ with coefficients
linear in the $g_{ik}$ and $s_j$.  We will not reproduce these coefficients here,
but after Gaussian elimination we are left with the system
$$\begin{array}{c}
g_{10}=g_{11}=g_{12}=g_{13}=g_{20}=g_{21}=g_{22}=g_{23}=0 \\
s_1=s_2=s_3=s_4=s_5=s_6=s_7.
\end{array}
$$
Thus the space of infinitesimal deformations of $S$ containing our
curves is trivial, hence our morphism $\varphi_{\Lambda}$ is unramified at this point.

\section{A related construction}
\label{sect:related}
Consider the moduli space $\cF_{\Lambda''}$ of lattice-polarized K3 surfaces of type
\begin{equation} \label{eqn:LP49}
\Lambda'':=\begin{array}{r|cc}
        & h & R' \\
\hline
h   & 12 & 5 \\
R'  & 5 & -2 
\end{array}
\end{equation}
and the moduli space $\cP_{\Lambda''}$ parametrizing pairs $(S,D)$ with
$S\in \cF_{\Lambda''}$ and $D \in |h|$ smooth and transverse to $R'$,
which has dimension 25.  We consider
$$
\widehat{\varphi_{\Lambda''}}:\cP_{\Lambda''}  \ra (\cM_{7,5} \times \cM_{0,5})/\fS_5 
$$
to a variety of dimension $18+5+2=25$.
\begin{prop}
$\widehat{\varphi_{\Lambda''}}$ is birational.
\end{prop}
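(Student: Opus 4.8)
The plan is to exhibit $\widehat{\varphi_{\Lambda''}}$ as a birational modification of Mukai's genus-$7$ construction recalled in Section~\ref{S:GB}, exactly as Proposition~\ref{prop:GF} handles $\varphi_{\Lambda'}$, but now keeping track of the rational curve as well. Recall that a generic genus-seven curve $D$ arises uniquely (up to the action of $G$) as the linear section $\OGr(5,10)\cap \Pi_6$, and the genus-seven K3 surfaces containing $D$ are precisely the sections $\OGr(5,10)\cap\Pi_7$ with $\Pi_6\subset\Pi_7$; these are parametrized by the $\bP^8=\bP(\Pi_{15}/\Pi_6)$ of hyperplanes in $\bP^{15}$ through $\Pi_6$. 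In the lattice $\Lambda''$ the class $R'$ satisfies $R'^2=-2$, $h\cdot R'=5$; inside such a K3 surface $R'$ is a smooth rational curve which, by the adjunction-type computation, is embedded in $\bP^{15}$ as a rational normal quintic curve spanning a $\bP^5$. So the data of $(S,D,R')\in\cP_{\Lambda''}$ together with an ordering of $D\cap R'$ is equivalent to: the curve $D=\OGr\cap\Pi_6$, a rational normal quintic $R'\subset\OGr(5,10)$, and five marked points $r_1,\dots,r_5\in R'$ lying on $D$, with $\Pi_7=\mathrm{span}(D\cup R')$ and $S=\OGr\cap\Pi_7$.

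First I would set up the analog of Lemma~\ref{lemm:7pts}: the evaluation map
$$\ev^5:\cM_{0,5}(\OGr(5,10),5)\ra \OGr(5,10)^5$$
on the space of pointed maps of degree $5\ell$. Here $\dim\cM_{0,5}(\OGr,5)=\dim\OGr - K_{\OGr}\cdot 5\ell + 5 - 3 = 10 + 40 + 2 = 52$, which equals $\dim\OGr(5,10)^5 = 50$ plus $2$ — wait, the correct expected dimension is $\dim\OGr + (-K)\cdot(5\ell) + (5-3) = 10+40-3+5 = 52$, matching $50$ only after subtracting the $2$-dimensional automorphisms of the source once the points are used to rigidify; since $5\ge 3$ the stack $\cM_{0,5}(\OGr,5)$ has expected dimension $50$. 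So we expect $\ev^5$ to be generically finite, and I would prove dominance by the same degeneration-to-a-chain-of-lines argument: break the quintic rational curve into a chain $\ell_1\cup\cdots\cup\ell_5$ of general lines through general points $r_j(0)\in\ell_j$, use that pointed lines dominate $\OGr(5,10)$ (so $N_{\ell/\OGr}\simeq\cO^{\oplus3}\oplus\cO(1)^{\oplus6}$) to compute the normal bundle of the chain restricted to each component, and check $H^1$ vanishes after twisting down by one point on each component — the end components give $\cO^3\oplus\cO(1)^5\oplus\cO(2)$ twisted by $\cO(-1)$, the interior ones $\cO^3\oplus\cO(1)^4\oplus\cO(2)^2$ twisted by $\cO(-1)$, all with no higher cohomology. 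This gives a general rational normal quintic through five general points of $\OGr(5,10)$, hence surjectivity of $\ev^5$.

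Next I would run the construction both ways. Given a general pointed curve $(D;d_1,\dots,d_5)$ and a general pointed $(\bP^1;r_1,\dots,r_5)$ with the $\fS_5$-quotient identification, realize $D=\OGr\cap\Pi_6$ canonically, send $(\bP^1;r_j)\mapsto R'\subset\OGr$ by the above surjectivity so that $R'$ passes through the $d_j$ with the prescribed cross-ratio data, set $\Pi_7=\mathrm{span}(D\cup R')$ (generically $7$-dimensional by a dimension count) and $S=\OGr\cap\Pi_7$; this $S$ is a K3 surface with the lattice $\Lambda''$ by the same Lefschetz/parameter-count argument as in Proposition~\ref{prop:GF} — the source and target both have dimension $25$, so generic finiteness of $\widehat{\varphi_{\Lambda''}}$ follows once I know the fibers are nonempty and zero-dimensional. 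Conversely, from $(S,D,R')$ I recover $\Pi_7\supset\Pi_6$ and hence $(D;D\cap R')$ and $(R';D\cap R')$; the point is that $D$ determines $\Pi_6$ and the embedding of $D$ in $\OGr$ uniquely up to $G$, so the fiber of $\widehat{\varphi_{\Lambda''}}$ over a general point is the set of rational normal quintics $R'\subset\OGr$ through the five fixed points $d_j$ with the fixed source cross-ratios and with $\mathrm{span}(D\cup R')$ of dimension $7$. To get degree one — birationality rather than mere generic finiteness — I would show this fiber is a single point: the five fixed points $d_j\in\OGr$ together with the source marking impose $5\cdot\dim\OGr=50$ conditions on the $50$-dimensional space $\cM_{0,5}(\OGr,5)$, so $\ev^5$ is generically finite of some degree $N$, but moreover the quotient by reordering together with the constraint that the quintic's span meet $\Pi_6$ in $\Pi_6$ (it automatically does, since the $d_j$ span a subspace of $\Pi_6$) cuts this down, and I expect a monodromy/connectedness argument (or a direct degeneration to the chain-of-lines picture, where the analogous count is visibly $1$) to show $N=1$ after the $\fS_5$ quotient.

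The main obstacle is precisely this last step — upgrading generic finiteness to degree one. Unlike the situation of Theorem~\ref{theo:main7}, where a hands-on degeneration $C\leadsto C_1\cup C_2$ plus the quartic-surface lemma pins the degree, here I have no such auxiliary specialization built into the excerpt; the cleanest route is probably to compute $\deg(\ev^5)$ directly (an enumerative Gromov--Witten count on $\OGr(5,10)$, cf. the Appendix's computation of $\deg(\ev^7)=71$) and check it equals the order $|\fS_5|=120$ times $1$, i.e. that the $\fS_5$-cover $\cM_{0,5}(\OGr,5)\dashrightarrow\cM_{0,5}(\OGr,5)/\fS_5$ accounts for all of it — equivalently that a general fiber of $\ev^5$ is a single $\fS_5$-orbit. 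Establishing that the monodromy of $\ev^5$ is exactly $\fS_5$ (transitive on the marked points and nothing more) is the delicate point; the degeneration to a chain of lines, where the rational normal quintic through the five points degenerates to the unique broken chain with the prescribed combinatorics, should make the orbit structure transparent and thereby close the argument.
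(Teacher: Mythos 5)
There is a genuine gap, and it starts with a dimension count. For $m=5$ marked points the formula gives $\dim \cM_{0,5}(\OGr(5,10),5)=\dim \OGr(5,10)+\deg c_1\cdot 5+5-3=10+40+2=52$; the $-3$ already accounts for $\mathrm{Aut}(\bP^1)$, and with five marked points there are no further automorphisms to subtract, so your correction down to $50$ is not valid. Hence $\ev^5:\cM_{0,5}(\OGr(5,10),5)\ra \OGr(5,10)^5$ is \emph{not} generically finite: its general fiber is $2$-dimensional, and those two parameters are exactly the moduli of the five marked points on the source, i.e.\ a point of $\cM_{0,5}$. This is precisely why the target of $\widehat{\varphi_{\Lambda''}}$ carries the extra $\cM_{0,5}$ factor (making both sides $25$-dimensional). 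The correct enumerative problem for the fiber of $\widehat{\varphi_{\Lambda''}}$ fixes the five images \emph{and} the cross-ratio of the five source points, i.e.\ one must show
$\deg\bigl(\ev_1^*[\pt]\cdots\ev_5^*[\pt]\cdot\phi^*[\pt]\bigr)=1$
on $\oM_{0,5}(\OGr(5,10),5)$, where $\phi$ is the forgetful map to $\oM_{0,5}$. Your $\fS_5$ normalization is also a red herring: the orderings are carried consistently on both sides, so no division by $|\fS_5|=120$ enters the degree.

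This count of $1$ is the heart of the matter, and it is exactly the step you leave open (your ``main obstacle''), offering only a hoped-for monodromy argument or a large Gromov--Witten computation. The paper closes it by a concrete degeneration of the \emph{source} marked curve: specialize the point of $\oM_{0,5}$ to a three-component chain with two marked points on each end component and one on the middle. Three classical facts about $\OGr(5,10)$ --- there is no line through two general points (lines correspond to isotropic $3$-spaces, a $15$-dimensional family), the conics through two general points $\Lambda_1,\Lambda_2$ sweep out only the Schubert variety $\{\Lambda\supset\Xi\}$ with $\Xi=\Lambda_1\cap\Lambda_2$, and there is no conic through three general points --- force the end components to map to conics and the middle component to a line. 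A line through the two Schubert conditions and the point $\Lambda_3$ corresponds to an isotropic $3$-space $W$ with $W\subset\Xi^{\perp}\cap(\Xi')^{\perp}\cap\Lambda_3$, which is already $3$-dimensional for general $\Lambda_3$, so $W$ and hence the limiting stable map are unique, giving the desired degree $1$. Your chain-of-lines degeneration (the analogue of Lemma~\ref{lemm:7pts}) would at best give dominance of the relevant evaluation--forgetful map, which here is not even the statement needed; without the cross-ratio condition and the uniqueness argument, birationality of $\widehat{\varphi_{\Lambda''}}$ is not established.
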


\begin{proof}
We construct the inverse mapping.
Fix $(C,c_1,\ldots,c_5)$ a generic five-pointed curve of genus seven and
$(\bP^1,a_1,\ldots,a_5)$ a five-pointed curve of genus zero.  
Realize $C$ as
a linear section in $\OGr(5,10)$.  We seek a curve $R'$ arising as a
rational normal quintic curve $\varpi:\bP^1 \ra \OGr(5,10)$ 
with $\varpi(a_i)=c_i,
i=1,\ldots,5$.  The K3 surface $S$ would then arise as the intersection
$$\mathrm{span}(C\cup R')\cap \OGr(5,10).$$

The construction of $\varpi$ boils down to an enumerative
computation by Andrew Kresch:
\begin{lemm}
Given generic points $\Lambda_1,\cdots,\Lambda_5 \in \OGr(5,10)$,
and generic points $a_1,\ldots,a_5 \in \bP^1$, there exists
a unique morphism $\varpi:\bP^1 \ra \OGr(5,10)$ with image of degree five
such that $\varpi(a_i)=\Lambda_i$ for $i=1,\ldots,5$.
\end{lemm}
We have the tautological diagram
$$\begin{array}{ccc}
\oM_{0,5}(\OGr(5,10),5) & \stackrel{\ev^5}{\ra} &  \OGr(5,10)^5 \\
{\scriptstyle \phi} \downarrow \ &    &  \\
\oM_{0,5} &      &
\end{array}
$$
where the $\phi$ is the forgetting morphism and $\ev^5$ is
evaluation at all five marked points.
We seek to show that
$$\deg(\ev_1^*[\pt]\cdots \ev_5^*[\pt] \cdot \phi^*(\pt))=1.$$

We specialize the point in $\oM_{0,5}$ to the configuration of Figure~\ref{fig:FivePoints}.
\begin{figure} 
\includegraphics[scale=.7]{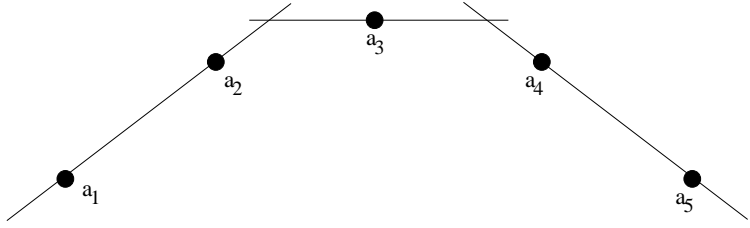}
\caption{A stable five-pointed curve}
\label{fig:FivePoints}
\end{figure}

We assert that 
\begin{enumerate}
\item
there is no line on $\OGr(5,10)$ through two general
points
\item the closure of the union of all conics through two
general points is a Schubert variety parametrizing spaces containing a
particular one-dimensional isotropic vector space $\Xi$ of the ambient
ten dimensional space.
\item there is no conic through three general points.
\end{enumerate}
For the first statement, note that the space of lines is parametrized
by three dimensional isotropic subspaces of the ambient ten dimensional space.
These are contained in the intersection of all the maximal
isotropic subspaces parametrized by the line \cite[Example 4.12]{LM}.
Thus the space of lines has dimension $15$, which is incompatible with
any two points containing a line.

For the second statement,
consider the conics containing $\Lambda_1$ and $\Lambda_2$;
set $\Xi:=\Lambda_1 \cap \Lambda_2$, which is one dimensional,
and note that $\Xi^{\perp}=\Lambda_1+\Lambda_2$.
The $\Lambda \in \OGr(5,10)$ satisfying
$$\Xi \subset \Lambda \subset \Xi^{\perp}$$
are parametrized by $\OGr(4,8)$.  Recall that $\OGr(4,8)$
is just a quadric sixfold by triality; the conics through
two generic points are parametrized by the $\bP^5$ parametrizing
planes containing those two points.  Altogether,
these trace out the full $\OGr(4,8)$ mentioned above.

The last statement follows immediately, as the union of the
conics through two points is a proper subvariety of $\OGr(5,10)$.

These observations imply
that the `end components' of our stable curve
map to conics and the `middle component' maps to a line.
So we are reduced to:
\begin{quote}
Given
$$\Xi = \Lambda_1 \cap \Lambda_2, \quad
\Xi'= \Lambda_4 \cap \Lambda_5$$
how many lines on $\OGr(5,10)$ are incident to
\begin{itemize}
\item
the Schubert variety of $\Lambda  \in \OGr(5,10)$ satisfying $\Xi \subset \Lambda$;
\item
the Schubert variety of $\Lambda \in \OGr(5,10)$ satisfying $\Xi' \subset \Lambda$;
\item
the point $\Lambda_3$?
\end{itemize}
\end{quote}

We know that lines on $\OGr(5,10)$ are in bijection with isotropic three dimensional
spaces, so for an isotropic three dimensional $W$ we must have:
\begin{enumerate}
\item[(a)]
$W$ can be extended to a maximal isotropic space containing $\Xi$;
\item[(b)]
$W$ can be extended to a maximal isotropic space containing $\Xi'$;
\item[(c)]
$W \subset \Lambda_3$.
\end{enumerate}

We deduce from (a) that $W \subset \Xi^{\perp}$ and from (b) that $W \subset (\Xi')^{\perp}$.
It follows that
$$W \subset \Xi^{\perp} \cap (\Xi')^{\perp}  \cap \Lambda_3.$$
Since $\Lambda_3$
is general, this intersection is already three dimensional so that $W$ is uniquely
determined and the answer to our enumerative problem is $1$.
\end{proof}

\bibliographystyle{alpha}
\bibliography{embeddings}

\begin{thebibliography}{dJHS11}

\bibitem[AK03]{AK}
Carolina Araujo and J{\'a}nos Koll{\'a}r.
\newblock Rational curves on varieties.
\newblock In {\em Higher dimensional varieties and rational points ({B}udapest,
  2001)}, volume~12 of {\em Bolyai Soc. Math. Stud.}, pages 13--68. Springer,
  Berlin, 2003.

\bibitem[Deg89]{Deg}
A.~I. Degtyar{\"e}v.
\newblock Classification of quartic surfaces that have a nonsimple singular
  point.
\newblock {\em Izv. Akad. Nauk SSSR Ser. Mat.}, 53(6):1269--1290, 1337--1338,
  1989.

\bibitem[dJHS11]{dJHS}
A.~J. de~Jong, Xuhua He, and Jason~Michael Starr.
\newblock Families of rationally simply connected varieties over surfaces and
  torsors for semisimple groups.
\newblock {\em Publ. Math. Inst. Hautes \'Etudes Sci.}, (114):1--85, 2011.

\bibitem[GL87]{GL}
Mark Green and Robert Lazarsfeld.
\newblock Special divisors on curves on a {$K3$} surface.
\newblock {\em Invent. Math.}, 89(2):357--370, 1987.

\bibitem[HT12]{HT12}
Brendan Hassett and Yuri Tschinkel.
\newblock Quartic del {P}ezzo surfaces over function fields of curves, 2012.
\newblock Preprint.

\bibitem[IN04]{Ishii}
Yuji Ishii and Noboru Nakayama.
\newblock Classification of normal quartic surfaces with irrational
  singularities.
\newblock {\em J. Math. Soc. Japan}, 56(3):941--965, 2004.

\bibitem[I{\v{S}}79]{Isk}
V.~A. Iskovskih and V.~V. {\v{S}}okurov.
\newblock Biregular theory of {F}ano {$3$}-folds.
\newblock In {\em Algebraic geometry ({P}roc. {S}ummer {M}eeting, {U}niv.
  {C}openhagen, {C}openhagen, 1978)}, volume 732 of {\em Lecture Notes in
  Math.}, pages 171--182. Springer, Berlin, 1979.

\bibitem[Kaw78]{Kawamata}
Yujiro Kawamata.
\newblock On deformations of compactifiable complex manifolds.
\newblock {\em Math. Ann.}, 235(3):247--265, 1978.

\bibitem[Muk87]{Mukai87}
S.~Mukai.
\newblock On the moduli space of bundles on {$K3$} surfaces. {I}.
\newblock In {\em Vector bundles on algebraic varieties ({B}ombay, 1984)},
  volume~11 of {\em Tata Inst. Fund. Res. Stud. Math.}, pages 341--413. Tata
  Inst. Fund. Res., Bombay, 1987.

\bibitem[Muk95]{MukaiAJM}
Shigeru Mukai.
\newblock Curves and symmetric spaces. {I}.
\newblock {\em Amer. J. Math.}, 117(6):1627--1644, 1995.

\bibitem[Muk96]{Mukai96}
Shigeru Mukai.
\newblock Curves and {$K3$} surfaces of genus eleven.
\newblock In {\em Moduli of vector bundles ({S}anda, 1994; {K}yoto, 1994)},
  volume 179 of {\em Lecture Notes in Pure and Appl. Math.}, pages 189--197.
  Dekker, New York, 1996.

\bibitem[Muk01]{Mukai01}
Shigeru Mukai.
\newblock Non-abelian {B}rill-{N}oether theory and {F}ano 3-folds [translation
  of {S}\=ugaku {\bf 49} (1997), no. 1, 1--24; ].
\newblock {\em Sugaku Expositions}, 14(2):125--153, 2001.
\newblock Sugaku Expositions.

\bibitem[Sha81]{JShah}
Jayant Shah.
\newblock Degenerations of {$K3$} surfaces of degree {$4$}.
\newblock {\em Trans. Amer. Math. Soc.}, 263(2):271--308, 1981.

\bibitem[Ura86]{Urabe}
Tohsuke Urabe.
\newblock Classification of nonnormal quartic surfaces.
\newblock {\em Tokyo J. Math.}, 9(2):265--295, 1986.

\end{thebibliography}


\begin{thebibliography}{BGG73}

\def\cprime{$'$}

\bibitem[BGG73]{BGG}
I.~N. Bern{\v{s}}te{\u\i}n, I.~M. Gel{\cprime}fand, and S.~I. Gel{\cprime}fand.
\newblock Schubert cells, and the cohomology of the spaces {$G/P$}.
\newblock {\em Uspehi Mat. Nauk}, 28(3(171)):3--26, 1973.
\newblock English transl.: Russian Math. Surveys 28 (1973), no. 3, 1--26.

\bibitem[BH95]{BH}
S. Billey and M. Haiman.
\newblock Schubert polynomials for the classical groups.
\newblock {\em J. Amer. Math. Soc.}, 8(2):443--482, 1995.

\bibitem[BM04]{BM04}
A. Bayer and Y.~I. Manin.
\newblock ({S}emi)simple exercises in quantum cohomology.
\newblock In {\em The {F}ano {C}onference}, pages 143--173. Univ. Torino,
  Turin, 2004.

\bibitem[CMP10]{CMP10}
P.~E. Chaput, L.~Manivel, and N.~Perrin.
\newblock Quantum cohomology of minuscule homogeneous spaces {III}.
  {S}emi-simplicity and consequences.
\newblock {\em Canad. J. Math.}, 62(6):1246--1263, 2010.


\bibitem[Dem74]{demazure}
M. Demazure.
\newblock D\'esingularisation des vari\'et\'es de {S}chubert
  g\'en\'eralis\'ees.
\newblock {\em Ann. Sci. \'Ecole Norm. Sup. (4)}, 7:53--88, 1974.
\newblock Collection of articles dedicated to Henri Cartan on the occasion of
  his 70th birthday, I.

\bibitem[FP97]{FP}
W.~Fulton and R.~Pandharipande.
\newblock Notes on stable maps and quantum cohomology.
\newblock In {\em Algebraic geometry---{S}anta {C}ruz 1995}, volume~62 of {\em
  Proc. Sympos. Pure Math.}, pages 45--96. Amer. Math. Soc., Providence, RI,
  1997.



\bibitem[HB86]{HB}
H. Hiller and B. Boe.
\newblock Pieri formula for {${\rm SO}_{2n+1}/{\rm U}_n$} and {${\rm Sp}_n/{\rm
  U}_n$}.
\newblock {\em Adv. in Math.}, 62(1):49--67, 1986.


\bibitem[KM94]{KM}
M.~Kontsevich and Yu. Manin.
\newblock Gromov-{W}itten classes, quantum cohomology, and enumerative
  geometry.
\newblock {\em Comm. Math. Phys.}, 164(3):525--562, 1994.

\bibitem[KT04]{KT}
A. Kresch and H. Tamvakis.
\newblock Quantum cohomology of orthogonal {G}rassmannians.
\newblock {\em Compos. Math.}, 140(2):482--500, 2004.

\bibitem[LM03]{LM}
J.~M. Landsberg and L. Manivel.
\newblock On the projective geometry of rational homogeneous varieties.
\newblock {\em Comment. Math. Helv.}, 78(1):65--100, 2003.

\bibitem[LM11]{lopez}
A. L{\'o}pez~Mart{\'{\i}}n.
\newblock Gromov-{W}itten invariants and rational curves on {G}rassmannians.
\newblock {\em J. Geom. Phys.}, 61(1):213--216, 2011.

\bibitem[Mas11]{Mas}
T. Maszczyk.
\newblock Computing genus zero {G}romov-{W}itten invariants of {F}ano
  varieties.
\newblock {\em J. Geom. Phys.}, 61(6):1079--1092, 2011.




\end{thebibliography}

\newpage

\begin{center}
{\Large Appendix}
\end{center}

\

\begin{center}
\noindent
{\Large{71 rational septics through 7 general points on $\OGr(5,10)$}}
\end{center}

\

\medskip

\section*{Overview and generalities}
The orthogonal Grassmannian $\OG(n,2n)$ is, by definition,
one component of the space of
$n$-dimensional subspaces of $V=\C^{2n}$, isotropic for a given nondegenerate
symmetric bilinear form on $V$.
By convention we fix the standard bilinear form, for which
$\langle v,w\rangle=\sum_{i=1}^{2n} v_i w_{2n+1-i}$,
and we take $\OG=\OGr(n,2n)$ to be the component containing
$\langle e_1,\ldots,e_n\rangle$.
Then $\OG$ is a homogeneous projective variety of dimension $n(n-1)/2$,
with $\deg c_1(\OG)=2(n-1)$.
It follows that the space of rational degree $d$ curves has dimension
$n(n-1)/2-3+2(n-1)d$,
so for $n=5$ and $d=7$ this is $63=7\cdot 9$ and we expect a finite number
of rational septics to pass through $7$ general points.

It is known (cf.\ \cite{FP,lopez}) that the number of
rational curves on $\OG$ satisfying incidence conditions imposed by
Schubert varieties of codimension $\geq 2$ in general position
is equal to the corresponding Gromov-Witten invariant:
$$
\#\left\{ 
\begin{array}{l}\text{degree $d$ rational curves through} \\
\text{general translates of $X_{\lambda^1}$, $\dots$, $X_{\lambda^m}$}  
\end{array}
\right\} = I_d([X_{\lambda^1}],\ldots,[X_{\lambda^m}])
$$
for $|\lambda^i|\geq 2$, $\sum |\lambda^i|=n(n-1)/2-3+2(n-1)d+m$, where
$I_d([X_{\lambda^1}],\ldots,[X_{\lambda^m}])$ denotes the Gromov-Witten invariant
\[\int_{\overline{M}_{0,m}(\OG,d)} \mathrm{ev}_1^*[X_{\lambda^1}]\cdots
\mathrm{ev}_m^*[X_{\lambda^m}].\]
The space $\overline{M}_{0,m}(\OG,d)$ is Kontsevich's moduli space of
stable maps of genus zero $m$-marked curves to $\OG$ in degree $d$
(see \cite{KM}), and it comes with $m$ evaluation maps (at the marked points)
$\mathrm{ev}_1$, $\dots$, $\mathrm{ev}_m$ to $\OG$.
The Schubert varieties in $\OG$ are denoted $X_\lambda$, indexed by
strict partitions $\lambda$ whose parts are $<n$.
(A partition is called strict if has no repeated parts.)
The codimension of $X_\lambda$ is equal to $|\lambda|$, the sum of the parts
of $\lambda$.
The article \cite{KT}, which includes a determination of the
$m=3$ invariants, gives a geometric description of
$\OG$ including the Schubert varieties.

\section*{Line numbers}

The space of lines on $\OG$ is known;
see \cite[Example 4.12]{LM}.
It is itself a projective homogeneous variety,
the space $\OGr(n-2,2n)$ of isotropic $(n-2)$-dimensional subspaces of $V=\C^{2n}$.
Therefore the computation of the
$I_1([X_{\lambda^1}],\ldots,[X_{\lambda^m}])$ reduces to the problem
of computing intersection numbers on this homogeneous variety.

There is a well-developed theory using divided difference
operators on polynomials
for performing computations in the cohomology rings of
projective homogeneous varieties of linear algebraic groups,
due to Bernstein, Gelfand, and Gelfand \cite{BGG}
and Demazure \cite{demazure}.
In the setting of the orthogonal flag variety
$\mathrm{OF}(2n)$, parametrizing a space in $\OG$ together with a complete flag of
subspaces, this has been worked out explicitly by
Billey and Haiman \cite{BH}.
It leads to an explicit formula for the Gromov-Witten invariants counting
lines on $\OG$ satisfying incidence conditions with respect to
Schubert varieties in general position.
The formula uses the Schur $P$-polynomials
$P_\lambda=P_\lambda(X)$ indexed by strict partitions $\lambda$,
which form a $\Q$-basis for the ring
$\Q[p_1,p_3,\ldots]$ generated by the odd power sums
$p_k=p_k(X)=x_1^k+x_2^k+\ldots$
(cf.\ Proposition 3.1 of op.\ cit.).
Following op.\ cit.,
to these we associate polynomials in $z_1$, $\dots$, $z_n$, which we will denote
by $P_\lambda(z_1,\ldots,z_n)$, by sending $p_k(X)$
to $-(1/2)(z_1^k+\cdots+z_n^k)$.

\begin{propo}
Introduce the divided difference operators on $\Q[z_1,\ldots,z_n]$:
$$
\begin{array}{cl}
\partial_if&=
\frac{f(z_1,\ldots,z_n)-f(z_1,\ldots,z_{i+1},z_i,\ldots,z_n)}{z_i-z_{i+1}},\\
\partial_{\hat 1} f&=
\frac{f(z_1,\ldots,z_n)-f(-z_2,-z_1,z_3,\ldots,z_n)}{-z_1-z_2},
\end{array}
$$
and for $i\leq j$ let $\partial_{i\ldots j}$ denote
$\partial_i\partial_{i+1}\ldots\partial_j$ and let
$\partial_{j\ldots i}$ denote
$\partial_j\ldots\partial_i$.
Then for any $m$ and $\lambda^1$, $\dots$, $\lambda^m$ satisfying
$|\lambda^i|\geq 2$, $\sum |\lambda^i|=n(n-1)/2-3+2(n-1)+m$,
if we set
\[F=\prod_{i=1}^m \partial_{\hat 1} P_{\lambda^i}(z_1,\ldots,z_n)\]
with the above convention on $P$-polynomials,
then we have
\begin{align*}
I_1&([X_{\lambda^1}],\ldots,[X_{\lambda^m}])=\\
&
\begin{cases}
\partial_{2\ldots n-1}
\partial_{1\ldots n-2}
\partial_{\hat 1}
\cdots
\partial_{2\ldots 3}
\partial_{1\ldots 2}
\partial_{\hat 1}
\partial_{n-2\ldots 1}
\partial_{n-1\ldots 2}F& \text{if $n$ is even},\\
\partial_{2\ldots n-1}
\partial_{\hat 1}
\partial_{2\ldots n-2}
\partial_{1\ldots n-3}
\partial_{\hat 1}
\cdots
\partial_{2\ldots 3}
\partial_{1\ldots 2}
\partial_{\hat 1}
\partial_{n-2\ldots 1}
\partial_{n-1\ldots 2}F& \text{if $n$ is odd},
\end{cases}
\end{align*}
where the $\cdots$ stand for compositions of operators in which the
upper limits of the indices are successively decreased by $2$.
\end{propo}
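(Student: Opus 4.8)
The plan is to reinterpret $I_1([X_{\lambda^1}],\ldots,[X_{\lambda^m}])$ as an honest intersection number on the Fano variety of lines of $\OG$, and then to evaluate that number inside the divided-difference calculus of Billey and Haiman \cite{BH}. By the results quoted above (lines on a homogeneous space through incidence conditions of codimension $\geq 2$ in general position are counted, without excess, by the Gromov--Witten invariant), $I_1$ equals $\int_{\mathbb F}\sigma_{\lambda^1}\cdots\sigma_{\lambda^m}$, where $\mathbb F=\OGr(n-2,2n)$ is the space of lines and $\sigma_\lambda\in H^*(\mathbb F)$ is the class of lines incident to $X_\lambda$. Realizing everything as homogeneous spaces of $G=\mathrm{Spin}_{2n}$, one has $\OG=G/P$ with $P$ the maximal parabolic omitting a spinor node, $\mathbb F=G/Q$ with $Q$ the maximal parabolic omitting the node $n-2$, and the incidence variety $\mathcal I=\{(W,\Lambda):W\subset\Lambda\}=G/(P\cap Q)$, with projections $q\colon\mathcal I\to\OG$ and $p\colon\mathcal I\to\mathbb F$. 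The fibre of $p$ over $W$ consists of the maximal isotropics between $W$ and $W^{\perp}$; since $W^{\perp}/W$ is a $4$-dimensional quadratic space, this fibre is a $\bP^1$, so $p$ is a $\bP^1$-bundle and $\sigma_\lambda=p_*q^*[X_\lambda]$, with $p_*$ the fibre integration attached to the spinor node.

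Next I would pass to polynomials. By \cite{BH}, Schubert classes on $\OG$, pulled back to the orthogonal flag variety, are represented by the Schur $P$-polynomials $P_\lambda(z_1,\ldots,z_n)$ under the stated substitution, and these representatives are invariant under $s_1,\ldots,s_{n-1}$. Since $q^*$ is the identity on these polynomial representatives and $p_*$ is the divided-difference operator $\partial_{\hat 1}$ of the spinor node, $\sigma_\lambda$ is represented by $\partial_{\hat 1}P_\lambda$; using that $s_{\hat 1}$ commutes with $s_1$ and with $s_3,\ldots,s_{n-1}$, and that $\partial_{\hat 1}^2=0$, one checks that $\partial_{\hat 1}P_\lambda$ is $W_Q$-invariant, hence a legitimate representative on $\mathbb F$. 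As cup products of Schubert classes are computed by multiplying representatives modulo the ideal of positive-degree $W$-invariants, the product $\sigma_{\lambda^1}\cdots\sigma_{\lambda^m}$ is represented by $F=\prod_i\partial_{\hat 1}P_{\lambda^i}$, a $W_Q$-invariant polynomial whose degree $\sum_i(|\lambda^i|-1)$ equals $\dim\mathbb F$.

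It remains to apply the degree map of $\mathbb F$ to a class of top degree. For a flag variety $G/Q$ this map is realized by the divided-difference operator $\partial_{w^Q}$, where $w^Q$ is the longest minimal coset representative of $W/W_Q$ (of length $\dim\mathbb F$), followed by extracting the constant term; applied to a polynomial of degree exactly $\dim\mathbb F$ this returns the intersection number. The content of the Proposition is then the identification of a convenient reduced expression for $w^Q$: deleting the node $n-2$ from the $D_n$ Dynkin diagram leaves a Levi of type $A_{n-3}\times A_1\times A_1$, and the displayed string of blocks $\partial_{2\ldots j},\partial_{1\ldots j-1}$ (with $j$ decreasing by $2$), interlaced with $\partial_{\hat 1}$ and closed off by $\partial_{n-2\ldots 1}\partial_{n-1\ldots 2}$, is a reduced word for $w^Q$ in the labelling that the operators $\partial_i,\partial_{\hat 1}$ impose. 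The two cases reflect the parity behaviour of the two spinor nodes under the longest element of $W(D_n)$, which forces a different placement of a spinor reflection when $n$ is odd. Feeding $F$ into this operator yields a constant, namely $I_1([X_{\lambda^1}],\ldots,[X_{\lambda^m}])$.

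The main obstacle is this last step: writing down and verifying the reduced word for $w^Q$ in $W(D_n)$, keeping careful track of the non-Bourbaki labelling of the two spinor nodes forced by the shapes of $\partial_1$ and $\partial_{\hat 1}$, checking reducedness (equivalently, that the block lengths sum to $\dim\mathbb F$, which one confirms case by case in $n$), and isolating the parity dichotomy; one must also make sure that $p_*q^*$ is exactly the single operator $\partial_{\hat 1}$ in Billey--Haiman's conventions and not a conjugate of it. By contrast the reduction to a line count and the product/degree formalism in the Borel presentation are routine once these conventions are pinned down.
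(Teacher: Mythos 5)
Your strategy is the same as the paper's: interpret $I_1([X_{\lambda^1}],\ldots,[X_{\lambda^m}])$ as a classical intersection number on the space of lines $\OGr(n-2,2n)$, represent the class of lines incident to $X_\lambda$ by $\partial_{\hat 1}P_\lambda(z_1,\ldots,z_n)$ in the Borel presentation, multiply representatives, and evaluate the resulting top-degree polynomial by a composition of divided-difference operators realizing the degree map. The scaffolding you supply is sound: the fibre of the incidence correspondence over an isotropic $(n-2)$-plane is indeed a single ruling of a quadric surface, hence $\bP^1$; $\partial_{\hat 1}P_\lambda$ is invariant under $s_1,s_{\hat 1},s_3,\ldots,s_{n-1}$ because $s_{\hat 1}$ commutes with those reflections; and for a $W_Q$-invariant representative of degree exactly $\ell(w^Q)=\dim\OGr(n-2,2n)=n(n-1)/2+2n-5$ the operator $\partial_{w^Q}$ does compute the integral over $G/Q$ (and kills the ideal of positive-degree $W$-invariants in that degree, so the answer is well defined). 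The hypothesis $\sum|\lambda^i|=n(n-1)/2-3+2(n-1)+m$ is exactly what makes $F$ have this degree.

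The problem is the step you explicitly defer as ``the main obstacle'': you assert, but do not verify, that the displayed composition --- with its blocks $\partial_{2\ldots j}\partial_{1\ldots j-1}$ interlaced with $\partial_{\hat 1}$, closed off by $\partial_{n-2\ldots 1}\partial_{n-1\ldots 2}$, and with the different placement of $\partial_{\hat 1}$ according to the parity of $n$ --- is a reduced word for $w^Q$, equivalently that it sends the polynomial representing the class of a point of $\OGr(n-2,2n)$ to $1$. Everything else in your outline is standard homogeneous-space formalism; this identification (together with pinning down that $p_*q^*$ is literally $\partial_{\hat 1}$ in Billey--Haiman's conventions, which you also flag rather than check) is the entire specific content of the Proposition, so as written your argument reduces the statement to an unproved combinatorial claim rather than proving it. A length count does come out right (for instance $15$ operators when $n=5$, and $(n-2)(n+5)/2$ in general), but that only shows the word has the correct length, not that it is reduced or that it represents $w^Q$; the parity dichotomy in particular is exactly where a heuristic about the spinor nodes under $w_0$ must be replaced by an actual check. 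The paper closes this point by deriving both facts --- that $\partial_{\hat 1}P_\lambda$ represents the incidence class and that the displayed composition sends the point class to $1$ --- from Theorem 4 of Billey--Haiman, adding only the observation that the computation may be carried out in $\Q[z_1,\ldots,z_n]$ rather than in infinitely many variables; note that your finite-variable formulation quietly relies on that same reduction.
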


\begin{proof}
According to Theorem 4 of op.\ cit., if we work with countably many $z$
variables and follow the above convention for associating a symmetric polynomial
in these to a $P$-polynomial $P_\lambda=P_\lambda(X)$, then
$\partial_{\bar 1}P_\lambda$
represents the cycle class of the space of lines incident to $X_\lambda$, and
the displayed composition of divided operators sends the
polynomial representing the class of a point on the space of
lines on $\OG$ to $1$.
So the proposition follows from the observation
that the computation may be performed in the polynomial ring
$\Q[z_1,\ldots,z_n]$.
\end{proof}

When $n=5$, there are $1071$ Gromov-Witten numbers
$I_1([X_{\lambda^1}],\ldots,[X_{\lambda^m}])$, which we take as known
in what follows.

\begin{exam}
One of these numbers counts the number of lines incident to
$15$ general translates of $X_2$
(the codimension-$2$ Schubert variety
of spaces in $\OG$ meeting a given isotropic $3$-dimensional space
nontrivially).
We have $P_2(X)=p_1^2(X)$ sent to $(1/4)(z_1+\cdots+z_5)^2$, which
upon applying $\partial_{\hat 1}$ yields
$-z_3-z_4-z_5$.
We evaluate
\[
\partial_2\partial_3\partial_4
\partial_{\hat 1}
\partial_2\partial_3\partial_1\partial_2
\partial_{\hat 1}
\partial_3\partial_2\partial_1\partial_4\partial_3\partial_2
(-z_3-z_4-z_5)^{15}
\]
and find
\[
I_1([X_2],\ldots,[X_2])=240240.
\]
\end{exam}

We list a few more such numbers:
\begin{equation}
\label{eqn:somelinenumbers}
\begin{aligned}
I_1([X_2],[X_3],[X_{421}],[X_{421}])&=2,&
I_1([X_2],[X_{21}],[X_{421}],[X_{421}])&=2,\\
I_1([X_2],[X_{42}],[X_{4321}])&=1,&
I_1([X_2],[X_{321}],[X_{4321}])&=0,\\
I_1([X_2],[X_{421}],[X_{432}])&=1,&
I_1([X_3],[X_{421}],[X_{431}])&=1,\\
I_1([X_{21}],[X_{421}],[X_{431}])&=1,&
I_1([X_4],[X_{421}],[X_{421}])&=0,\\
I_1([X_{31}],[X_{421}],[X_{421}])&=1,&
I_1([X_{43}],[X_{4321}])&=1,\\
I_1([X_{421}],[X_{4321}])&=0,&
I_1([X_{431}],[X_{432}])&=1.
\end{aligned}
\end{equation}

\section*{Conic numbers}

The \emph{associativity relations} of quantum cohomology
(also known as WDVV equations) are a system of polynomial relations in
Gromov-Witten invariants, which can be used to deduce new invariants from
known ones.
We recall the statement, as formulated in \cite[Eqn.\ (3.3)]{KM},
for the case of $\OG$.
First, the Poincar\'e duality involution $\lambda\mapsto \lambda^\vee$
on the set of partitions indexing the Schubert classes of $\OG$
(basis of the classical cohomology ring of $\OG$), is such that
the set of parts of $\lambda^\vee$ is the complement in
$\{1,\ldots,n-1\}$ of the set of parts of $\lambda$.
We have focused on Gromov-Witten invariants involving
Schubert classes of codimension $\geq 2$ above, because the ones
with fundamental or divisor classes reduce to these by the following
identities:
\begin{gather*}
I_0([X_\lambda],[X_\mu],[X_\nu])=\int_{\OG} [X_\lambda]\cdot [X_\mu]\cdot [X _\nu], \\
I_0([X_{\lambda^1}],\ldots,[X_{\lambda^m}])=0\text{ for $m\ne 3$},
\end{gather*}
and for $d\geq 1$,
\[
I_d([X_{\lambda^1}],\ldots,[X_{\lambda^m}],[X_1])=
dI_d([X_{\lambda^1}],\ldots,[X_{\lambda^m}]).
\]

Since it is needed for the discussion that follows, we record
in Table \ref{table:OGmult} a portion of the
multiplication table for the Schubert classes $\tau_\lambda=[X_\lambda]$
in the classical cohomology ring.
(One can produce this, e.g., using the Pieri formula of \cite{HB}.)

\begin{table}
\[
\begin{array}{c|cccccccc}
&\tau_4&\tau_{31}&\tau_{41}&\tau_{32}&\tau_{42}&\tau_{321}&\tau_{43}&\tau_{421}\\
\hline
\tau_1&\tau_{41}&\tau_{41}+\tau_{32}&\tau_{42}&\tau_{42}+\tau_{321}&\tau_{43}+\tau_{421}&\tau_{421}&
\tau_{431}&\tau_{431}\\
\tau_2&\tau_{42}&2\tau_{42}+\tau_{321}&\tau_{43}+\tau_{421}&\tau_{43}+2\tau_{421}&2\tau_{431}&\tau_{431}&
\tau_{432}&\tau_{432}\\
\tau_3&\tau_{43}&\tau_{43}+2\tau_{421}&\tau_{431}&2\tau_{431}&\tau_{432}&\tau_{432}&0&\tau_{4321}\\
\tau_{21}&\tau_{421}&\tau_{43}+\tau_{421}&\tau_{431}&\tau_{431}&\tau_{432}&0&\tau_{4321}&0\\
\tau_4&0&\tau_{431}&0&\tau_{432}&0&\tau_{4321}&0&0\\
\tau_{31}&\tau_{431}&2\tau_{431}&\tau_{432}&\tau_{432}&\tau_{4321}&0&0&0
\end{array}
\]
\caption{Portion of multiplication table for $H^*(\OGr(5,10))$}
\label{table:OGmult}
\end{table}


Given $d\geq 1$, $m\geq 4$ and $\lambda^1$, $\dots$, $\lambda^m$ satisfying
$$
|\lambda^i|\geq 1, \quad\quad  \sum |\lambda^i|=n(n-1)/2-4+2d(n-1)+m,
$$
the corresponding associativity relation reads
\begin{align}
\begin{split}
\label{eqn:assocrel}
&\sum_{d',\mu,A}
I_{d'}(\tau_{\lambda^{i_1}},\ldots,\tau_{\lambda^{i_a}},
\tau_{\lambda^{m-3}},\tau_{\lambda^{m-2}},\tau_\mu)
I_{d-d'}(\tau_{\lambda^{j_1}},\ldots,\tau_{\lambda^{j_b}},
\tau_{\lambda^{m-1}},\tau_{\lambda^m},\tau_{\mu^\vee})\\
&=\sum_{d',\mu,A}
I_{d'}(\tau_{\lambda^{i_1}},\ldots,\tau_{\lambda^{i_a}},
\tau_{\lambda^{m-3}},\tau_{\lambda^m},\tau_\mu)
I_{d-d'}(\tau_{\lambda^{j_1}},\ldots,\tau_{\lambda^{j_b}},
\tau_{\lambda^{m-2}},\tau_{\lambda^{m-1}},\tau_{\mu^\vee}),
\end{split}
\end{align}
where the first, respectively second sum is over integers $0\leq d'\leq d$,
strict partitions $\mu$ with parts less than $n$,
and subsets
$A\subset \{1,\ldots,m-4\}$
such that
\begin{equation}
\label{eqn:assocrelcond}
\sum_{i\in A\cup\{m-3,m-2\}} |\lambda^i|+|\mu|=n(n-1)/2+2d'(n-1)+a,
\end{equation}
respectively the same condition with $m-2$ replaced by $m$.
In the equations \eqref{eqn:assocrel}--\eqref{eqn:assocrelcond}
$a$, respectively $b$ denotes the cardinality of $A$, respectively
$B:=\{1,\ldots,m-4\}\smallsetminus A$,
and we write $A=\{i_1,\ldots,i_a\}$ and $B=\{j_1,\ldots,j_b\}$.

In case $d=2$ in \eqref{eqn:assocrel} we observe the following:
(i) all terms with $d'=1$, and hence $d-d'=1$, are known by the
previous section;
(ii) terms with $d'=0$ contribute
\begin{equation}
\label{eqn:lhs0}
\sum_\mu \Big(\int_{\OG}\tau_{\lambda^{m-3}}\tau_{\lambda^{m-2}}\tau_{\mu^\vee}\Big)
I_2(\tau_{\lambda^1},\ldots,\tau_{\lambda^{m-4}},
\tau_{\lambda^{m-1}},\tau_{\lambda^m},\tau_\mu)
\end{equation}
to the left-hand side and
\begin{equation}
\label{eqn:rhs0}
\sum_\mu \Big(\int_{\OG}\tau_{\lambda^{m-3}}\tau_{\lambda^m}\tau_{\mu^\vee}\Big)
I_2(\tau_{\lambda^1},\ldots,\tau_{\lambda^{m-4}},
\tau_{\lambda^{m-2}},\tau_{\lambda^{m-1}},\tau_\mu)
\end{equation}
to the right-hand side;
(iii) terms with $d'=2$ contribute
\begin{equation}
\label{eqn:lhs2}
\sum_\mu \Big(\int_{\OG}\tau_{\lambda^{m-1}}\tau_{\lambda^m}\tau_{\mu^\vee}\Big)
I_2(\tau_{\lambda^1},\ldots,\tau_{\lambda^{m-4}},
\tau_{\lambda^{m-3}},\tau_{\lambda^{m-2}},\tau_\mu)
\end{equation}
to the left-hand side and
\begin{equation}
\label{eqn:rhs2}
\sum_\mu \Big(\int_{\OG}\tau_{\lambda^{m-2}}\tau_{\lambda^{m-1}}\tau_{\mu^\vee}\Big)
I_2(\tau_{\lambda^1},\ldots,\tau_{\lambda^{m-4}},
\tau_{\lambda^{m-3}},\tau_{\lambda^m},\tau_\mu)
\end{equation}
to the right-hand side.

Now it is clear that the associativity relations determine many of the
Gromov-Witten numbers $I_2(\tau_{\lambda^1},\ldots,\tau_{\lambda^m})$.
We spell out the cases of interest, and for each case we will subsequently
take the corresponding Gromov-Witten numbers as known.
Notice that we always take $d=2$ in the following applications of \eqref{eqn:assocrel}.

\emph{Case 1.} Two point conditions:
$I_2(\ldots,\tau_{4321},\tau_{4321})$.
We apply \eqref{eqn:assocrel} with
$\lambda^{m-3}=1$, $\lambda^{m-2}=432$, $|\lambda^{m-1}|\geq 2$,
$\lambda^m=4321$.
Then \eqref{eqn:rhs0}--\eqref{eqn:rhs2} vanish,
while
\eqref{eqn:lhs0} contributes
$I_2(\tau_{\lambda^1},\ldots,\tau_{\lambda^{m-4}},\tau_{\lambda^{m-1}},
\tau_{4321},\tau_{4321})$.

\emph{Case 2.} Point and line conditions:
$I_2(\ldots,\tau_{432},\tau_{4321})$.
We apply \eqref{eqn:assocrel} with
$\lambda^{m-3}=1$, $\lambda^{m-2}=431$, $|\lambda^{m-1}|\geq 2$,
$\lambda^m=4321$.
Then \eqref{eqn:rhs0}--\eqref{eqn:lhs2} vanish,
\eqref{eqn:rhs2} either vanishes or is known by Case 1,
and \eqref{eqn:lhs0} contributes
$I_2(\tau_{\lambda^1},\ldots,\tau_{\lambda^{m-4}},\tau_{\lambda^{m-1}},
\tau_{432},\tau_{4321})$.

\emph{Case 3.} Point and plane:
$I_2(\ldots,\tau_{431},\tau_{4321})$.
We apply \eqref{eqn:assocrel} with
$\lambda^{m-3}=1$, $\lambda^{m-2}=421$, $|\lambda^{m-1}|\geq 2$,
$\lambda^m=4321$.
Then \eqref{eqn:rhs0}--\eqref{eqn:lhs2} vanish,
\eqref{eqn:rhs2} either vanishes or is known by previous cases,
and \eqref{eqn:lhs0} contributes
$I_2(\tau_{\lambda^1},\ldots,\tau_{\lambda^{m-4}},\tau_{\lambda^{m-1}},
\tau_{431},\tau_{4321})$.

\emph{Case 4.} Point and $X_{421}$:
$I_2(\ldots,\tau_{421},\tau_{4321})$.
We apply \eqref{eqn:assocrel} with
$\lambda^{m-3}=1$, $\lambda^{m-2}=321$, $|\lambda^{m-1}|\geq 2$,
$\lambda^m=4321$, and proceed as in Case 3.

\emph{Case 5.} Point and $X_{43}$:
$I_2(\ldots,\tau_{43},\tau_{4321})$.
We apply \eqref{eqn:assocrel} with
$\lambda^{m-3}=1$, $\lambda^{m-2}=42$, $|\lambda^{m-1}|\geq 2$,
$\lambda^m=4321$.
Then \eqref{eqn:rhs0}--\eqref{eqn:lhs2} vanish,
\eqref{eqn:rhs2} either vanishes or is known by previous cases,
and \eqref{eqn:lhs0} contributes
$I_2(\tau_{\lambda^1},\ldots,\tau_{\lambda^{m-4}},\tau_{\lambda^{m-1}},
\tau_{43},\tau_{4321})+
I_2(\tau_{\lambda^1},\ldots,\tau_{\lambda^{m-4}},\tau_{\lambda^{m-1}},
\tau_{421},\tau_{4321})$.

\emph{Case 6.} Point and $X_{42}$:
$I_2(\ldots,\tau_{42},\tau_{4321})$.
We apply \eqref{eqn:assocrel} with
$\lambda^{m-3}=1$, $\lambda^{m-2}=41$, $|\lambda^{m-1}|\geq 2$,
$\lambda^m=4321$, and proceed as in Case 3.

\emph{Case 7.} Point and $X_{321}$:
$I_2(\ldots,\tau_{321},\tau_{4321})$.
We apply \eqref{eqn:assocrel} with
$\lambda^{m-3}=1$, $\lambda^{m-2}=32$, $|\lambda^{m-1}|\geq 2$,
$\lambda^m=4321$, and proceed as in Case 5.

\emph{Case 8.} Point and $X_{41}$:
$I_2(\ldots,\tau_{41},\tau_{4321})$.
We apply \eqref{eqn:assocrel} with
$\lambda^{m-3}=1$, $\lambda^{m-2}=4$, $|\lambda^{m-1}|\geq 2$,
$\lambda^m=4321$, and proceed as in Case 3.

\emph{Case 9.} Point and $X_{32}$:
$I_2(\ldots,\tau_{32},\tau_{4321})$.
We apply \eqref{eqn:assocrel} with
$\lambda^{m-3}=1$, $\lambda^{m-2}=31$, $|\lambda^{m-1}|\geq 2$,
$\lambda^m=4321$, and proceed as in Case 5.

\emph{Case 10.} Two line conditions:
$I_2(\ldots,\tau_{432},\tau_{432})$.
We apply \eqref{eqn:assocrel} with
$\lambda^{m-3}=1$, $\lambda^{m-2}=431$, $|\lambda^{m-1}|\geq 2$,
$\lambda^m=432$.
Then \eqref{eqn:lhs2} vanishes,
\eqref{eqn:rhs2} vanishes or is known by Case 2,
\eqref{eqn:rhs0} contributes
$I_2(\tau_{\lambda^1},\ldots,\tau_{\lambda^{m-4}},\tau_{\lambda^{m-1}},
\tau_{431},\tau_{4321})$,
and \eqref{eqn:lhs0} contributes
$I_2(\tau_{\lambda^1},\ldots,\tau_{\lambda^{m-4}},\tau_{\lambda^{m-1}},
\tau_{432},\tau_{432})$.

\emph{Case 11.} Line and plane:
$I_2(\ldots,\tau_{431},\tau_{432})$.
We apply \eqref{eqn:assocrel} with
$\lambda^{m-3}=1$, $\lambda^{m-2}=421$, $|\lambda^{m-1}|\geq 2$,
$\lambda^m=432$, and proceed as in Case 10.

\emph{Case 12.} Line and $X_{421}$:
$I_2(\ldots,\tau_{421},\tau_{432})$.
We apply \eqref{eqn:assocrel} with
$\lambda^{m-3}=1$, $\lambda^{m-2}=321$, $|\lambda^{m-1}|\geq 2$,
$\lambda^m=432$, and proceed as in Case 10.

\emph{Case 13.} Line and $X_{43}$:
$I_2(\ldots,\tau_{43},\tau_{432})$.
We apply \eqref{eqn:assocrel} with
$\lambda^{m-3}=1$, $\lambda^{m-2}=42$, $|\lambda^{m-1}|\geq 2$,
$\lambda^m=432$.
Then \eqref{eqn:lhs2} vanishes,
\eqref{eqn:rhs2} vanishes or is known by previous cases,
\eqref{eqn:rhs0} contributes
$I_2(\tau_{\lambda^1},\ldots,\tau_{\lambda^{m-4}},\tau_{\lambda^{m-1}},
\tau_{42},\tau_{4321})$,
and \eqref{eqn:lhs0} contributes
$I_2(\tau_{\lambda^1},\ldots,\tau_{\lambda^{m-4}},\tau_{\lambda^{m-1}},
\tau_{43},\tau_{432})+
I_2(\tau_{\lambda^1},\ldots,\tau_{\lambda^{m-4}},\tau_{\lambda^{m-1}},
\tau_{421},\tau_{432})$.

\emph{Case 14.} Line and $X_{42}$:
$I_2(\ldots,\tau_{42},\tau_{432})$.
We apply \eqref{eqn:assocrel} with
$\lambda^{m-3}=1$, $\lambda^{m-2}=41$, $|\lambda^{m-1}|\geq 2$,
$\lambda^m=432$, and proceed as in Case 10.

\emph{Case 15.} Line and $X_{321}$:
$I_2(\ldots,\tau_{321},\tau_{432})$.
We apply \eqref{eqn:assocrel} with
$\lambda^{m-3}=1$, $\lambda^{m-2}=32$, $|\lambda^{m-1}|\geq 2$,
$\lambda^m=432$, and proceed as in Case 13.

\emph{Case 16.} Two plane conditions:
$I_2(\ldots,\tau_{431},\tau_{431})$.
We apply \eqref{eqn:assocrel} with
$\lambda^{m-3}=1$, $\lambda^{m-2}=421$, $|\lambda^{m-1}|\geq 2$,
$\lambda^m=431$.
Then \eqref{eqn:lhs2} vanishes or is known by Case 4,
\eqref{eqn:rhs0} is known by Case 12,
\eqref{eqn:rhs2} vanishes or is known by previous cases, and
\eqref{eqn:lhs0} contributes
$I_2(\tau_{\lambda^1},\ldots,\tau_{\lambda^{m-4}},\tau_{\lambda^{m-1}},
\tau_{431},\tau_{431})$.

\emph{Case 17.} Plane and $X_{421}$:
$I_2(\ldots,\tau_{421},\tau_{431})$.
We apply \eqref{eqn:assocrel} with
$\lambda^{m-3}=1$, $\lambda^{m-2}=321$, $|\lambda^{m-1}|\geq 2$,
$\lambda^m=431$.
Then \eqref{eqn:rhs0} is known by Case 15,
\eqref{eqn:lhs2} and \eqref{eqn:rhs2} vanish or are known by previous cases,
and \eqref{eqn:lhs0} contributes
$I_2(\tau_{\lambda^1},\ldots,\tau_{\lambda^{m-4}},\tau_{\lambda^{m-1}},
\tau_{421},\tau_{431})$.

We list a few of the conic numbers:
\begin{equation}
\label{eqn:someconicnumbers}
\begin{aligned}
I_2(\tau_2,\tau_{421},\tau_{431},\tau_{4321})&=3,&
I_2(\tau_3,\tau_{421},\tau_{431},\tau_{432})&=5,\\
I_2(\tau_{21},\tau_{421},\tau_{431},\tau_{432})&=4,&
I_2(\tau_{421},\tau_{432},\tau_{4321})&=1,\\
I_2(\tau_{431},\tau_{431},\tau_{4321})&=1,&
I_2(\tau_{431},\tau_{432},\tau_{432})&=2,
\end{aligned}
\end{equation}
In total, Cases 1 through 17 determine $1459$ conic numbers.

\begin{exam}
The number $I_2(\tau_2,\tau_{421},\tau_{431},\tau_{4321})$ falls under
Case 3.
We have $m=5$, $\lambda^2=1$, $\lambda^3=421$, $\lambda^5=4321$,
and either
$\lambda^4=2$, hence $\lambda^1=421$ with \eqref{eqn:assocrel} giving
\begin{align*}
I_2(\tau_2,&\tau_{421},\tau_{431},\tau_{4321}) +
I_1(\tau_1,\tau_4,\tau_{421},\tau_{421})I_1(\tau_2,\tau_{321},\tau_{4321})\\
& +
I_1(\tau_1,\tau_{31},\tau_{421},\tau_{421})I_1(\tau_2,\tau_{42},\tau_{4321}) \\
& = I_2(\tau_1,\tau_{421},\tau_{432},\tau_{4321})
+ I_1(\tau_1,\tau_{43},\tau_{4321})I_1(\tau_2,\tau_{21},\tau_{421},\tau_{421}) \\
& + I_1(\tau_1,\tau_{421},\tau_{4321})I_1(\tau_2,\tau_3,\tau_{421},\tau_{421})
+ I_1(\tau_1,\tau_1,\tau_{421},\tau_{4321})I_1(\tau_2,\tau_{421},\tau_{432}),
\end{align*}
or
$\lambda^4=421$, hence $\lambda^1=2$ and \eqref{eqn:assocrel} giving
\begin{align*}
I_2(\tau_2,&\tau_{421},\tau_{431},\tau_{4321}) +
I_1(\tau_1,\tau_2,\tau_{421},\tau_{432})I_1(\tau_1,\tau_{421},\tau_{4321}) \\
& = I_1(\tau_1,\tau_{43},\tau_{4321})I_1(\tau_2,\tau_{21},\tau_{421},\tau_{421})
 + I_1(\tau_1,\tau_{421},\tau_{4321})I_1(\tau_2,\tau_3,\tau_{421},\tau_{421}) \\
& + I_1(\tau_1,\tau_2,\tau_{42},\tau_{4321})I_1(\tau_{31},\tau_{421},\tau_{421})
  + I_1(\tau_1,\tau_2,\tau_{321},\tau_{4321})I_1(\tau_4,\tau_{421},\tau_{421}).
\end{align*}
Either way, we obtain $I_2(\tau_2,\tau_{421},\tau_{431},\tau_{4321})=3$.
One way requires the Case 2 number
$I_2(\tau_{421},\tau_{432},\tau_{4321})$.
The needed line numbers appear in \eqref{eqn:somelinenumbers}.
\end{exam}

\begin{table}
\addtolength{\abovecaptionskip}{-16pt}
\begin{small}
\setlength{\jot}{0pt}
\begin{align*}
I_3(\tau_2,\tau_2,\tau_2,\tau_2,\tau_{4321},\tau_{4321},\tau_{4321})&=81,&
I_3(\tau_2,\tau_2,\tau_2,\tau_3,\tau_{432},\tau_{4321},\tau_{4321})&=216,\\
I_3(\tau_2,\tau_2,\tau_2,\tau_{21},\tau_{432},\tau_{4321},\tau_{4321})&=135,&
I_3(\tau_2,\tau_2,\tau_3,\tau_{4321},\tau_{4321},\tau_{4321})&=18,\\
I_3(\tau_2,\tau_2,\tau_{21},\tau_{4321},\tau_{4321},\tau_{4321})&=9,&
I_3(\tau_2,\tau_2,\tau_4,\tau_{432},\tau_{4321},\tau_{4321})&=24,\\
I_3(\tau_2,\tau_2,\tau_{31},\tau_{432},\tau_{4321},\tau_{4321})&=42,&
I_3(\tau_2,\tau_3,\tau_3,\tau_{432},\tau_{4321},\tau_{4321})&=40,\\
I_3(\tau_2,\tau_3,\tau_{21},\tau_{432},\tau_{4321},\tau_{4321})&=26,&
I_3(\tau_2,\tau_{21},\tau_{21},\tau_{432},\tau_{4321},\tau_{4321})&=16,\\
I_3(\tau_2,\tau_4,\tau_{4321},\tau_{4321},\tau_{4321})&=3,&
I_3(\tau_2,\tau_{31},\tau_{4321},\tau_{4321},\tau_{4321})&=3,\\
I_3(\tau_2,\tau_{41},\tau_{432},\tau_{4321},\tau_{4321})&=7,&
I_3(\tau_2,\tau_{32},\tau_{432},\tau_{4321},\tau_{4321})&=6,\\
I_3(\tau_3,\tau_3,\tau_3,\tau_{431},\tau_{4321},\tau_{4321})&=52,&
I_3(\tau_3,\tau_3,\tau_{21},\tau_{431},\tau_{4321},\tau_{4321})&=36,\\
I_3(\tau_3,\tau_3,\tau_{4321},\tau_{4321},\tau_{4321})&=4,&
I_3(\tau_3,\tau_{21},\tau_{21},\tau_{431},\tau_{4321},\tau_{4321})&=25,\\
I_3(\tau_3,\tau_{21},\tau_{4321},\tau_{4321},\tau_{4321})&=2,&
I_3(\tau_3,\tau_4,\tau_{432},\tau_{4321},\tau_{4321})&=4,\\
I_3(\tau_3,\tau_{31},\tau_{432},\tau_{4321},\tau_{4321})&=8,&
I_3(\tau_3,\tau_{41},\tau_{431},\tau_{4321},\tau_{4321})&=7,\\
I_3(\tau_3,\tau_{32},\tau_{431},\tau_{4321},\tau_{4321})&=10,&
I_3(\tau_{21},\tau_{21},\tau_{21},\tau_{431},\tau_{4321},\tau_{4321})&=17,\\
I_3(\tau_{21},\tau_{21},\tau_{4321},\tau_{4321},\tau_{4321})&=1,&
I_3(\tau_{21},\tau_4,\tau_{432},\tau_{4321},\tau_{4321})&=3,\\
I_3(\tau_{21},\tau_{31},\tau_{432},\tau_{4321},\tau_{4321})&=5,&
I_3(\tau_{21},\tau_{41},\tau_{431},\tau_{4321},\tau_{4321})&=5,\\
I_3(\tau_{21},\tau_{32},\tau_{431},\tau_{4321},\tau_{4321})&=7,&
I_3(\tau_{41},\tau_{4321},\tau_{4321},\tau_{4321})&=1,\\
I_3(\tau_{32},\tau_{4321},\tau_{4321},\tau_{4321})&=0,&
I_3(\tau_{42},\tau_{432},\tau_{4321},\tau_{4321})&=2,\\
I_3(\tau_{321},\tau_{432},\tau_{4321},\tau_{4321})&=0,&
I_3(\tau_{43},\tau_{431},\tau_{4321},\tau_{4321})&=1,\\
I_3(\tau_{421},\tau_{431},\tau_{4321},\tau_{4321})&=2.
\end{align*}
\end{small}
\caption{Degree 3, Case 1 numbers}
\label{table:d3c1}
\end{table}

\begin{table}
\addtolength{\abovecaptionskip}{-16pt}
\begin{small}
\setlength{\jot}{0pt}
\begin{align*}
I_3(\tau_2,\tau_2,\tau_3,\tau_3,\tau_{432},\tau_{432},\tau_{4321})&=548,&
I_3(\tau_2,\tau_2,\tau_3,\tau_{21},\tau_{432},\tau_{432},\tau_{4321})&=379,\\
I_3(\tau_2,\tau_2,\tau_{21},\tau_{21},\tau_{432},\tau_{432},\tau_{4321})&=260,&
I_3(\tau_2,\tau_2,\tau_{41},\tau_{432},\tau_{432},\tau_{4321})&=80,\\
I_3(\tau_2,\tau_2,\tau_{32},\tau_{432},\tau_{432},\tau_{4321})&=105,&
I_3(\tau_2,\tau_3,\tau_3,\tau_3,\tau_{431},\tau_{432},\tau_{4321})&=753,\\
I_3(\tau_2,\tau_3,\tau_3,\tau_{21},\tau_{431},\tau_{432},\tau_{4321})&=531,&
I_3(\tau_2,\tau_3,\tau_{21},\tau_{21},\tau_{431},\tau_{432},\tau_{4321})&=377,\\
I_3(\tau_2,\tau_3,\tau_4,\tau_{432},\tau_{432},\tau_{4321})&=47,&
I_3(\tau_2,\tau_3,\tau_{31},\tau_{432},\tau_{432},\tau_{4321})&=109,\\
I_3(\tau_2,\tau_3,\tau_{41},\tau_{431},\tau_{432},\tau_{4321})&=96,&
I_3(\tau_2,\tau_3,\tau_{32},\tau_{431},\tau_{432},\tau_{4321})&=139,\\
I_3(\tau_2,\tau_{21},\tau_{21},\tau_{21},\tau_{431},\tau_{432},\tau_{4321})&=270,&
I_3(\tau_2,\tau_{21},\tau_4,\tau_{432},\tau_{432},\tau_{4321})&=33,\\
I_3(\tau_2,\tau_{21},\tau_{31},\tau_{432},\tau_{432},\tau_{4321})&=76,&
I_3(\tau_2,\tau_{21},\tau_{41},\tau_{431},\tau_{432},\tau_{4321})&=66,\\
I_3(\tau_2,\tau_{21},\tau_{32},\tau_{431},\tau_{432},\tau_{4321})&=103,&
I_3(\tau_2,\tau_{42},\tau_{432},\tau_{432},\tau_{4321})&=22,\\
I_3(\tau_2,\tau_{321},\tau_{432},\tau_{432},\tau_{4321})&=9,&
I_3(\tau_2,\tau_{43},\tau_{431},\tau_{432},\tau_{4321})&=19,\\
I_3(\tau_2,\tau_{421},\tau_{431},\tau_{432},\tau_{4321})&=21,&
I_3(\tau_3,\tau_3,\tau_3,\tau_{432},\tau_{432},\tau_{4321})&=92,\\
I_3(\tau_3,\tau_3,\tau_{21},\tau_{432},\tau_{432},\tau_{4321})&=64,&
I_3(\tau_3,\tau_3,\tau_4,\tau_{431},\tau_{432},\tau_{4321})&=59,\\
I_3(\tau_3,\tau_3,\tau_{31},\tau_{431},\tau_{432},\tau_{4321})&=142,&
I_3(\tau_3,\tau_{21},\tau_{21},\tau_{432},\tau_{432},\tau_{4321})&=45,\\
I_3(\tau_3,\tau_{21},\tau_4,\tau_{431},\tau_{432},\tau_{4321})&=41,&
I_3(\tau_3,\tau_{21},\tau_{31},\tau_{431},\tau_{432},\tau_{4321})&=101,\\
I_3(\tau_3,\tau_{41},\tau_{432},\tau_{432},\tau_{4321})&=13,&
I_3(\tau_3,\tau_{32},\tau_{432},\tau_{432},\tau_{4321})&=18,\\
I_3(\tau_3,\tau_{42},\tau_{431},\tau_{432},\tau_{4321})&=25,&
I_3(\tau_3,\tau_{321},\tau_{431},\tau_{432},\tau_{4321})&=11,\\
I_3(\tau_{21},\tau_{21},\tau_{21},\tau_{432},\tau_{432},\tau_{4321})&=31,&
I_3(\tau_{21},\tau_{21},\tau_4,\tau_{431},\tau_{432},\tau_{4321})&=28,\\
I_3(\tau_{21},\tau_{21},\tau_{31},\tau_{431},\tau_{432},\tau_{4321})&=73,&
I_3(\tau_{21},\tau_{41},\tau_{432},\tau_{432},\tau_{4321})&=9,\\
I_3(\tau_{21},\tau_{32},\tau_{432},\tau_{432},\tau_{4321})&=13,&
I_3(\tau_{21},\tau_{42},\tau_{431},\tau_{432},\tau_{4321})&=17,\\
I_3(\tau_{21},\tau_{321},\tau_{431},\tau_{432},\tau_{4321})&=10,&
I_3(\tau_4,\tau_4,\tau_{432},\tau_{432},\tau_{4321})&=4,\\
I_3(\tau_4,\tau_{31},\tau_{432},\tau_{432},\tau_{4321})&=9,&
I_3(\tau_4,\tau_{41},\tau_{431},\tau_{432},\tau_{4321})&=8,\\
I_3(\tau_4,\tau_{32},\tau_{431},\tau_{432},\tau_{4321})&=9,&
I_3(\tau_{31},\tau_{31},\tau_{432},\tau_{432},\tau_{4321})&=22,\\
I_3(\tau_{31},\tau_{41},\tau_{431},\tau_{432},\tau_{4321})&=17,&
I_3(\tau_{31},\tau_{32},\tau_{431},\tau_{432},\tau_{4321})&=27,\\
I_3(\tau_{43},\tau_{432},\tau_{432},\tau_{4321})&=3,&
I_3(\tau_{421},\tau_{432},\tau_{432},\tau_{4321})&=3,\\
I_3(\tau_{431},\tau_{431},\tau_{432},\tau_{4321})&=5.
\end{align*}
\end{small}
\caption{Degree 3, Case 2 numbers}
\label{table:d3c2}
\end{table}

\begin{table}
\addtolength{\abovecaptionskip}{-16pt}
\begin{small}
\setlength{\jot}{0pt}
\begin{align*}
I_3(\tau_3,\tau_3,\tau_3,\tau_3,\tau_{431},\tau_{431},\tau_{4321})&=1062,&
I_3(\tau_3,\tau_3,\tau_3,\tau_{21},\tau_{431},\tau_{431},\tau_{4321})&=750,\\
I_3(\tau_3,\tau_3,\tau_{21},\tau_{21},\tau_{431},\tau_{431},\tau_{4321})&=534,&
I_3(\tau_3,\tau_3,\tau_{41},\tau_{431},\tau_{431},\tau_{4321})&=120,\\
I_3(\tau_3,\tau_3,\tau_{32},\tau_{431},\tau_{431},\tau_{4321})&=174,&
I_3(\tau_3,\tau_{21},\tau_{21},\tau_{21},\tau_{431},\tau_{431},\tau_{4321})&=385,\\
I_3(\tau_3,\tau_{21},\tau_{41},\tau_{431},\tau_{431},\tau_{4321})&=83,&
I_3(\tau_3,\tau_{21},\tau_{32},\tau_{431},\tau_{431},\tau_{4321})&=128,\\
I_3(\tau_3,\tau_{43},\tau_{431},\tau_{431},\tau_{4321})&=20,&
I_3(\tau_3,\tau_{421},\tau_{431},\tau_{431},\tau_{4321})&=21,\\
I_3(\tau_{21},\tau_{21},\tau_{21},\tau_{21},\tau_{431},\tau_{431},\tau_{4321})&=282,&
I_3(\tau_{21},\tau_{21},\tau_{41},\tau_{431},\tau_{431},\tau_{4321})&=58,\\
I_3(\tau_{21},\tau_{21},\tau_{32},\tau_{431},\tau_{431},\tau_{4321})&=95,&
I_3(\tau_{21},\tau_{43},\tau_{431},\tau_{431},\tau_{4321})&=13,\\
I_3(\tau_{21},\tau_{421},\tau_{431},\tau_{431},\tau_{4321})&=16,&
I_3(\tau_{41},\tau_{41},\tau_{431},\tau_{431},\tau_{4321})&=12,\\
I_3(\tau_{41},\tau_{32},\tau_{431},\tau_{431},\tau_{4321})&=17,&
I_3(\tau_{32},\tau_{32},\tau_{431},\tau_{431},\tau_{4321})&=28.
\end{align*}
\end{small}
\caption{Degree 3, Case 3 numbers}
\label{table:d3c3}
\end{table}

\begin{table}
\addtolength{\abovecaptionskip}{-16pt}
\begin{small}
\setlength{\jot}{0pt}
\begin{align*}
I_3(\tau_2,\tau_3,\tau_3,\tau_3,\tau_{432},\tau_{432},\tau_{432})&=1416,&
I_3(\tau_2,\tau_3,\tau_3,\tau_{21},\tau_{432},\tau_{432},\tau_{432})&=996,\\
I_3(\tau_2,\tau_3,\tau_{21},\tau_{21},\tau_{432},\tau_{432},\tau_{432})&=708,&
I_3(\tau_2,\tau_3,\tau_{41},\tau_{432},\tau_{432},\tau_{432})&=189,\\
I_3(\tau_2,\tau_3,\tau_{32},\tau_{432},\tau_{432},\tau_{432})&=270,&
I_3(\tau_2,\tau_{21},\tau_{21},\tau_{21},\tau_{432},\tau_{432},\tau_{432})&=510,\\
I_3(\tau_2,\tau_{21},\tau_{41},\tau_{432},\tau_{432},\tau_{432})&=129,&
I_3(\tau_2,\tau_{21},\tau_{32},\tau_{432},\tau_{432},\tau_{432})&=201,\\
I_3(\tau_2,\tau_{43},\tau_{432},\tau_{432},\tau_{432})&=42,&
I_3(\tau_2,\tau_{421},\tau_{432},\tau_{432},\tau_{432})&=42,\\
I_3(\tau_3,\tau_3,\tau_3,\tau_3,\tau_{431},\tau_{432},\tau_{432})&=1940,&
I_3(\tau_3,\tau_3,\tau_3,\tau_{21},\tau_{431},\tau_{432},\tau_{432})&=1362,\\
I_3(\tau_3,\tau_3,\tau_{21},\tau_{21},\tau_{431},\tau_{432},\tau_{432})&=966,&
I_3(\tau_3,\tau_3,\tau_4,\tau_{432},\tau_{432},\tau_{432})&=112,\\
I_3(\tau_3,\tau_3,\tau_{31},\tau_{432},\tau_{432},\tau_{432})&=268,&
I_3(\tau_3,\tau_3,\tau_{41},\tau_{431},\tau_{432},\tau_{432})&=228,\\
I_3(\tau_3,\tau_3,\tau_{32},\tau_{431},\tau_{432},\tau_{432})&=320,&
I_3(\tau_3,\tau_{21},\tau_{21},\tau_{21},\tau_{431},\tau_{432},\tau_{432})&=696,\\
I_3(\tau_3,\tau_{21},\tau_4,\tau_{432},\tau_{432},\tau_{432})&=77,&
I_3(\tau_3,\tau_{21},\tau_{31},\tau_{432},\tau_{432},\tau_{432})&=191,\\
I_3(\tau_3,\tau_{21},\tau_{41},\tau_{431},\tau_{432},\tau_{432})&=156,&
I_3(\tau_3,\tau_{21},\tau_{32},\tau_{431},\tau_{432},\tau_{432})&=236,\\
I_3(\tau_3,\tau_{42},\tau_{432},\tau_{432},\tau_{432})&=50,&
I_3(\tau_3,\tau_{321},\tau_{432},\tau_{432},\tau_{432})&=22,\\
I_3(\tau_3,\tau_{43},\tau_{431},\tau_{432},\tau_{432})&=42,&
I_3(\tau_3,\tau_{421},\tau_{431},\tau_{432},\tau_{432})&=39,\\
I_3(\tau_{21},\tau_{21},\tau_{21},\tau_{21},\tau_{431},\tau_{432},\tau_{432})&=512,&
I_3(\tau_{21},\tau_{21},\tau_4,\tau_{432},\tau_{432},\tau_{432})&=52,\\
I_3(\tau_{21},\tau_{21},\tau_{31},\tau_{432},\tau_{432},\tau_{432})&=139,&
I_3(\tau_{21},\tau_{21},\tau_{41},\tau_{431},\tau_{432},\tau_{432})&=108,\\
I_3(\tau_{21},\tau_{21},\tau_{32},\tau_{431},\tau_{432},\tau_{432})&=176,&
I_3(\tau_{21},\tau_{42},\tau_{432},\tau_{432},\tau_{432})&=34,\\
I_3(\tau_{21},\tau_{321},\tau_{432},\tau_{432},\tau_{432})&=20,&
I_3(\tau_{21},\tau_{43},\tau_{431},\tau_{432},\tau_{432})&=27,\\
I_3(\tau_{21},\tau_{421},\tau_{431},\tau_{432},\tau_{432})&=30,&
I_3(\tau_4,\tau_{41},\tau_{432},\tau_{432},\tau_{432})&=16,\\
I_3(\tau_4,\tau_{32},\tau_{432},\tau_{432},\tau_{432})&=18,&
I_3(\tau_{31},\tau_{41},\tau_{432},\tau_{432},\tau_{432})&=34,\\
I_3(\tau_{31},\tau_{32},\tau_{432},\tau_{432},\tau_{432})&=54,&
I_3(\tau_{41},\tau_{41},\tau_{431},\tau_{432},\tau_{432})&=24,\\
I_3(\tau_{41},\tau_{32},\tau_{431},\tau_{432},\tau_{432})&=33,&
I_3(\tau_{32},\tau_{32},\tau_{431},\tau_{432},\tau_{432})&=54,\\
I_3(\tau_{431},\tau_{432},\tau_{432},\tau_{432})&=11.
\end{align*}
\end{small}
\caption{Degree 3, Case 10 numbers}
\label{table:d3c10}
\end{table}

\begin{table}
\addtolength{\abovecaptionskip}{-16pt}
\begin{small}
\setlength{\jot}{0pt}
\begin{align*}
I_4(\tau_2,\tau_2,\tau_2,\tau_{4321},\tau_{4321},\tau_{4321},\tau_{4321})&=64,&
I_4(\tau_2,\tau_2,\tau_3,\tau_{432},\tau_{4321},\tau_{4321},\tau_{4321})&=208,\\
I_4(\tau_2,\tau_2,\tau_{21},\tau_{432},\tau_{4321},\tau_{4321},\tau_{4321})&=160,&
I_4(\tau_2,\tau_3,\tau_3,\tau_{432},\tau_{432},\tau_{4321},\tau_{4321})&=576,\\
I_4(\tau_2,\tau_3,\tau_{21},\tau_{432},\tau_{432},\tau_{4321},\tau_{4321})&=420,&
I_4(\tau_2,\tau_3,\tau_{4321},\tau_{4321},\tau_{4321},\tau_{4321})&=8,\\
I_4(\tau_2,\tau_{21},\tau_{21},\tau_{432},\tau_{432},\tau_{4321},\tau_{4321})&=304,&
I_4(\tau_2,\tau_{21},\tau_{4321},\tau_{4321},\tau_{4321},\tau_{4321})&=8,\\
I_4(\tau_2,\tau_4,\tau_{432},\tau_{4321},\tau_{4321},\tau_{4321})&=12,&
I_4(\tau_2,\tau_{31},\tau_{432},\tau_{4321},\tau_{4321},\tau_{4321})&=38,\\
I_4(\tau_2,\tau_{41},\tau_{432},\tau_{432},\tau_{4321},\tau_{4321})&=62,&
I_4(\tau_2,\tau_{32},\tau_{432},\tau_{432},\tau_{4321},\tau_{4321})&=100,\\
I_4(\tau_3,\tau_3,\tau_{432},\tau_{4321},\tau_{4321},\tau_{4321})&=28,&
I_4(\tau_3,\tau_{21},\tau_{432},\tau_{4321},\tau_{4321},\tau_{4321})&=22,\\
I_4(\tau_3,\tau_4,\tau_{432},\tau_{432},\tau_{4321},\tau_{4321})&=36,&
I_4(\tau_3,\tau_{31},\tau_{432},\tau_{432},\tau_{4321},\tau_{4321})&=94,\\
I_4(\tau_{21},\tau_{21},\tau_{432},\tau_{4321},\tau_{4321},\tau_{4321})&=16,&
I_4(\tau_{21},\tau_4,\tau_{432},\tau_{432},\tau_{4321},\tau_{4321})&=26,\\
I_4(\tau_{21},\tau_{31},\tau_{432},\tau_{432},\tau_{4321},\tau_{4321})&=68,&
I_4(\tau_4,\tau_{4321},\tau_{4321},\tau_{4321},\tau_{4321})&=0,\\
I_4(\tau_{31},\tau_{4321},\tau_{4321},\tau_{4321},\tau_{4321})&=2,&
I_4(\tau_{41},\tau_{432},\tau_{4321},\tau_{4321},\tau_{4321})&=3,\\
I_4(\tau_{32},\tau_{432},\tau_{4321},\tau_{4321},\tau_{4321})&=6,&
I_4(\tau_{42},\tau_{432},\tau_{432},\tau_{4321},\tau_{4321})&=14,\\
I_4(\tau_{321},\tau_{432},\tau_{432},\tau_{4321},\tau_{4321})&=8.
\end{align*}
\end{small}
\caption{Degree 4, Case 1 numbers}
\label{table:d4c1}
\end{table}

\begin{table}
\addtolength{\abovecaptionskip}{-16pt}
\begin{small}
\setlength{\jot}{0pt}
\begin{align*}
I_4(\tau_3,\tau_3,\tau_3,\tau_{432},\tau_{432},\tau_{432},\tau_{4321})&=1488,&
I_4(\tau_3,\tau_3,\tau_{21},\tau_{432},\tau_{432},\tau_{432},\tau_{4321})&=1062,\\
I_4(\tau_3,\tau_{21},\tau_{21},\tau_{432},\tau_{432},\tau_{432},\tau_{4321})&=764,&
I_4(\tau_3,\tau_{41},\tau_{432},\tau_{432},\tau_{432},\tau_{4321})&=154,\\
I_4(\tau_3,\tau_{32},\tau_{432},\tau_{432},\tau_{432},\tau_{4321})&=232,&
I_4(\tau_{21},\tau_{21},\tau_{21},\tau_{432},\tau_{432},\tau_{432},\tau_{4321})&=552,\\
I_4(\tau_{21},\tau_{41},\tau_{432},\tau_{432},\tau_{432},\tau_{4321})&=108,&
I_4(\tau_{21},\tau_{32},\tau_{432},\tau_{432},\tau_{432},\tau_{4321})&=170,\\
I_4(\tau_{43},\tau_{432},\tau_{432},\tau_{432},\tau_{4321})&=26,&
I_4(\tau_{421},\tau_{432},\tau_{432},\tau_{432},\tau_{4321})&=29.
\end{align*}
\end{small}
\caption{Degree 4, Case 2 numbers}
\label{table:d4c2}
\end{table}

\begin{table}
\addtolength{\abovecaptionskip}{-16pt}
\begin{small}
\setlength{\jot}{0pt}
\begin{align*}
I_5(\tau_2,\tau_2,\tau_{4321},\ldots,\tau_{4321})&=125,&
I_5(\tau_2,\tau_3,\tau_{432},\tau_{4321},\ldots,\tau_{4321})&=250,\\
I_5(\tau_2,\tau_{21},\tau_{432},\tau_{4321},\ldots,\tau_{4321})&=175,&
I_5(\tau_3,\tau_3,\tau_{432},\tau_{432},\tau_{4321},\ldots,\tau_{4321})&=566,\\
I_5(\tau_3,\tau_{21},\tau_{432},\tau_{432},\tau_{4321},\ldots,\tau_{4321})&=403,&
I_5(\tau_3,\tau_{4321},\ldots,\tau_{4321})&=15,\\
I_5(\tau_{21},\tau_{21},\tau_{432},\tau_{432},\tau_{4321},\ldots,\tau_{4321})&=288,&
I_5(\tau_{21},\tau_{4321},\ldots,\tau_{4321})&=10,\\
I_5(\tau_4,\tau_{432},\tau_{4321},\ldots,\tau_{4321})&=14,&
I_5(\tau_{31},\tau_{432},\tau_{4321},\ldots,\tau_{4321})&=33,\\
I_5(\tau_{41},\tau_{432},\tau_{432},\tau_{4321},\ldots,\tau_{4321})&=50,&
I_5(\tau_{32},\tau_{432},\tau_{432},\tau_{4321},\ldots,\tau_{4321})&=75,\\
I_6(\tau_2,\tau_{4321},\ldots,\tau_{4321})&=60,&
I_6(\tau_3,\tau_{432},\tau_{4321},\ldots,\tau_{4321})&=180,\\
I_6(\tau_{21},\tau_{432},\tau_{4321},\ldots,\tau_{4321})&=130,&
I_7(\tau_{4321},\ldots,\tau_{4321})&=71.
\end{align*}
\end{small}
\caption{Degree 5, 6, and 7 numbers (all Case 1)}
\label{table:d567}
\end{table}

\section*{Higher degree numbers}
The associativity relations also determine many higher-degree
Gromov-Witten numbers.
For instance, taking $d=3$ we may apply
\eqref{eqn:assocrel} with
$\lambda^{m-3}=1$, $\lambda^{m-2}=432$, $|\lambda^{m-1}|\geq 2$,
$\lambda^m=4321$ just as in Case 1 above, and obtain many
Gromov-Witten numbers $I_3(\ldots,\tau_{4321},\tau_{4321})$.
However, since we obtained only \emph{some} of the degree $2$
Gromov-Witten numbers in the previous section, we need to check that
the contributions with $d'=2$ or $d-d'=2$ involve only
degree $2$ Gromov-Witten numbers that have been determined.
This is checked on a case-by-case basis for each of the $35$ numbers listed in
Table \ref{table:d3c1} and each
corresponding application of \eqref{eqn:assocrel}.

\begin{exam}
To determine
$I_3(\tau_{421},\tau_{431},\tau_{4321},\tau_{4321})$ we read off from
\eqref{eqn:assocrel} with $d=3$, $m=5$, and
$(\lambda^1,\ldots,\lambda^5)=(431, 1, 432, 421, 4321)$, the identity
(cf.\ \eqref{eqn:somelinenumbers}, \eqref{eqn:someconicnumbers}):
\begin{align*}
&I_3(\tau_{421},\tau_{431},\tau_{4321},\tau_{4321}) \\
&\qquad
= I_1(\tau_1,\tau_{43},\tau_{4321})I_2(\tau_{21},\tau_{421},\tau_{431},\tau_{432})
+ I_1(\tau_1,\tau_{421},\tau_{4321})I_2(\tau_3,\tau_{421},\tau_{431},\tau_{432}) \\
&\qquad
+ I_2(\tau_1,\tau_{431},\tau_{431},\tau_{4321})I_1(\tau_2,\tau_{421},\tau_{432})
- I_1(\tau_1,\tau_{431},\tau_{432})I_2(\tau_2,\tau_{421},\tau_{431},\tau_{4321}) \\
&\qquad
- I_1(\tau_1,\tau_1,\tau_{431},\tau_{432})I_2(\tau_{421},\tau_{432},\tau_{4321})
- I_2(\tau_1,\tau_{431},\tau_{432},\tau_{432})I_1(\tau_1,\tau_{421},\tau_{4321}) \\
&\qquad
= 1\cdot 4 + 0\cdot 5 + 2\cdot 1 - 1\cdot 3 - 1\cdot 1 - 4\cdot 0=2.
\end{align*}
Alternatively $(\lambda^1,\ldots,\lambda^5)=(421, 1, 432, 431, 4321)$ yields
$I_3(\tau_{421},\tau_{431},\tau_{4321},\tau_{4321})=
1\cdot 4 + 0\cdot 5 + 0\cdot 2 + 2\cdot 1 - 1\cdot 3 - 1\cdot 1=2$.
\end{exam}

Reasoning as in Case 2 we obtain the Gromov-Witten numbers
$I_3(\ldots,\tau_{432},\tau_{4321})$ listed in
Table \ref{table:d3c2}.
Again it must be checked that each application of
\eqref{eqn:assocrel} requires only known conic numbers.

Similarly we reason as in Case 3 above to obtain the
$I_3(\ldots,\tau_{431},\tau_{4321})$ listed in
Table \ref{table:d3c3}.
We conclude our determination of $d=3$ numbers with the
$I_3(\ldots,\tau_{432},\tau_{432})$ listed in
Table \ref{table:d3c10}, for which the reasoning is as in Case 10.

An application of \eqref{eqn:assocrel} with
$d=4$ requires numbers of degrees $1$, $2$, and $3$.
It must be verified on a case-by-case basis that the required conic
and cubic numbers are among those already determined.
Tables \ref{table:d4c1} and \ref{table:d4c2} list the numbers
$I_4(\ldots,\tau_{4321},\tau_{4321})$, respectively
$I_4(\ldots,\tau_{432},\tau_{4321})$, which are treated by
reasoning as in Case 1 and Case 2, respectively.
For $d=5$, $6$, and $7$ we require only numbers with at least two
point conditions, hence we use the reasoning of Case 1.
Again it must be verified on a case-by-case basis that the required
numbers of every smaller degree are among those already determined.
The numbers are displayed in Table \ref{table:d567}.
The final number displayed is the desired
\[I_7(\tau_{4321},\tau_{4321},\tau_{4321},\tau_{4321},\tau_{4321},
\tau_{4321},\tau_{4321})=71,
\]
with the following enumerative interpretation.

\begin{propo}
\label{prop:71}
There are $71$ rational curves of degree $7$ through $7$ general points on
$\OGr(5,10)$.
\end{propo}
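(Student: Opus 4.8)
The plan is to identify the number of rational septics through seven general points with the genus-zero Gromov--Witten invariant $I_7(\tau_{4321},\ldots,\tau_{4321})$ and then to compute that invariant, obtaining $71$, by the quantum-cohomology bootstrap assembled in this Appendix. The first step is the enumerative translation. Since $\OGr(5,10)$ is a homogeneous space $G/P$ it is convex, so $\overline{M}_{0,m}(\OGr(5,10),d)$ is irreducible and smooth of the expected dimension and the evaluation maps behave well; moreover the class $\tau_{4321}$ has codimension $|4321|=10=\dim\OGr(5,10)$, i.e.\ it is the point class. By the principle recalled at the start of the Appendix (following \cite{FP,lopez}), for incidence conditions imposed by Schubert varieties of codimension $\geq 2$ in general position the Gromov--Witten invariant literally counts maps; applied to seven general translates of $X_{4321}$, and using $7\cdot 10=70=\dim\overline{M}_{0,7}(\OGr(5,10),7)$, this says the number of degree-$7$ rational curves through the seven points is exactly $I_7(\tau_{4321},\ldots,\tau_{4321})$. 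One should also note, again by convexity and a dimension count, that for general points the contributing stable maps are birational onto smooth rational septics, with no boundary or multiple-cover contributions, so this count is honest.

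Next I would compute the invariant by induction on the degree. The base case $d=1$ is the content of the displayed divided-difference formula above: using the Billey--Haiman description of the space of lines on $\OGr(5,10)$ as $\OGr(3,10)$ and the operators $\partial_i,\partial_{\hat 1}$ acting on the Schur $P$-polynomials $P_\lambda(z_1,\ldots,z_5)$, one evaluates all $1071$ line numbers $I_1([X_{\lambda^1}],\ldots,[X_{\lambda^m}])$. Together with the classical triple intersection numbers $I_0$ and the multiplication table (Table~\ref{table:OGmult}), these are the inputs to the associativity relations \eqref{eqn:assocrel}.

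The inductive step is to feed these into \eqref{eqn:assocrel} with increasing $d$, each time arranging matters so that a single new number is isolated. For $d=2$ this is Cases~1--17: choosing $\lambda^{m-3}=1$, $\lambda^{m-2}$ a suitable divisor class, and $\lambda^m\in\{4321,432\}$, one checks that the $d'=0$ and $d'=2$ contributions \eqref{eqn:lhs0}--\eqref{eqn:rhs2} either vanish, reduce to products $I_0\cdot I_2$ of known factors, or were determined in an earlier case, so that \eqref{eqn:lhs0} pins down the desired $I_2$; this yields the $1459$ conic numbers needed, in particular those in \eqref{eqn:someconicnumbers}. One then repeats with $d=3$ (the analogues of Cases~1, 2, 3, 10, giving Tables~\ref{table:d3c1}--\ref{table:d3c10}), checking case by case that every term with $d'=2$ or $d-d'=2$ uses only conic numbers already on the list; then $d=4$ (Tables~\ref{table:d4c1}--\ref{table:d4c2}); and finally $d=5,6,7$, where only numbers with at least two point conditions ever arise, so the reasoning of Case~1 suffices throughout. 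The last application of \eqref{eqn:assocrel} produces $I_7(\tau_{4321},\ldots,\tau_{4321})=71$.

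The main obstacle is organizational rather than conceptual: one must exhibit a well-founded ordering of all the Gromov--Witten numbers that occur, so that in every invocation of \eqref{eqn:assocrel} all but one of the terms is classical, zero, or previously computed — equivalently, the dependency graph among the chosen WDVV instances is acyclic. This must be verified for each of the finitely many numbers in Tables~\ref{table:d3c1}--\ref{table:d567}, and it is the part of the argument that requires genuine care (and where the appendix's case-by-case checks earn their keep). A secondary point, already flagged above, is the enumerative interpretation: one wants the fibre of $\ev^7$ over a general point to be reduced of length $71$, which follows from convexity of $\OGr(5,10)$ together with genericity, as in the classical argument of \cite{FP,lopez}.
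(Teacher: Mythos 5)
Your proposal takes essentially the same route as the paper's appendix: identify the count with the Gromov--Witten invariant $I_7(\tau_{4321},\ldots,\tau_{4321})$ via the enumerativity results of \cite{FP,lopez}, compute the degree-one numbers with the divided-difference formalism of Billey--Haiman, and then bootstrap through the associativity relations \eqref{eqn:assocrel} degree by degree, with the same case-by-case verification that each instance uses only previously determined numbers, arriving at $71$. The only quibble is an attribution slip: the identification of the space of lines on $\OGr(5,10)$ with $\OGr(3,10)$ comes from \cite[Example 4.12]{LM}, not from Billey--Haiman, whose contribution is the divided-difference description of the Schubert calculus.
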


\begin{remar}
Semi-simplicity allows us to reconstruct the full quantum cohomology even 
without assuming that the ordinary cohomology is generated by 
$H^2$, see \cite{BM04} and \cite{Mas}.  (The case where the cohomology
is generated by $H^2$ is addressed in \cite{KM}.)  
This property was verified for orthogonal Grassmannians in \cite{CMP10}.
\end{remar}

\end{document}